\newcolumntype{C}[1]{>{\centering\arraybackslash}p{#1}}
\newtheorem{theorem}{Theorem}[section]
\newtheorem{proposition}[theorem]{Proposition}
\theoremstyle{definition}
\newtheorem{definition}[theorem]{Definition}
\theoremstyle{definition}
\newtheorem{defin}{Definition}
\newtheorem{example}{Example}
\begin{document}
		
	\title{Persistent Dirac of Path and Hypergraph}
	\author{Faisal Suwayyid$^{{\href{mailto:faisal.suwayyid@kfupm.edu.sa}{\textrm{\Letter}}}1,2}$
		and Guo-Wei Wei$^{{\href{mailto:weig@msu.edu}{\textrm{\Letter}}}2,3,4}$ \\
		$^1$Department of Mathematics,\\
		King Fahd University of Petroleum and Minerals, Dhahran 31261, KSA.\\
		$^2$Department of Mathematics,\\
		Michigan State University, MI 48824, USA.\\
		$^3$Department of Electrical and Computer Engineering,\\
		Michigan State University, MI 48824, USA.\\
		$^4$Department of Biochemistry and Molecular Biology,\\
		Michigan State University, MI 48824, USA.
	}
	
	\date{\today} 
	
	\maketitle

		\begin{abstract}
			This work introduces the development of path Dirac and hypergraph Dirac operators, along with an exploration of their persistence. These operators excel in distinguishing between harmonic and non-harmonic spectra, offering valuable insights into the subcomplexes within these structures. The paper showcases the functionality of these operators through a series of examples in various contexts. An essential facet of this research involves examining the operators' sensitivity to filtration, emphasizing their capacity to adapt to topological changes.
			The paper also explores a significant application of persistent path Dirac and persistent hypergraph Dirac in molecular science, specifically in analyzing molecular structures. The study introduces strict preorders derived from molecular structures, which generate graphs and digraphs with intricate path structures. The depth of information within these path complexes reflects the complexity of different preorder classes influenced by molecular structures. This characteristic underscores the effectiveness of these tools in the realm of topological data analysis.
		\end{abstract}
	
		Keywords: Persistent  Hypergraph   Dirac, Persistent  Digraph   Dirac, topological data analysis, spectral data analysis, simultaneous geometric and topological analyses.
		%
		{\setcounter{tocdepth}{4} \tableofcontents}
		\newpage
		
		\setcounter{page}{1}
		\renewcommand{\thepage}{{\arabic{page}}}
		

			\section{Introduction}
			
			Exploring topological features is a significant aspect of various disciplines, including mathematics, statistics, and data science. This involves examining data’s inherent shapes and patterns to discover hidden attributes. Topology has long been integrated into mathematics, physics, biology, and engineering, as highlighted in the book by Kaczynski et al . (2004) \cite{kaczynski2004computational}. However, its recent applications in revealing data properties and structures have become increasingly notable \cite{cang2017topologynet,grbic2022aspects,chen2023persistent}.
			The growing interest in this area has spurred the development of numerous topological tools specifically designed for data analysis. Prominent among these is persistent homology, a key element of topological data analysis, elaborated in \cite{zomorodian2004computing, edelsbrunner2008persistent, bubenik2017persistence}. This method generates a series of topological spaces from a dataset through a process known as filtration, aiming to capture the dataset’s topological characteristics across various spatial scales. However, persistent homology cannot capture non-topological changes in shape or geometry. This drawback can be addressed with topological Laplacians. One of the oldest topological Laplacians is the Hodge Laplacian or de Rham-Hodge theory from differential geometry. This theory employs differential forms and the Hodge Laplacian boundary map to characterize the cohomology of a closed, oriented Riemannian manifold \cite{dodziuk1977rham}. However, this technique does not incorporate multiscale analysis, which limits its applicability in data science. In 2019, the evolutionary de Rham-Hodge theory was developed to incorporate multiscale analysis into the traditional de Rham-Hodge theory \cite{chen2021evolutionary}, creating persistent Hodge Laplacians. However, applying this advanced theory to Riemannian manifolds poses a significant computational challenge. To overcome this challenge, the persistent spectral graph (PSG) concept was introduced in the same year by integrating filtration techniques with combinatorial Laplacians \cite{wang2020persistent}. PSG, also referred to as the persistent combinatorial Laplacian  \cite{wang2020persistent} or persistent Laplacian (PL) \cite{memoli2022persistent}, represents an expansion of persistent homology into the non-harmonic spectral analysis. This development has proven highly advantageous in topological data analysis with tens of large datasets \cite{qiu2023persistent,chen2022persistent}.
			The PSG approach involves converting a point cloud into a sequence of simplicial complexes created through filtration. Its harmonic spectra align with the topological persistence observed in persistent homology. In contrast, the non-harmonic spectra are instrumental in capturing the homotopic shape evolution of the data throughout the filtration process. This dual capacity of PSG  to reflect both harmonic and non-harmonic elements enhances its utility in understanding and analyzing topological data, as discussed in \cite{meng2021persistent, chen2022persistent,qiu2023persistent}. A software package has been developed for persistent Laplacian \cite{wang2021hermes}.  
			PSG approach, akin to persistent homology, treats all data points equally. However, to better handle point cloud data with labeled information and facilitate multiscale analysis, Persistent Sheaf Laplacian (PSL) was introduced, as noted in \cite{wei2021persistent}. This method extends the framework of cellular sheaves, a concept detailed in earlier works in \cite{shepard1985cellular, hansen2019toward}, and can embed non-geometric information into the topological invariants and spectral representations. 
			Despite their advancements, the methods described so far have a common limitation: they are not sensitive to asymmetry or directed relationships in data. This shortcoming restricts their capability to encode structures that contain directed information effectively. To address this issue, a novel approach was developed that applies filtration to path complexes derived from directed graphs   \cite{grigor2012homologies}. This approach, encompassing the path Laplacian and persistent path Laplacian, as proposed in \cite{wang2022persistent}, serves as a topological Laplacian tailored for analyzing the spectral geometry and topology of data.
			Additionally, to tackle a similar limitation, the persistent hyperdigraph Laplacian was introduced. This approach differs in its use of hyperdigraph homology and persistent hyperdigraph homology, as discussed in \cite{chen2023persistent}. Unlike the previous methods, this approach focuses on capturing more intricate directional information inherent in the data, thereby offering a more nuanced understanding the topological and geometrical aspects of data. 
			
			In topological data analysis, recent advancements have introduced a new category of fundamental operators known as Dirac operators, which are essentially formal square roots of Laplacian operators, as detailed in \cite{ameneyro2022quantums}. These Dirac operators are unique because they encapsulate the same information as Laplacian operators and provide additional insights about subchain complexes. One of their fundamental properties is the ability to represent the homologies of their complexes as distinct subspaces within their kernels, enhancing their capability to detect subtle topological changes during various stages of data filtration.
			A quantum computing method and algorithm have been developed to utilize persistent Dirac operators for computing persistent Betti numbers. These numbers capture key topological features within data at multiple scales \cite{ameneyro2022quantums}. Additionally, two innovative algorithms have been introduced to address the challenge of exponential increases in computation time and memory demand with larger datasets, particularly in time series data \cite{ameneyro2022quantum}. These include a quantum Takens’s delay embedding technique that transforms time series data into point clouds in higher-dimensional spaces.
			Dirac operators also play a pivotal role in analyzing topological signals. They are instrumental in describing locally interconnected topological signals and uncovering unique properties such as explosive behavior and hysteresis loops in Dirac synchronization on fully connected networks, as discussed in \cite{calmon2022dirac}. Moreover, these operators are utilized in developing Dirac signal processing techniques, effectively filtering noisy topological signals defined on various elements of simplicial complexes, like nodes, links, and triangles, as outlined in \cite{calmon2023dirac}.
			Finally, a computational framework has been developed for molecular representation using the concept of the persistent Dirac operator, as presented in \cite{wee2023persistent}. This framework systematically investigates the properties of the spectrum of discrete Dirac matrices. These properties are leveraged to understand the geometric and topological characteristics of eigenvectors, both in terms of homology and non-homology, associated with actual molecular structures. The framework also examines how different weighting schemes affect the information encoded in Dirac eigenspectra, providing deeper insights into molecular structures.
			
			The present study presents a new series of operators, named path Dirac operators and hypergraph Dirac operators, along with their persistence attributes, aimed at extracting valuable information from path complexes and hypergraphs. These operators can identify both harmonic and non-harmonic spectra, thus offering a comprehensive view of the underlying substructures within path complexes. Their functionality and behavior are showcased through various examples in different contexts. A significant aspect of the work is the examination of the operators’ sensitivity to filtration processes, which underscores their ability to adapt to changes in topological structures.
			Furthermore, the research extends to applying persistent path Dirac operators and persistent hypergraph Dirac in the biological domain, particularly in studying molecular structures. It introduces a concept of naturally induced strict preorders derived from molecular structural properties. These preorders lead to the formation of complex graphs and digraphs enriched with intricate path complexes. The depth of information contained within these path complexes reflects the diversity and complexity inherent in the different classes of the preorders. As a result, these operators and complexes represent significant advancements in topological data analysis, particularly in their ability to provide nuanced insights into molecular and biological data.

			\section{Preliminaries}
			
			
			\subsection{Modules, chain complexes and homology}
			
			Chain complexes are fundamental in mathematics and crucial tools in various mathematical fields. They facilitate the connection of algebraic structures to mathematical objects, like simplicial complexes in algebraic topology. This connection is instrumental in extracting pertinent information to differentiate between distinct mathematical entities. In this section, we provide a collection of definitions and foundational concepts pertaining to chain complexes.
			Firstly, a ring is an abelian group with a distributive multiplication operation over addition. In this article, we denote $R$ as a ring, explicitly focusing on commutative rings with unity. Examples of such rings are the ring of integers, represented as $\mathbb{Z}$, and the ring of fractions. Rings in which every non-zero element has an inverse are known as fields, with the sets of real and complex numbers being classic examples.
			A module $M$ over the ring $R$ is an abelian group equipped with a multiplication operation involving elements from the ring $R$. This multiplication conforms to the distributive law with addition within the module $M$. Typical examples of modules are vector spaces, specifically vector spaces over real and complex numbers.
			With these definitions in place, the article will formally define chain complexes over commutative rings, delving deeper into these algebraic entities’ structural and functional aspects.
			
			
			\begin{definition}[Chain Complex]  \label{def:1}
				A chain complex of $R$-modules $(C_\bullet, d_\bullet)$ is a sequence of $R$-modules equipped with $R$-linear maps $d_n:C_n\longrightarrow C_{n-1}$ such that $d_{n}d_{n+1} = 0$ for every $n\in \mathbb{Z}$. The $n$-th homology of $(C_\bullet, d_\bullet)$ is defined to be $\text{ker}(d_n)/\text{Im}(d_{n+1})$, and is denoted by $H_n(C_\bullet)$, or simply $H_n$ if there is no ambiguity.
			\end{definition}
			
			
			The elements of $\text{ker}(d_n)$ are called cycles, and the elements of $\text{Im}(d_{n+1})$ are called boundaries. A morphism $\phi: (A_\bullet, d^A_\bullet) \longrightarrow (B_\bullet, d^B_\bullet)$ of chain complexes is a sequence of $R$-linear maps $\phi_n: A_n\longrightarrow B_n$ such that the following diagram
			\begin{equation*}
				\xymatrix{
					A_n \ar[r]^{d^A_n} \ar[d]_{\phi_n} & A_{n-1} \ar[d]^{\phi_{n-1}}\\
					B_n \ar[r]^{d^B_n} & B_{n-1}
				}
			\end{equation*}
			commutes for all $n\in \mathbb{Z}$. A subcomplex $(A_\bullet, d_\bullet|_{A_\bullet})$ of a chain complex $(C_\bullet, d_\bullet)$, denoted by $A_\bullet \subseteq C_\bullet$,  is a sequence of modules $\{A_n\}$ such that $A_n$ is a submodule of $C_n$ and that $d_n(A_n)\subseteq A_{n-1}$ for every $n\in \mathbb{Z}$. 
			
			Several techniques are available for generating chain complexes based on existing ones. In the subsequent definition, we provide procedures for creating subcomplexes derived from a particular complex as defined in \cite{bressan2016embedded}. We will frequently employ these constructions throughout this work.
			
			
			\begin{definition}[Infimum and Supremum Chain Complexes]  \label{def:2}
				Given a chain complex $(C_\bullet, d_\bullet)$, and a sequence of submodules $\{A_n|A_n\subseteq C_n\}$, the infimum chain complex $\text{Inf}_\bullet(\{A_m\})$ is the subcomplex
				\begin{equation}
					\text{Inf}_n(\{A_m\})= \bigcup_{C'_\bullet \subseteq C_\bullet,\, C'_n\subseteq A_n} C'_n .
				\end{equation}
				Similarly, the supremum chain complex $\text{Sup}_\bullet(\{A_m\})$ is the subcomplex
				\begin{equation}
					\text{Sup}_n(\{A_m\}) =  \bigcap_{C'_\bullet \subseteq C_\bullet,\, A_n\subseteq C'_n} C'_n.
				\end{equation}
			\end{definition}
			
			
			We observe that $\text{Inf}_n(\{A_m\})\leq A_n \leq \text{Sup}_n(\{A_m\}) $ for all $n\in \mathbb{N}_0$.
			\begin{equation*}
				\xymatrix@C=3em{
					\text{Inf}_{n+1}(\{A_m\}) \ar[r]^{d_{n+1}} \ar[d]&\text{Inf}_n(\{A_m\})\ar[r]^{d_{n}} \ar[d] & \text{Inf}_{n-1}(\{A_m\})\ar[d]& \\
					A_{n+1}  \ar@{_{(}.>}[d]  & A_{n} \ar@{_{(}.>}[d] & A_{n-1} \ar@{_{(}.>}[d] & \\
					\text{Sup}_{n+1}(\{A_m\}) 	\ar[r]^{d_{n+1}}&\text{Sup}_n(\{A_m\}) \ar[r]^{d_{n}} & \text{Sup}_{n-1}(\{A_m\})  &
				}
			\end{equation*}
			It can be easily shown that 
			\begin{equation}
				\text{Inf}_n(\{A_m\})= A_n \cap d^{-1}_{n}(A_{n-1}),
			\end{equation} 
			and
			\begin{equation}
				\text{Sup}_n(\{A_m\}) = A_n + d_{n+1}(A_{n+1}).
			\end{equation}
			Therefore, 
			\begin{equation}
				H_n(\text{Inf}_\bullet(\{A_m\})) \cong H_n(\text{Sup}_\bullet(\{A_m\})) ,
			\end{equation}
			and is called the embedded homology of $\{A_n\}$. 
			
			Chain complexes of the form
			\begin{equation}	
				\cdots \longrightarrow C_1 \longrightarrow C_0 \longrightarrow 0
			\end{equation} 
			are commonly utilized across numerous mathematical domains. Consequently, unless specified otherwise, we assume all chain complexes discussed in this paper conform to this particular form.
			
			\begin{example} \label{exp:1}
				Chain complexes are abundant, and creating abstract examples is relatively straightforward. This article follows an abstract method for constructing chain complexes that will be repeatedly employed. Let $V$ be a nonempty set, $R$ a commutative ring with unity, and $p$ be a non-negative integer. Elements of $V^{p+1}$ are called elementary $p$-th path. Let 
				\begin{equation}
					C_p = \displaystyle \bigoplus_{v \in V^{p+1}} R
				\end{equation}
				be the free $R$-module generated by the elementary $p$-path . For $v=(v_0,v_1\cdots,v_p)\in V^{p+1}$, and $i\in \{0,1,\cdots, p\}$, let $v^{(i)}$ be the elementary $(p-1)$-path obtained from $v$ by dropping the $i$-th element in the sequence. Define the $(p+1)$-th boundary $R$-map $\partial_{p+1}:C_{p+1}\longrightarrow C_p$ to be the map that sends $v$ to $\sum_{i=0}^{p} (-1)^iv^{(i)}$, and $\partial_0$ to be the zero map. Then, $\partial_p\partial_{p+1}=0$ for all non-negative integers $p$. Consequently, $(C_\bullet, \partial_\bullet)$ is a chain complex.
				
				For an empty subset of $V^{p+1}$, we assign the zero submodule of $C_p$, and for a nonempty subset $P$ of $V^{p+1}$, we assign the submodule generated by $P$ elements. Then, a sequence of subsets $\{P_{p}|P_p\subseteq V^{p+1}\text{, } p\in \mathbb{N}_0\}$ induces a sequence of submodules $\{\Lambda_p|\Lambda_p \text{ is induced by }P_p\text{, } p\in \mathbb{N}_0\}$, which induces an infimum chain complex $\{\Omega_{p}|\Omega_p=	\text{Inf}_p(\{A_m\})\text{, } p\in \mathbb{N}_0\}$ as defined in Definition \ref{def:2}.
			\end{example}
			
			
			\subsection{Laplacian and Dirac of chain complexes}
			
			When applied over subrings of the complex number field, the Laplacian and Dirac operators exhibit numerous algebraic and computational characteristics. For example, when considering chain complexes defined over real numbers or all complex numbers, the dimension of the $n$-th homology can be succinctly expressed as the rank of the $n$-th Laplacian operator. This section provides the Laplacian and Dirac operator definitions for chain complexes composed of vector spaces over the ring $R$, where $R$ can be either real or complex numbers. In the remaining part of this article, we assume that $R$ is either $\mathbb{R}$ or $\mathbb{C}$, and we use the notation $\mathbb{K}$ instead of $R$.
			
			We first define the standard inner products to define Laplacian and Dirac operators. Given a basis $\mathbb{B}=\{v_{\alpha}\}_{\alpha \in J}$ of a vector space $\mathbb{V}$, we define the standard inner product to be
			
			\begin{equation*}
				\langle   v_{i},  v_{j}\rangle =\left\{
				\begin{array}{ll}
					1, & \hbox{$i=j$;} \\
					0, & \hbox{otherwise.}
				\end{array}
				\right.
			\end{equation*}
			for all $i,j \in J$, an index set.
			
			
			\begin{defin}[Laplacian Operators] \label{def:3}
				Let $(C_\bullet, d_\bullet)$ be a chain complex consisting of vector spaces over $\mathbb{K}$. Then, for a fixed basis for each $C_n$ equipped with the standard inner product, e.g., the basis consists of orthonormal elements, the $n$-th Laplacian operator is defined to be
				\begin{equation}
					\Delta_n = \Delta_n^{\rm Up}+\Delta_n^{\rm Down},
				\end{equation}
				where $\Delta_n^{\rm Up} = d_{n+1}d_{n+1}^*$, $\Delta_n^{\rm Down} = d^*_nd_n$, and $d^*_n$ is the adjoint operator of $d_n$.
			\end{defin}
			

			Laplacian operators possess several noteworthy algebraic characteristics with significant computational implications, whether applied to real or complex numbers. Initially, Laplacian operators exhibit the properties of being Hermitian, self-adjoint, and non-negative semi-definite operators. Additionally, all of their eigenvalues are real and non-negative. Furthermore, Laplacian operators demonstrate a range of decompositional and homological properties that hold considerable computational importance. We recommend referring to \cite{wee2023persistent} for details. For a linear operator $T$ between two finite dimensional vector spaces, we denote its nullity by $\eta(T)$.
			
			
			\begin{proposition} \label{prop:1}
				Let $(C_\bullet, d_\bullet)$ be a chain complex over $\mathbb{K}$. Then,
				for all $p\in \mathbb{N}_0$, we have:
				\begin{enumerate}
					\item $C_{p}=\ker( \Delta_{p})\oplus \mathrm{Im}d_{p+1}\oplus\mathrm{Im}(d_{p}^{\ast})$. 
					\item \label{prop:1:3} $\ker \Delta_{p}=\ker (d_{p})\cap \ker (d_{p+1}^{\ast})\cong H_{p}(C_\bullet)$.
				\end{enumerate} 
			\end{proposition}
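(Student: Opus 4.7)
The plan is to reduce everything to two standard finite-dimensional facts: (a) for a linear map $T:V\to W$ between finite-dimensional inner product spaces, $\mathrm{Im}(T)^{\perp}=\ker(T^{\ast})$, which gives the orthogonal decomposition $V=\ker(T)\oplus\mathrm{Im}(T^{\ast})$; and (b) $d_{p}d_{p+1}=0$ implies $d_{p+1}^{\ast}d_{p}^{\ast}=0$, so $\mathrm{Im}(d_{p}^{\ast})\subseteq\ker(d_{p+1}^{\ast})$ and dually $\mathrm{Im}(d_{p+1})\subseteq\ker(d_{p})$. Once those are in hand the proposition follows by organizing the pieces in the right order.

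First I would prove part (\ref{prop:1:3}), the characterization of $\ker\Delta_{p}$. The containment $\ker(d_{p})\cap\ker(d_{p+1}^{\ast})\subseteq\ker\Delta_{p}$ is immediate from $\Delta_{p}=d_{p+1}d_{p+1}^{\ast}+d_{p}^{\ast}d_{p}$. For the reverse, I would pair $\Delta_{p}v$ with $v$ and use self-adjointness to write
\begin{equation*}
\langle\Delta_{p}v,v\rangle=\langle d_{p+1}^{\ast}v,d_{p+1}^{\ast}v\rangle+\langle d_{p}v,d_{p}v\rangle=\|d_{p+1}^{\ast}v\|^{2}+\|d_{p}v\|^{2},
\end{equation*}
so $\Delta_{p}v=0$ forces both $d_{p}v=0$ and $d_{p+1}^{\ast}v=0$. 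This identifies $\ker\Delta_{p}$ exactly.

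Next I would establish the three-term orthogonal decomposition in part (1). Applying fact (a) to $d_{p}:C_{p}\to C_{p-1}$ gives $C_{p}=\ker(d_{p})\oplus\mathrm{Im}(d_{p}^{\ast})$ as orthogonal summands. Inside $\ker(d_{p})$, I use $\mathrm{Im}(d_{p+1})\subseteq\ker(d_{p})$ from fact (b), together with $\mathrm{Im}(d_{p+1})^{\perp}\cap\ker(d_{p})=\ker(d_{p+1}^{\ast})\cap\ker(d_{p})$, which equals $\ker\Delta_{p}$ by the previous paragraph. This yields the orthogonal splitting $\ker(d_{p})=\mathrm{Im}(d_{p+1})\oplus\ker\Delta_{p}$, and combining with the first splitting gives the desired triple decomposition $C_{p}=\ker\Delta_{p}\oplus\mathrm{Im}(d_{p+1})\oplus\mathrm{Im}(d_{p}^{\ast})$.

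Finally, for the isomorphism $\ker\Delta_{p}\cong H_{p}(C_{\bullet})$ in part (\ref{prop:1:3}), I would consider the composition $\ker\Delta_{p}\hookrightarrow\ker(d_{p})\twoheadrightarrow\ker(d_{p})/\mathrm{Im}(d_{p+1})=H_{p}(C_{\bullet})$. Injectivity follows because its kernel is $\ker\Delta_{p}\cap\mathrm{Im}(d_{p+1})=\{0\}$ by the orthogonality of the two summands just established, and surjectivity follows from the decomposition $\ker(d_{p})=\mathrm{Im}(d_{p+1})\oplus\ker\Delta_{p}$, so every class in $H_{p}$ has a harmonic representative. There is no real obstacle here; the only point requiring care is the consistent use of the adjoint identity $\mathrm{Im}(T)^{\perp}=\ker(T^{\ast})$ and the accompanying orthogonality arguments, since the whole structure of the proof rests on them.
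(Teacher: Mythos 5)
Your proof is correct and complete: it is the standard finite-dimensional Hodge decomposition argument, and every step (the adjoint identity $\mathrm{Im}(T)^{\perp}=\ker(T^{\ast})$, the positivity computation $\langle\Delta_{p}v,v\rangle=\|d_{p+1}^{\ast}v\|^{2}+\|d_{p}v\|^{2}$, and the splitting of $\ker(d_{p})$ into $\mathrm{Im}(d_{p+1})\oplus\ker\Delta_{p}$) is sound. The paper itself gives no proof of this proposition --- it is quoted from the cited reference on persistent Dirac operators --- so there is nothing to contrast your argument with; it supplies exactly the justification the paper omits. One cosmetic remark: the key identity $\langle d_{p+1}d_{p+1}^{\ast}v,v\rangle=\|d_{p+1}^{\ast}v\|^{2}$ uses the defining property of the adjoint rather than self-adjointness of $\Delta_{p}$, so you may want to adjust that phrase.
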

			
			
			Property \ref{prop:1:3} in the previous proposition states that to calculate Betti numbers, we determine the dimension of the null spaces of Laplacian operators. Moving forward, we will introduce the definitions of Dirac operators.
			
			
			\begin{defin} \label{def:4}
				Let $(C_\bullet, d_\bullet)$ be a chain complex consisting of vector spaces over $\mathbb{K}$. Then, for a fixed basis for each $C_n$ equipped with the standard inner product, the $p$-th Dirac operator  $D_p$ is defined to be
				\begin{equation}
					\label{eqn:dirac}
					\renewcommand{\arraystretch}{1.5}
					D_p = 
					\begin{bmatrix}
						\textbf{0}_{n_0\times n_0}    & \mathbf{B}_1    & \textbf{0}_{n_0\times n_2}     & \cdots      & \textbf{0}_{n_0\times n_{p}} & \textbf{0}_{n_0\times n_{p+1}}  \\
						
						\mathbf{B}_1^*  & \textbf{0}_{n_1\times n_1}      & \mathbf{B}_2    & \cdots      & \textbf{0}_{n_1\times n_{p}} & \textbf{0}_{n_1\times n_{p+1}}  \\
						
						\textbf{0}_{n_2\times n_0}      & \mathbf{B}_2^*  & \textbf{0}_{n_2\times n_2}       & \cdots    & \textbf{0}_{n_2\times n_{p}} & \textbf{0}_{n_2\times n_{p+1}}   \\
						
						\vdots & \vdots & \vdots & \ddots & \vdots & \vdots  \\
						
						\textbf{0}_{n_{p}\times n_0}       & \textbf{0}_{n_{p}\times n_1}       & \textbf{0}_{n_{p}\times n_2}       & \cdots       &  \textbf{0}_{n_{p} \times n_{p}}       & \mathbf{B}_{p+1}\\
						
						\textbf{0}_{n_{p+1}\times n_0}       & \textbf{0}_{n_{p+1}\times n_1}       & \textbf{0}_{n_{p+1}\times n_2}       & \cdots       & \mathbf{B}_{p+1}^*  & \textbf{0}_{n_{p+1}\times n_{p+1}}
					\end{bmatrix},
				\end{equation}
				where $\mathbf{B}_n $ is the matrix representation of $d_n$, and $	\mathbf{B}_n^*$ is its adjoint matrix.
				
			\end{defin}
			
			
			Similar to Laplacian operators, Dirac operators exhibit numerous algebraic properties and possess certain connections with Laplacian operators, as discussed in \cite{wee2023persistent}. Firstly, for a non-negative integer $p$, the operator $D_p$ is Hermitian and self-adjoint. Secondly,
			\begin{equation} \label{eq:5}
				D_p^2 =\begin{bmatrix}
					L_0    & \textbf{0}_{n_0\times n_1}    & \textbf{0}_{n_0\times n_2}    & \cdots      & \textbf{0}_{n_0\times n_p}  & \textbf{0}_{n_0\times n_{p+1}} \\
					
					\textbf{0}_{n_1\times n_0}   & L_1      & \textbf{0}_{n_1\times n_2}    & \cdots      & \textbf{0}_{n_1\times n_p} & \textbf{0}_{n_1\times n_{p+1}} \\
					
					\textbf{0}_{n_2\times n_{0}}   & \textbf{0}_{n_2\times n_{1}}   & L_2      & \cdots    & \textbf{0}_{n_2\times n_{p}}  & \textbf{0}_{n_2\times n_{p+1}}  \\
					
					\vdots & \vdots & \vdots & \ddots & \vdots & \vdots  \\
					
					\textbf{0}_{n_{p}\times n_{0}}         & \textbf{0}_{n_{p}\times n_{1}}         & \textbf{0}_{n_{p}\times n_{2}}       & \cdots       & L_p      & \textbf{0}_{n_{p}\times n_{p+1}} \\
					
					\textbf{0}_{n_{p+1}\times n_{0}}     & \textbf{0}_{n_{p+1}\times n_{1}}       &\textbf{0}_{n_{p+1}\times n_{2}}     & \cdots       & \textbf{0}_{n_{p+1}\times n_{p}}  & L^{\rm Down}_{p+1}
				\end{bmatrix},
			\end{equation}
			where $L_i$ is the matrix representation of $\Delta_i$, and $L^{\rm Down}_{p+1}$ is the representation matrix for $\Delta_{p+1}^{\rm Down}$.  Therefore,
			\begin{equation}
				\text{ker}(D_p)=\text{ker}(D_p^2)\cong(\bigoplus_{i=0}^{p}\text{ker}(L_i))\bigoplus\text{ker}(L_{p+1}^{\rm Down})			
			\end{equation}
			and
			\begin{equation}\label{eq:1}
				\eta(D_p)=\eta(L_{p+1}^{\rm Down})+\sum_{i=0}^{p}\eta(L_i),
			\end{equation}
			which implies that $\eta(D_p)\geq \sum_{i=0}^{p}\eta(L_i)$ with equality if and only if $ \eta(L_{p+1}^{\rm Down})=0$, that is , if and only if $d_{p+1}$ is injective. We next have the following proposition.
			
			\begin{proposition}\label{prop:2}
				Let $(C_\bullet, d_\bullet)$ be a chain complex over $\mathbb{K}$. Then,
				\begin{enumerate}
					\item  For all $p\in \mathbb{N}_0$, all eigenvalues of $D_p$ are real numbers.
					\item For all $p\in \mathbb{N}_0$, if $\lambda$ is an eigenvalue of $D_p$, then $-\lambda$ is an eigenvalue of $D_p$.
				\end{enumerate}
			\end{proposition}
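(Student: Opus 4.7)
The plan is to establish both parts via direct structural arguments based on the block description of $D_p$ given in Equation~\eqref{eqn:dirac}.

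For part (1), I would invoke the fact, already noted just before the statement, that $D_p$ is Hermitian and self-adjoint: by construction, the only nonzero blocks of $D_p$ are at positions $(i,i+1)$ (equal to $\mathbf{B}_{i+1}$) and $(i+1,i)$ (equal to $\mathbf{B}_{i+1}^*$), which makes $D_p$ equal to its own conjugate transpose. It is then a standard linear algebra fact that a Hermitian operator on a finite-dimensional inner product space over $\mathbb{K}$ has only real eigenvalues, so the argument reduces to citing this fact once the Hermitian property is noted.

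For part (2), the idea is to exhibit an involution on $C_0 \oplus C_1 \oplus \cdots \oplus C_{p+1}$ that anticommutes with $D_p$. Specifically, define the block-diagonal operator $\Gamma$ whose $i$-th diagonal block is $(-1)^i I_{n_i}$ for $i = 0, 1, \ldots, p+1$. Then $\Gamma$ is unitary and $\Gamma^2 = I$. I would then verify directly, using the row/column computation $(D_p v)_i = \mathbf{B}_i^* v_{i-1} + \mathbf{B}_{i+1} v_{i+1}$, that for any vector $v = (v_0,\ldots,v_{p+1})$ satisfying $D_p v = \lambda v$, the vector $w = \Gamma v$ satisfies $D_p w = -\lambda w$. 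Concretely, each nonzero block $\mathbf{B}_i$ or $\mathbf{B}_i^*$ connects adjacent levels $i-1$ and $i$, whose signs under $\Gamma$ differ, so conjugation by $\Gamma$ flips the sign of every entry of $D_p$. Since $\Gamma$ is invertible, $w \neq 0$ whenever $v \neq 0$, yielding the desired symmetry of the spectrum.

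There is no serious obstacle in either part: part (1) is essentially immediate from the Hermitian property, and part (2) is a straightforward verification once the chirality operator $\Gamma$ is introduced. The only small point of care is keeping track of indices in the anticommutation computation $\Gamma D_p \Gamma = -D_p$, since each nonzero block of $D_p$ sits between two adjacent levels $i$ and $i+1$ whose $\Gamma$-signs $(-1)^i$ and $(-1)^{i+1}$ always differ, making the sign flip uniform across all blocks.
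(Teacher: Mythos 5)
Your proposal is correct. Note, however, that the paper itself gives no explicit proof of this proposition: it asserts just beforehand that $D_p$ is Hermitian (which settles part (1) exactly as you argue) and records the relation $D_p^2 = \mathrm{diag}(L_0,\dots,L_p,L_{p+1}^{\rm Down})$, deferring the rest to the cited reference. The remark following the proposition, that $\pm\sqrt{\lambda}$ is an eigenvalue of $D_p$ whenever $\lambda$ is an eigenvalue of some $L_i$, suggests the paper's implicit route to part (2) runs through this squaring identity; but that identity alone only pins down the absolute values $|\lambda|$ of the eigenvalues of $D_p$, not the fact that $+\lambda$ and $-\lambda$ occur together (indeed with equal multiplicity). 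Your chirality operator $\Gamma=\mathrm{diag}((-1)^iI_{n_i})$ with $\Gamma D_p\Gamma=-D_p$ closes exactly this gap: since every nonzero block of $D_p$ couples adjacent grading levels $i$ and $i\pm1$, conjugation by $\Gamma$ negates $D_p$, and $v\mapsto\Gamma v$ is an isomorphism between the $\lambda$- and $(-\lambda)$-eigenspaces. This is the standard supersymmetry argument for such block-tridiagonal Hermitian operators, and it is both complete and arguably more informative than what the paper sketches, since it also yields equality of multiplicities for $\pm\lambda$ and, combined with the squaring identity, recovers the paper's follow-up remark.
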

			
			Further, if $\lambda$ is an eigenvalue of $D_p$, then $\lambda^2$ is an eigenvalue of $L_i$ for some $0\leq i\leq p$ or of $L^{\rm Down}_{p+1}$. Conversely, if $\lambda$ is an eigenvalue of $L_i$ for some $0\leq i\leq p$ or of $L^{\rm Down}_{p+1}$, then $\pm \sqrt{\lambda}$ is an eigenvalue of $D_p$.


			
			\section{Path Dirac}
			
			Path homology and hypergraphs are emerging as powerful mathematical tools, especially for encapsulating higher-order relationships. They are increasingly utilized in diverse fields such as computer science, bioinformatics, and data mining. Extracting relevant information from paths and hypergraphs, particularly those with geometric interpretations, heavily relies on chain complexes. Over the past few decades, various chain complexes, including hypergraph complexes and simplicial complexes, have been associated with hypergraphs, as discussed in \cite{bressan2016embedded}.
			Additionally, path complexes have been introduced more recently as an extension of these existing complexes. This development is highlighted in \cite{grigor2012homologies, grigor2020path}. Path complexes have proven particularly valuable in data clustering and various other applications, as evidenced by studies like \cite{wang2022persistent, chen2023persistent}.
			This section will provide an overview of the fundamental concepts related to path homology and path complexes. Our focus will extend to illustrating results that support the construction of associated path complexes for both hypergraphs and directed graphs (digraphs). This discussion aims to comprehensively understand how these mathematical tools can effectively represent and analyze complex relational structures in various applications.
			
			\subsection{Path complex and path homology}

			Let $V$ be a nonempty set. A path complex $P$ over $V$ is a disjoint union of collection of subsets $ \{P_p|P_p\subset V^{p+1},\, P_0\neq \emptyset\}_{p\in \mathbb{N}_0}$ such that for every non-negative integer $p$, and $v=(v_0, v_1,\cdots,v_{p+1})\in P_{p+1}$, $v^{(0)}=(v_1,\cdots,v_{p+1})$ and $ v^{(p+1)}=(v_0, v_1,\cdots,v_p)\in P_p$. A path complex induces a chain complex as in Example \ref{exp:1}. We explain the construction thoroughly here. Let $V$ be a nonempty set and $P$ be a path complex over $V$. Let $p$ be a non-negative integer, and 
			\begin{equation*}
				\Lambda_p =  \text{span} \{ v=(v_0, v_1,\cdots,v_{p})\in P_{p}\} 
			\end{equation*} 
			be the vector space over the field $\mathbb{K}$ generated by all elementary $p$-th path in $P$. Let $(C_\bullet, \partial_\bullet)$ be the chain complex induced by $V$ as in Example \ref{exp:1}, where $\partial_{p+1}:C_{p+1}\longrightarrow C_p$ is defined by 
			$$\partial_{p+1}(v_0,v_1,\cdots, v_{p+1}) = \sum_{i=0}^{p} (-1)^i(v_0,\cdots, \hat{v_i},\cdots, v_{p+1})$$
			for all $p\geq 0$, and $\partial_0$ is the zero map, where $\hat{v_i}$ means $v_i$ is omitted from the sequence. Here $\Lambda_\bullet = (\Lambda_p)_{p\geq 0}$ may not be chain complex with boundary maps of $(C_\bullet, \partial_\bullet)$. However, it contains an infimum subchain complex $(\Omega_\bullet, \partial_\bullet)$ as in Definition \ref{def:2},
			\begin{equation}
				\cdots \Omega_p \stackrel{\partial_p} \longrightarrow \Omega_{p-1} \stackrel{\partial_{p-1}} \longrightarrow \cdots \stackrel{\partial_1} \longrightarrow  \Omega_0 \stackrel{\partial_0} \longrightarrow  0,
			\end{equation}
			where
			$$\Omega_{p} = \{v\in \Lambda_{p}| \partial_{p}v\in \Lambda_{p-1}\},$$ 
			whose elements are called $\partial$-invariant $p$-paths, and whose homology groups $(H_p)_{p\geq 0}$ are called the path homology groups of the path complex $P$.
			

			\subsection{Path homology of directed graphs}
			
			Recall that a digraph $G$ is a pair $G=(V, E)$ of sets, where $V$ is a nonempty set whose elements are called vertices of $G$, and $E$ is a collection of subsets of $V\times V$ whose elements are called directed edges. In this case, $G$ is called a digraph on $V$. A {\it walk} in $G$ is an alternating sequence of vertices and edges of the form $(v_0,e_1,v_1,\cdots,e_n,v_n)$ such that $v_{i-1}\neq v_{i}$, and $e_i = (v_{i-1},v_{i})$ for $1\leq i \leq n$, where $n$ is called the length of the walk. The sequence $(v_0,v_1,\cdots,v_n)$ is called the anchor sequence of the walk. Sequences of the form $(v_0)$ are called trivial walks of length zero. Digraph anchor sequences can naturally induce path complexes. In \cite{grigor2012homologies, grigor2020path}, a general definition of path homologies on digraphs is given. In this work, we consider the path homologies of density two, which gives an extended version of digraph homologies, which we will keep calling digraph homologies. 
			
			\begin{defin} \label{def:7}
				Let $G=(V, E)$ be a digraph on $V$, and $p$ be a non-negative integer. Let $P_p$ be the collection of all anchor sequences of length $p$ in $G$, and $P= \{P_p\subset V^{p+1}\}_{p\in \mathbb{N}_0}$ be the set of these collections.
				Then, $P$ is the path complex induced by $G$.
			\end{defin}
			
			Since digraphs have many properties, the first three components of path complexes can be generated by the characterization in Proposition \ref{prop:5}. 
			
			\begin{proposition}[\cite{grigor2012homologies, grigor2020path}] \label{prop:5}
				Let $G$ be a digraph. Then, the zero-degree component of the chain complex of its path complex is generated by the vertices, the first-degree one is generated by $(v_0,v_1)$ such that $v_0$ is adjacent to $v_1$, and the second-degree one is generated by anchor sequences of the form $(v_0,v_1,v_2)$ such that $v_0$ is adjacent to $v_2$, $v_0\neq v_2$, and of the form $(v_0,v_1,v_2)-(v_0,v_1',v_2)$, with possibly $v_0= v_2$.
			\end{proposition}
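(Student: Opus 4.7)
The plan is to unpack the definition $\Omega_p = \{v \in \Lambda_p \mid \partial_p v \in \Lambda_{p-1}\}$ given in the previous subsection, where $\Lambda_p$ is the $\mathbb{K}$-span of anchor sequences of length $p$ in $G$. For a digraph, anchor sequences are by definition those $(v_0,\dots,v_p)$ with $v_{i-1}\neq v_i$ and $(v_{i-1},v_i)\in E$ for every $i$, so the question reduces to computing boundary images on monomials and detecting when each summand is again a regular edge. I would treat $p=0,1,2$ in order.

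The cases $p=0$ and $p=1$ are essentially immediate. Since $\partial_0=0$, the condition defining $\Omega_0$ is vacuous, so $\Omega_0=\Lambda_0$ is the span of trivial walks, i.e.\ the vertices. For $p=1$, we have $\partial_1(v_0,v_1)=v_1-v_0\in\Lambda_0$ for every anchor sequence, hence $\Omega_1=\Lambda_1$ is generated exactly by the pairs $(v_0,v_1)$ with $v_0\neq v_1$ and $(v_0,v_1)\in E$.

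The content is in $p=2$. Given $(v_0,v_1,v_2)\in P_2$, I would compute
\[
\partial_2(v_0,v_1,v_2) \;=\; (v_1,v_2) \;-\; (v_0,v_2) \;+\; (v_0,v_1),
\]
and observe that the outer two summands automatically lie in $\Lambda_1$, because $(v_0,v_1)$ and $(v_1,v_2)$ are edges by the anchor-sequence hypothesis. The sole obstruction to $\partial$-invariance is the middle term $(v_0,v_2)$, which fails to lie in $\Lambda_1$ precisely when $v_0=v_2$ or $(v_0,v_2)\notin E$. I would then write a general element $x=\sum c_{v_0 v_1 v_2}(v_0,v_1,v_2)\in\Lambda_2$, group the middle terms of $\partial_2 x$ by the pair $(v_0,v_2)$, and conclude that $x\in\Omega_2$ if and only if, for every pair $(v_0,v_2)$ with $v_0=v_2$ or $(v_0,v_2)\notin E$, the fibered coefficient sum $\sum_{v_1} c_{v_0 v_1 v_2}$ vanishes.

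From this linear-algebra description, the claimed generating set drops out. The anchor sequences $(v_0,v_1,v_2)$ with $v_0\neq v_2$ and $(v_0,v_2)\in E$ are subject to no cancellation constraint and hence lie in $\Omega_2$ individually, while for the pairs $(v_0,v_2)$ failing this condition the kernel of the coefficient-sum functional is spanned by the differences $(v_0,v_1,v_2)-(v_0,v_1',v_2)$, which therefore lie in $\Omega_2$ even when $v_0=v_2$. Taken together these span the solution space. The only real obstacle is notational rather than conceptual: one must carefully distinguish genuine anchor sequences in $\Lambda_1$ from merely formal elements of $C_1\setminus\Lambda_1$, and verify directly that the difference generators remain $\partial$-invariant in the degenerate case $v_0=v_2$, where $(v_0,v_2)$ is not a walk at all and the two $-(v_0,v_2)$ terms must cancel algebraically inside $\Lambda_1$.
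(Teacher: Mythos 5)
Your argument is correct and complete: reducing $\Omega_2=\Lambda_2\cap\partial_2^{-1}(\Lambda_1)$ to the vanishing of the fibered coefficient sums $\sum_{v_1}c_{v_0v_1v_2}$ over each non-anchor pair $(v_0,v_2)$, and then spanning each such kernel by differences, is exactly the standard computation, and you correctly handle the degenerate case $v_0=v_2$ where the $(v_0,v_0)$ terms cancel inside $C_1$. The paper itself gives no proof --- it cites this proposition from the references on path homology --- so there is nothing to compare against beyond noting that your derivation is the natural one found there.
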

			
			Proposition \ref{prop:5} asserts that path complexes induced by digraphs effectively represent triangles and squares within directed graphs. However, various complexities arise when describing the generators of the higher-degree components, necessitating additional definitions and fundamental findings to address them.
			
			Every digraph $G$ has an underlying graph $G'$, obtained by forgetting the orientation of the edges, for which both induce the same zero-degree homology. Therefore, the rank of the first homology $H_0$ of $G$ over principal ideal domains can be computed using the formula of graph cases. Hence, $ H_0\cong R^{|C|}$, where $C$ is the set of the connected components of the underlying graph. We have a natural embedding from the complex induced by a digraph to the complex induced by its underlying graph (see hypergraphs). Therefore, the rank can be given explicitly  by
			\begin{equation} \label{eq:3}
				\text{rank}(H_1) = |E_{G'}|-|V_{G'}|+|C_{G'}|+|S|-\text{rank}(\text{Im}d_2),
			\end{equation}
			where $S=\{\{(v_0,v_1), (v_1,v_0)\}\subset E_G\}$. Hence, an upper limit can be given by
			\begin{equation}
				\text{rank}(H_1)\leq |E_{G'}|-|V_{G'}|+|C_{G'}|+|S|.
			\end{equation}


			\subsection{Path Dirac operators}
			
			When we equip $C_p$ with the inner product defined by 
			\begin{equation*}
				\langle   x,  y\rangle =\left\{
				\begin{array}{ll}
					1, & \hbox{$x=y$;} \\
					0, & \hbox{otherwise.}
				\end{array}
				\right.
			\end{equation*}
			for all sequences $x,y$ of elements in $V$ of length $p+1$, this inner product restricts to an inner product on $\Omega_p$. By fixing an orthonormal basis for each $\Omega_p$, we can define the $p$-th path Dirac operator to be
			the $p$-th Dirac operator  $D_p$ defined as
			\begin{equation}
				\label{eqn:dirac}
				\renewcommand{\arraystretch}{1.5}
				D_p = 
				\begin{bmatrix}
					\textbf{0}_{n_0\times n_0}    & \mathbf{B}_1    & \textbf{0}_{n_0\times n_2}     & \cdots      & \textbf{0}_{n_0\times n_{p}} & \textbf{0}_{n_0\times n_{p+1}}  \\
					
					\mathbf{B}_1^*  & \textbf{0}_{n_1\times n_1}      & \mathbf{B}_2    & \cdots      & \textbf{0}_{n_1\times n_{p}} & \textbf{0}_{n_1\times n_{p+1}}  \\
					
					\textbf{0}_{n_2\times n_0}      & \mathbf{B}_2^*  & \textbf{0}_{n_2\times n_2}       & \cdots    & \textbf{0}_{n_2\times n_{p}} & \textbf{0}_{n_2\times n_{p+1}}   \\
					
					\vdots & \vdots & \vdots & \ddots & \vdots & \vdots  \\
					
					\textbf{0}_{n_{p}\times n_0}       & \textbf{0}_{n_{p}\times n_1}       & \textbf{0}_{n_{p}\times n_2}       & \cdots       &  \textbf{0}_{n_{p} \times n_{p}}       & \mathbf{B}_{p+1}\\
					
					\textbf{0}_{n_{p+1}\times n_0}       & \textbf{0}_{n_{p+1}\times n_1}       & \textbf{0}_{n_{p+1}\times n_2}       & \cdots       & \mathbf{B}_{p+1}^*  & \textbf{0}_{n_{p+1}\times n_{p+1}}
				\end{bmatrix},
			\end{equation}
			
			where $\mathbf{B}_i $ is the matrix representation of $\partial_i$, and $\mathbf{B}_i^*$ is the adjoint matrix of $\mathbf{B}_i $ for all $0\leq i\leq p+1$.
			
			In general, for an operator $\partial: X\rightarrow Y$, with a matrix representation $\mathbf{B}$, the matrix representation $\mathbf{B}^*$ of its adjoint operator may not be the adjoint matrix $\overline{\mathbf{B}}^t$. In fact, it is the matrix that satisfies 
			$$ \overline{\mathbf{B}^*}^tO_X = O_Y\mathbf{B},$$
			where $O_X$ and $\ O_Y$ are the matrix representation of the inner products on $X$ and $ Y$ respectively, and the bar over $\mathbf{B}^*$ indicates we take the complex conjugate. Choosing an orthonormal basis, however, yields that $\mathbf{B}^*$ is the adjoint matrix of $\mathbf{B}$. Therefore, $D_p$ is symmetric and has real eigenvalues whose spectrum is symmetric. That is, if 
			\[
			\text{Spectra}_+(D_p) = \{(\lambda_1)_p, (\lambda_2)_p, \cdots, (\lambda_N)_p  \}
			\]
			is the collection of all the non-negative eigenvalues of $D_p$, then
			\[
			\text{Spectra}_-(D_p) = \{(-\lambda_1)_p, (-\lambda_2)_p, \cdots, (-\lambda_N)_p  \}.
			\]
			Further, recall that, by \ref{prop:2}, 
			\begin{equation}\label{eq:4}
				\eta(D_p)=\beta_0 + \beta_1+\cdots + \beta_p+ \eta(L_{p+1}^{\rm Down}),
			\end{equation}
			where $L_{p+1}^{\rm Down}$ is the $p+1$-th down Laplacian of the path homology.
			
			In the following examples, we follow the same notation used in Example \ref{exp:1} to keep the presentation consistent, where the notation is summarized in Table \ref{table:1}. 

			\begin{figure}[t] \label{fig:4}
				\centering
				\begin{subfigure}{0.24\textwidth}
					\centering
					\includegraphics{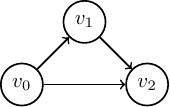}
					\caption{} 
					\label{fig:4a}
				\end{subfigure}%
				\begin{subfigure}{0.24\textwidth}
					\centering
					\includegraphics{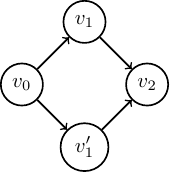}
					\caption{}
					\label{fig:4b}
				\end{subfigure}%
				\begin{subfigure}{0.31\textwidth}
					\centering
					\includegraphics{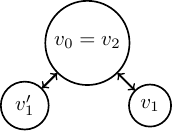}
					\caption{}
					\label{fig:4c}
				\end{subfigure}%
				\caption{The essential subgraphs that generate the second-degree component of the chain complex of path complex induced by digraphs. {(A)} shows an essential triangle, {(B)} an essential square where $v_1\neq v_1'$, and {(C)} an essential fork-like in digraphs.} 
			\end{figure}
			
			
			
			\begin{table}[t!]
				\centering
				\begin{tabular}{@{}ll}
					\hline
					$V$ & nonempty set of vertices of a digraph \\
					$(v_0, v_1, \cdots, v_p)$ & anchor sequence of length $p$\\
					$C_p$       & the vector space generated by all sequences of vertices\\
					&  of length $p+1$\\
					$P_p$     & set of all anchor sequences of length $p$ in the digraph\\
					$\Lambda_p$ & the vector space spanned by $P_p$ elements             \\
					$\Omega_{p}$          & the $\partial$-invariant $p$-paths  \\
					\hline
				\end{tabular}
				\caption{Summary of the notations used for chain complexes of path complexes constructions induced by a digraph in the article.}	\label{table:1}
			\end{table}
			

			\begin{example}\label{exp:2}
				
				This example demonstrates the computation of path complexes of digraphs and the application of Proposition \ref{prop:5}. It further emphasizes the impact of graphs’ topological and geometric features on Dirac operators. To illustrate this point, we consider two directed graphs, as shown in Figure \ref{fig:2}. These graphs have the same structural components, comprising three edges and three vertices. The sole difference between them lies in the orientation of one of their edges. This comparison highlights how a minor variation in graph structure, such as edge direction, can affect the analysis and interpretation of path complexes and Dirac operators.
				
				
				\begin{figure}[t]
					\centering
					\begin{subfigure}{.3\textwidth}
						\centering
						\includegraphics{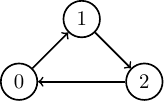}
						\caption{} \label{fig:2a}
					\end{subfigure}%
					\begin{subfigure}{.3\textwidth}
						\centering
						\includegraphics{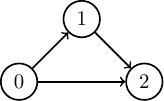}
						\caption{} \label{fig:2b}
					\end{subfigure}%
					\caption{Two digraphs to demonstrate the Dirac operator's ability to distinguish between them.}
					\label{fig:2}
				\end{figure}
				

				The first three sets of anchor sequences of both directed graphs are
				\begin{eqnarray*}
					P^1_0=P^2_0&=&\{(0), (1), (2)\}, \\
					P^1_1&=&\{(0, 1), (1, 2), (2, 0)\}, \\
					P^2_1&=&\{(0, 1), (1, 2), (0, 2)\}, \\
					P^1_2&=&\{(0, 1, 2), (1, 2, 0), (2, 0, 1)\}, \\
					P^2_2&=& \{(0, 1, 2)\},
				\end{eqnarray*}
				where for $i\in\{0,1,2\}$, $P^1_i$ is for Figure \ref{fig:2a} digraph, and $P^2_i$ is for Figure \ref{fig:2b} digraph. Applying Proposition \ref{prop:5}, we find that the components of the digraph complexes are
				\begin{eqnarray*}
					\Omega^1_0 = \Omega^2_0&=&\text{span}<\{(0), (1), (2)\}>, \\
					\Omega^1_1&=&\text{span}<\{(0, 1), (1, 2), (2, 0)\}>, \\
					\Omega^2_1&=&\text{span}<\{(0, 1), (1, 2), (0, 2)\}> ,\\
					\Omega^1_p &=& 0\  \forall p\geq 2,\\
					\Omega^2_2&=&\text{span}<\{(0, 1,2))\}> ,\\
					\Omega^2_p&=& 0\ \forall p\geq 3,
				\end{eqnarray*}
				There are no triangles or squares of the types characterized in the proposition in digraph \ref{fig:2a}; however, there is one triangle in digraph \ref{fig:2b}. Therefore, all components of a degree higher than one are zero for the first digraph. We compute the boundary maps, Laplacian, and Dirac operators of Figure \ref{fig:2a} digraph. In general, $\mathbf{B}_0$ is zero, and $\mathbf{B}_1$ can be computed as follows:
				\begin{eqnarray*}
					\partial_{1}\left(
					\begin{array}{c}
						(0,1)\\ (1,2)\\ (2,0)\\
					\end{array}
					\right)&=&\left(
					\begin{array}{ccccc}
						-1& 1& 0\\
						0& -1& 1\\
						1& 0& -1
					\end{array}
					\right)\left(
					\begin{array}{cc}
						(0)\\ (1)\\ (2) \\
					\end{array}
					\right).
				\end{eqnarray*}
				
				Since higher components are zero spaces, we compute only the zeroth Dirac operator $D_0$. $D_0$ will contains only $\mathbf{B}_1$ and $\mathbf{B}^*_1$, and therefore 
				
				\begin{center}
					$ D_0=\left(\begin{array}{cccccc}
						0& 0& 0& -1& 0& 1\\
						0& 0& 0& 1& -1& 0\\
						0& 0& 0& 0& 1& -1\\
						-1& 1& 0& 0& 0& 0\\
						0& -1& 1& 0& 0& 0\\
						1& 0& -1& 0& 0& 0 
					\end{array}\right)$
				\end{center}
				
				We also compute $L_0$ and $L_1$ since $\mathbf{B}_1$ is calculated.
				
				\begin{center}
					
					$L_0=\left( \begin{array}{ccc}
						2& -1& -1\\
						-1&  2& -1\\
						-1& -1&  2
					\end{array}\right)$\hspace{1cm}
					$L_1= \left( \begin{array}{ccc}
						2& -1& -1\\
						-1&  2& -1\\
						-1& -1&  2
					\end{array}\right)$
				\end{center}
				
				When we compute $D_0^2$, we obtain
				\begin{center}
					$ D_0^2=\left(\begin{array}{cccccc}
						2& -1& -1& 0& 0& 0\\
						-1& 2& -1& 0& 0& 0\\
						-1& -1& 2& 0& 0& 0\\
						0& 0& 0& 2& -1& -1\\
						0& 0& 0& -1& 2& -1\\
						0& 0& 0& -1& -1& 2
					\end{array}\right)$
				\end{center}
				validating equation \ref{eq:5}.
				Table \ref{table:2} and Table \ref{table:3} present characteristics of both directed graphs as identified through Laplacian and Dirac operators.
				
				
				\begin{table}[t] 
					\centering
					\begin{tabular}{@{}lllll}
						\hline
						n & Betti Number $\beta_n$ &  $\text{Spec}(L_n)$ & Dirac Nullity & $\text{Spec}(D_n)$ \\ [0.5ex] 
						\hline
						0 & 1& 0, 3, 3   & 2 & $0, 0, \pm\sqrt{3},\pm\sqrt{3} $\\ 
						1 & 1&0, 3, 3   & 3 & $0, 0, 0, \pm\sqrt{3},\pm\sqrt{3} $\\
						\hline
					\end{tabular}
					\caption{Captured features by both Laplacian and Dirac operators on the digraph \ref{fig:2a}.}
					\label{table:2}
				\end{table}
				
				
				
				\begin{table}[t] 
					\centering
					\begin{tabular}{@{}lllll}
						\hline
						n & Betti Number $\beta_n$ &  $\text{Spec}(L_n)$ & Dirac Nullity & $\text{Spec}(D_n)$ \\ [0.5ex] 
						\hline
						0 & 1 & 0, 3, 3   & 2 & $0, 0, \pm\sqrt{3},\pm\sqrt{3} $\\ 
						1 & 0 & 3, 3, 3   & 1 & $0, \pm\sqrt{3},\pm\sqrt{3}, \pm\sqrt{3} $\\
						\hline
					\end{tabular}
					\caption{Captured features by both Laplacian and Dirac operators on the digraph \ref{fig:2b}.}
					\label{table:3}
				\end{table}
				
				
			\end{example}
			

			
			\section{Hypergraph Dirac}
			Hypergraphs represent an expansion from traditional graph structures, characterized by the inclusion of hyperedges, which connect more than two vertices, unlike standard graph edges. This fundamental difference enhances hypergraphs’ representation and analytical capabilities, offering a more extensive framework for capturing and understanding complex relationships.
			This section delves into an adjusted version of the typical hypergraph complexes. We introduce an operator termed the hypergraph Dirac operator. This operator is designed to extract and analyze information about hypergraphs’ connectivity and geometric properties effectively. Its introduction advances the ability to examine intricate relationships within hypergraphs, offering deeper insights into their structural and connective complexities.
			
			\subsection{Hypergraph complexes}
			Similar to digraphs, a hypergraph $G$ is a pair $G=(V, E)$ of sets, where $V$ is a nonempty set whose elements are called vertices of $G$, and $E$ is a collection of subsets of $V$ whose elements are called hyperedges. In this case, $G$ is a hypergraph on $V$. Walks in hypergraphs are defined analogously to the digraph case; however, there are subtle differences. A walk in $G$ is an alternating sequence of vertices and edges of the form $(v_0,e_1,v_1,\cdots,e_n,v_n)$ such that $v_{i-1}\neq v_{i}$, and $\{v_{i-1},v_{i}\}\subseteq e_i$ for $1\leq i \leq n$. Its anchor sequence is then $(v_0,v_1,\cdots,v_n)$ of length $n$. This article considers an adjusted version of hypergraph homologies induced by their anchor sequences analogous to digraphs. Further, we have the following characterization of the first three components.
			
			\begin{figure}[t] \label{fig:1}
				\centering
				\begin{subfigure}{0.24\textwidth}
					\centering
					\includegraphics{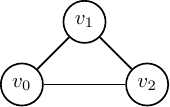}
					\caption{} 
					\label{fig:1a}
				\end{subfigure}%
				\begin{subfigure}{0.24\textwidth}
					\centering
					\includegraphics{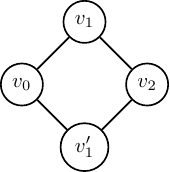}
					\caption{}
					\label{fig:1c}
				\end{subfigure}%
				\begin{subfigure}{0.31\textwidth}
					\centering
					\includegraphics{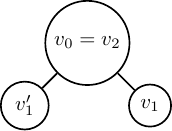}
					\caption{}
					\label{fig:1e}
				\end{subfigure}%
				\caption{The essential subgraphs that contribute to generating the second-degree hypergraph complex component. {(A)} shows an essential triangle, {(B)} an essential square where $v_1\neq v_1'$, and {(C)} an essential fork-like in hypergraphs. } 
			\end{figure}
			
			
			
			\begin{proposition}[\cite{grigor2012homologies, grigor2020path}] \label{prop:3}
				Let $G$ be a hypergraph. Then, the zero-degree component of the chain complex of its path complex is generated by the vertices, the first-degree one is generated by $(v_0,v_1)$ such that $v_0$ is adjacent to $v_1$, and the second-degree one is generated by anchor sequences of the form $(v_0,v_1,v_2)$ such that $v_0$ is adjacent to $v_2$, $v_0\neq v_2$, and of the form $(v_0,v_1,v_2)-(v_0,v_1',v_2)$, with possibly $v_0= v_2$.
			\end{proposition}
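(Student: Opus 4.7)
The plan is to argue directly from the infimum-complex description $\Omega_p = \Lambda_p \cap \partial_p^{-1}(\Lambda_{p-1})$ given in Example \ref{exp:1}, checking the three lowest degrees in turn. The generators of $\Lambda_p$ are the anchor sequences in $P_p$, so the task reduces to classifying which $\mathbb{K}$-linear combinations of elements of $P_p$ have boundaries expressible as combinations of elements of $P_{p-1}$.

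For $p = 0$ we have $\partial_0 = 0$, so $\Omega_0 = \Lambda_0$ is spanned by the vertices. For $p = 1$, any $(v_0, v_1) \in P_1$ satisfies $v_0 \neq v_1$ together with $\{v_0, v_1\} \subseteq e$ for some hyperedge $e$, and $\partial_1(v_0, v_1) = (v_1) - (v_0) \in \Lambda_0$ automatically. Thus $\Omega_1 = \Lambda_1$ is generated by pairs $(v_0, v_1)$ with $v_0$ adjacent to $v_1$ in the stated sense.

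The substantive case is $p = 2$. An anchor sequence $(v_0, v_1, v_2) \in P_2$ satisfies $v_0 \neq v_1$, $v_1 \neq v_2$, and both $\{v_0, v_1\}$ and $\{v_1, v_2\}$ are contained in some hyperedge. The boundary formula
\begin{equation*}
\partial_2 (v_0, v_1, v_2) = (v_1, v_2) - (v_0, v_2) + (v_0, v_1)
\end{equation*}
places the outer two terms in $\Lambda_1$ automatically, so membership in $\Omega_2$ is controlled by the middle term $(v_0, v_2)$. This middle term lies in $P_1$ precisely when $v_0 \neq v_2$ and $v_0$ is adjacent to $v_2$; in that case the single element $(v_0, v_1, v_2)$ already belongs to $\Omega_2$, giving the triangle generators. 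When $(v_0, v_2) \notin P_1$, the offending term must be cancelled, and the only basis elements whose boundary produces $\pm(v_0, v_2)$ are those of the form $(v_0, v_1', v_2)$; hence each difference $(v_0, v_1, v_2) - (v_0, v_1', v_2)$ lies in $\Omega_2$, supplying the fork generators (which also cover the case $v_0 = v_2$).

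To finish, I would show that these two families span $\Omega_2$ by a standard endpoint-grouping argument. Given $\omega = \sum c_{ijk} (v_i, v_j, v_k) \in \Omega_2$, partition the sum according to the endpoint pair $(v_i, v_k)$. For endpoint pairs lying in $P_1$, every summand is already a triangle generator. For endpoint pairs not in $P_1$, the condition $\partial_2 \omega \in \Lambda_1$ forces the coefficient of $(v_i, v_k)$ to vanish, i.e. $\sum_j c_{ijk} = 0$, and this zero-sum constraint rewrites the corresponding block as a combination of differences $(v_i, v_j, v_k) - (v_i, v_{j'}, v_k)$ by fixing any reference middle vertex $v_{j_0}$ and telescoping. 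The main obstacle is keeping this bookkeeping clean; once the split by endpoint pair is made, the spanning claim is immediate, and nothing in the argument distinguishes hypergraphs from digraphs beyond the notion of ``adjacency'' used to test whether $(v_0, v_2) \in P_1$.
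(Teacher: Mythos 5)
Your argument is correct. Note, however, that the paper offers no proof of this proposition at all: it is stated as a citation to Grigor'yan et al., so there is no internal proof to compare against. Your route through the infimum description $\Omega_2 = \Lambda_2 \cap \partial_2^{-1}(\Lambda_1)$ is the natural one, and the degree $0$ and $1$ cases are handled correctly. In degree $2$, the one step worth making fully explicit is the claim that, for an endpoint pair $(v_i,v_k)\notin P_1$, the coefficient of $(v_i,v_k)$ in $\partial_2\omega$ equals $-\sum_j c_{ijk}$ with no contributions from other blocks. This holds because the outer two terms of $\partial_2(a,b,c)=(b,c)-(a,c)+(a,b)$ are consecutive pairs of an anchor sequence and hence always lie in $P_1$; consequently a pair outside $P_1$ can only arise as the middle term $-(a,c)$, and only from paths whose endpoints are exactly $(v_i,v_k)$. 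You observe that the outer terms lie in $\Lambda_1$ automatically, which is the same fact, but the spanning argument needs it in the stronger form that these outer terms can never equal a non-$P_1$ pair, so the decomposition by endpoint pair has no cross-talk. With that observation the zero-sum constraint and the telescoping against a reference middle vertex complete the proof, and the argument indeed applies verbatim to digraphs with the appropriate notion of adjacency.
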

			
			
			Therefore, like path homologies, Proposition \ref{prop:3} asserts that hypergraph complexes effectively represent triangles and squares within hypergraphs. 
			
			As the anchor sequences influence hypergraph complexes, it is essential to note that the maximal hyperedges entirely determine the hypergraph complex of a hypergraph, which are the ones that are not part of any other hyperedges, for instance. Let $G$ be a hypergraph. Consider the graph $\overline{G}$ (see Figure \ref{fig:5}), called the essential graph of $G$, sharing the same vertex set with $G$, and has a set of edges satisfying the rule for every two distinct vertices $v_0$ and  $v_1$, $v_0$ is adjacent to $v_1$ in $\overline{G}$ if and only if there is a hyperedge $e$ in $G$ containing both. Notice then that $G$ and $\overline{G}$ have the same induced complex. Therefore, the rank of the first homology $H_0$ of $G$ over principal ideal domains can be computed using the formula of graph cases. Hence, $ H_0\cong R^{|C|}$, where $C$ is the set of the connected components of $\overline{G}$.
			
			
			\begin{figure}[t!]
				\centering
				\begin{subfigure}{.20\textwidth}
					\centering
					\includegraphics[width=.99\linewidth]{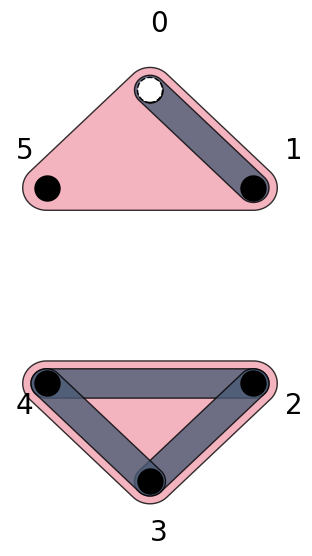}
					\caption{}
					\label{fig:5a}
				\end{subfigure}%
				\hspace{0.5cm}
				\begin{subfigure}{.20\textwidth}
					\centering
					\includegraphics[width=.99\linewidth]{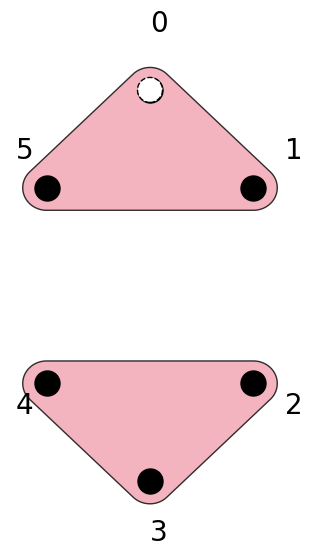}
					\caption{}
					\label{fig:5b}
				\end{subfigure}%
				\hspace{0.5cm}
				\begin{subfigure}{.20\textwidth}
					\centering
					\includegraphics[width=.99\linewidth]{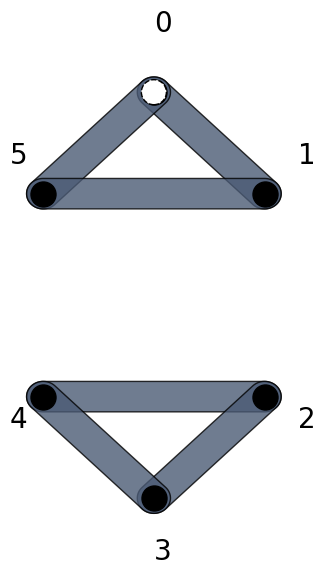}
					\caption{}
					\label{fig:5c}
				\end{subfigure}%
				\caption{Three hypergraphs inducing the same complex: {(A)} a hypergraph $G$, {(B)} a sub-hypergraph obtained by maximal hyperedges only, and {(C)} $G$ essential graph.}
				\label{fig:5}
			\end{figure}
			
			
			Next, we obtain a similar equation to $H_1 \cong R^{|E|-|V|+|C|}$ of graph homologies.
			
			
			\begin{proposition} \label{prop:4}
				Let $G$ be a hypergraph. and $\overline{G}$ be the graph obtained from $G$ as in the previous discussion. Let $R=k$ be a field. Then, 
				\begin{equation}
					H_1 \cong R^{2|E_{\overline{G}}|-|V_G|+|C_{\overline{G}}|-\rm{dim}( \rm{Imd}_2)},
				\end{equation}
				where $E_{\overline{G}}$ is the set of edges of $\overline{G}$, and $C_{\overline{G}}$ is the set of connected components. $d_2$ is the boundary map from the hypergraph complex component generated by paths of length three to the hypergraph complex component generated by the edges.
			\end{proposition}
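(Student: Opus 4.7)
The plan is to reduce everything to a dimension count and then invoke the fact that over a field, finite-dimensional vector spaces of equal dimension are isomorphic. First I would use Proposition \ref{prop:3} to identify bases: $\Omega_0$ has basis $\{(v) : v \in V_G\}$, so $\dim \Omega_0 = |V_G|$; and $\Omega_1$ has basis given by all ordered pairs $(v_0,v_1)$ with $v_0 \neq v_1$ and $\{v_0,v_1\}$ contained in some hyperedge of $G$, i.e., by those pairs for which $v_0$ is adjacent to $v_1$ in $\overline{G}$. Since each undirected edge of $\overline{G}$ contributes exactly two such ordered pairs, I get $\dim \Omega_1 = 2|E_{\overline{G}}|$. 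One small sanity check here is that $\Omega_1 = \Lambda_1$, because $d_1(v_0,v_1) = (v_1)-(v_0)$ already lies in $\Lambda_0 = \mathrm{span}(V)$, so no $\partial$-invariance condition cuts down the dimension at level one.

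Next I would compute $\dim \mathrm{Im}(d_1)$. The key observation is that the subspace of $\Omega_0$ spanned by $\{(v_1)-(v_0) : v_0 \text{ adjacent to } v_1 \text{ in } \overline{G}\}$ coincides with the image of the ordinary graph boundary map for $\overline{G}$, because the two ordered pairs $(v_0,v_1)$ and $(v_1,v_0)$ map to negatives of each other and hence contribute the same one-dimensional subspace. The standard graph-theoretic argument (spanning trees per component, or the well-known identity $\dim \mathrm{Im}\, d_1^{\mathrm{graph}} = |V| - |C|$) then gives $\dim \mathrm{Im}(d_1) = |V_G| - |C_{\overline{G}}|$. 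Here I am using that $V_{\overline{G}} = V_G$ by construction of the essential graph.

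Applying the rank-nullity theorem to $d_1 : \Omega_1 \to \Omega_0$ yields
\begin{equation*}
\dim \ker(d_1) = 2|E_{\overline{G}}| - |V_G| + |C_{\overline{G}}|.
\end{equation*}
Since $H_1 = \ker(d_1)/\mathrm{Im}(d_2)$ and everything is over the field $k$, we obtain
\begin{equation*}
\dim H_1 = 2|E_{\overline{G}}| - |V_G| + |C_{\overline{G}}| - \dim(\mathrm{Im}\, d_2),
\end{equation*}
and therefore $H_1 \cong R^{2|E_{\overline{G}}| - |V_G| + |C_{\overline{G}}| - \dim(\mathrm{Im}\, d_2)}$ as $k$-vector spaces.

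The main obstacle I anticipate is the second step: carefully justifying that $\dim \mathrm{Im}(d_1)$ on the hypergraph complex really equals the rank of the reduced boundary map of $\overline{G}$. The subtlety is that $\Omega_1$ contains both $(v_0,v_1)$ and $(v_1,v_0)$ as independent basis elements, so one must argue that these redundancies only enlarge the kernel (they contribute to cycles of the form $(v_0,v_1)+(v_1,v_0)$) and do not enlarge the image. Once this identification with the underlying graph is made rigorous, the remaining arithmetic is routine rank-nullity, and the comment below Proposition \ref{prop:3} equating $G$ and $\overline{G}$ at the level of induced complexes makes this reduction legitimate.
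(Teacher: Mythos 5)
Your proof is correct and follows essentially the same route as the paper: both arguments hinge on the observation that the two orientations $(v_0,v_1)$ and $(v_1,v_0)$ of an edge of $\overline{G}$ have boundaries that are negatives of each other, which reduces the computation to the standard graph homology of $\overline{G}$ via $\dim\ker = 2|E_{\overline{G}}|-|V_G|+|C_{\overline{G}}|$ and then $\dim H_1 = \dim\ker(d_1)-\dim\mathrm{Im}(d_2)$. The only difference is presentational: the paper decomposes $\ker(d_1)$ directly as $\ker(d_1|_{W_1})\oplus\ker(d_1|_{W_2})$, where $W_1$ is spanned by the sums $e+e'$ (contributing $|E_{\overline{G}}|$) and $W_2$ by one orientation per edge (contributing the cycle-space dimension $|E_{\overline{G}}|-|V_{\overline{G}}|+|C_{\overline{G}}|$), whereas you compute $\dim\mathrm{Im}(d_1)=|V_G|-|C_{\overline{G}}|$ and apply rank--nullity --- the arithmetic is identical.
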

			
			
			\begin{proof}
				First, notice that for an edge ${v_0, v_1}$ in $\overline{G}$, two anchors sequences are induced, namely, $e=(v_0, v_1)$ and $e'=(v_1,v_0)$. Obviously then, $e+e'\in \text{ker}(d_1)$. We say $e$ and $e'$ are dual to each other. Let $v$ be an element in the first component of the hypergraph complex, then $v=\sum c_e e+c_{e'}e'=\sum (c_e-c_{e'})e+c_{e'}(e+e')$, where the sum runs over all edges of $\overline{G}$. By correctly writing the edges for simplicial complexes and graph homologies, we notice that the subspace $W_1$ generated by all $e+e'$ is disjointed from the subspace $W_2$ generated by $e$. Moreover, $v \in \text{ker}(d_1)$ if and only if $\sum (c_e-c_{e'})e \in \text{ker}(d_1)$. Therefore, $$\text{ker}(d_1) = \text{ker}(d_1|_{W_1})\bigoplus \text{ker}(d_1|_{W_2}).$$ By graph homology theory, 
				\begin{equation} \label{eq:2}
					\text{dim}(\text{ker}(d_1|_{W_2})) = |E_{\overline{G}}|-|V_{\overline{G}}|+|C_{\overline{G}}|,
				\end{equation}
				and clearly, $\text{dim}(\text{ker}(d_1|_{W_1}))=|E_{\overline{G}}|$. Hence, we reach the desired conclusion since $V_G=V_{\overline{G}}$.
			\end{proof}
			
			In case $R$ is a principal ideal domain, then proposition \ref{prop:3} can be restated as
			\begin{equation}
				\text{rank}(H_1) = 2|E_{\overline{G}}|-|V_G|+|C_{\overline{G}}|-\text{rank}(\text{Im}d_2).
			\end{equation}
			
			We obtain several consequences from the previous proposition. First, the previous proposition provides an upper limit for the rank of the first-degree homology. That is,
			\begin{equation}
				\text{rank}(H_1) \leq  2|E_{\overline{G}}|-|V_G|+|C_{\overline{G}}|.
			\end{equation}
			

			
			\begin{example}\label{exp:5}
				Consider the hypergraph in Example \ref{exp:3}. The essential graph shown in Figure \ref{fig:5} has six vertices, six edges, and two connected components. Thus, 
				\begin{equation}
					2|E_{\overline{G}}|-|V_{\overline{G}}|+|C_{\overline{G}}|=2\cdot 6-6+2 = 8.
				\end{equation} 
				The rank of the boundary map $d_2$ is also eight; hence, the rank (dimension) of the homology $H_1$ is zero, which coincides with Example \ref{exp:3}.
				
			\end{example}
			
			
			\subsection{Hypergraph Dirac operators}
			
			Given a hypergraph $G=(V, E$), we can construct its hypergraph complex $(\Omega_\bullet, \partial_\bullet)$ embedded in $(C_\bullet, \partial_\bullet)$ generated by $V$ sequences as before. By equipping $(C_\bullet, \partial_\bullet)$ with an orthonormal basis consisting of these sequences and restricting the resulted inner product on $(\Omega_\bullet, \partial_\bullet)$, we can define Dirac operators $(D_p)_{p\geq 0}$ on $(\Omega_\bullet, \partial_\bullet)$ which we call hypergraph Dirac operators. Unlike path Dirac operators on digraphs, hypergraph Dirac operators generally have large sizes when $G$ is large with complicated connections. This is evident because a single triangle induces three generators in the second-degree components. Therefore, they are difficult to express in this article's examples. Accordingly, we represent hypergraph Dirac matrices by images instead of the traditional matrix representations.
			
			\begin{example} \label{exp:3}
				In this example, we demonstrate the cases of hypergraphs. Generally, large hypergraphs induce complexes with nontrivial components for all degrees. Therefore, only the zeroth degree Dirac operator, denoted as $D_0$, is visually depicted in Figures \ref{HypergraphExample1:1}. Consider the hypergraph shown in Figure \ref{fig:3}.
				\begin{figure}[t!]
					\centering
					\includegraphics[width=.6\linewidth]{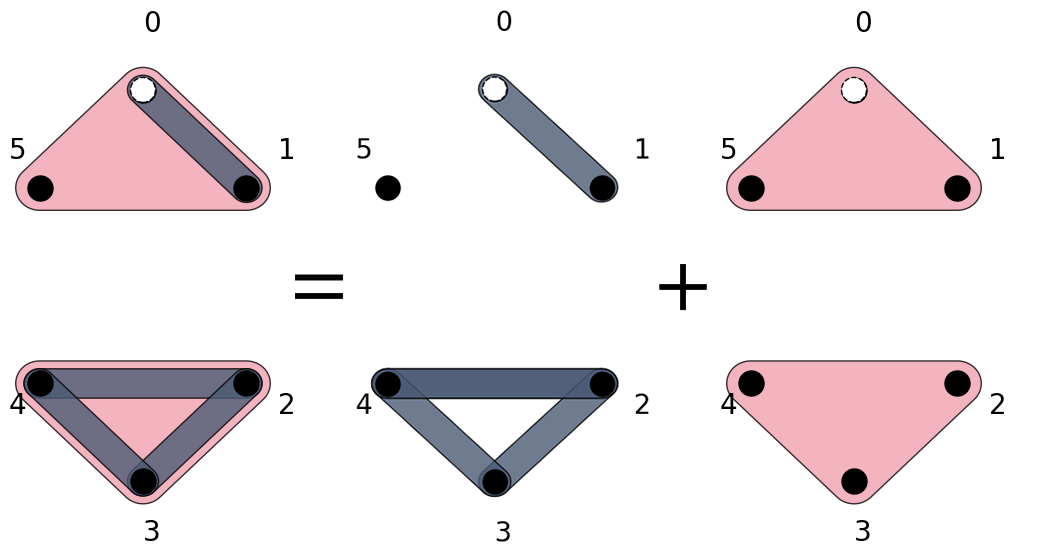}
					\caption{A hypergraph to demonstrate the Dirac operator's ability to capture features of hypergraphs.}
					\label{fig:3}
				\end{figure}
				We calculate the first three hypergraph complex components and the Dirac operators. Many anchor sequences exist within sequences of three or fewer lengths. The vertices span the first component, and all directed edges in the hypergraph span the second component. 
				Our algorithms indicate that the first four Betti numbers are as follows: $\beta_0=2$, $\beta_1=\beta_2=\beta_3=0$. Simultaneously, the nullities of the initial four Dirac operators are $\eta(D_0)=10$, $\eta(D_1)=12$, $\eta(D_2)=16$, and $\eta(D_3)=18$. Due to multiple edges, the dimensions of the hypergraph's complex components are significant. Therefore, the Laplacian and Dirac operator matrices are also quite large. Consequently, Table \ref{table:4} provides information on the eigenvalues and their multiplicities for the initial four hypergraph Dirac operators.
				
				
				\begin{table}[t] 
					\centering
					\begin{tabular}{ccc} 
						\hline
						n &   $\text{nullity}(D_n)$ & $\text{Spec}(D_n)$\\ [0.5ex] 
						\hline 
						& & \\
						0 &  10 & $0(\times 10)$, $\pm\sqrt{6}(\times 4)$,\\ [3.3ex] 
						1 & 12 & $0(\times 12)$, $\pm1(\times 2)$, $\pm\sqrt{6}(\times 4)$,\\[1.3ex] 
						&  & $\pm\sqrt{7}(\times 4)$, $\pm 3(\times 2)$\\[3.3ex] 
						2 &  16 & $0(\times 16)$, $\pm1(\times 2)$, $\pm2(\times 4)$, \\[1.3ex]
						&   &  $\pm\sqrt{6}(\times 4)$, $\pm\sqrt{7}(\times 4)$, $\pm 3(\times 2)$, \\[1.5ex]
						& &  $\pm 3.06(\times 6)$\\[3.3ex]
						3 & 18 & $0(\times 18)$, $\pm0.93(\times 2)$, $\pm1(\times 2)$,  \\[1.3ex]
						&   &  $\pm2(\times 4)$, $\pm\sqrt{6}(\times 4)$, $\pm\sqrt{7}(\times 10)$,  \\[1.5ex]
						& &  $\pm 2.89(\times 4)$, $\pm 3(\times 2)$, $\pm 3.06(\times 6)$,\\[1.5ex]
						& &  $\pm 3.24(\times 2)$\\[3.3ex]
						\hline
					\end{tabular}
					\caption{The spectrum of Dirac operators on the hypergraph in Figure \ref{fig:3}.}
					\label{table:4}
					
				\end{table}
				
				
				
				\begin{figure}[t]
					\centering
					\begin{subfigure}{.30\textwidth}
						\centering
						\includegraphics[width=.99\linewidth]{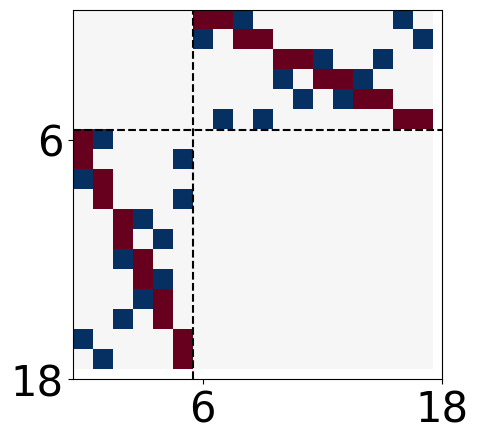}
						\caption{$D_0$}
						\label{HypergraphExample1:1}
					\end{subfigure}
					\caption{$D_0$ Dirac matrix visualized as an image, where blue = -1, white = 0, and red = 1.}
				\end{figure}
				
				
			\end{example}
			
			
			
			\section{Persistent path Dirac and persistent hypergraph Dirac}

			This section describes thoroughly the construction of persistent path Dirac; however, the same construction can be applied to hypergraphs. In topological data analysis, extracting information from data sets and point clouds across various scales may involve establishing a sequence of topological structures, resulting in a complex chain sequence. This procedure is commonly referred to as filtration. More formally, the filtration of a digraph can be defined in two ways:
			Firstly, it can be defined as a function $\phi$ that assigns a digraph for each real number in such a way that for every $n \leq m$, the vertices of $\phi(n)$ are a subset of the vertices of $\phi(m)$, and the edges of $\phi(n)$ are a subset of the edges in $\phi(m)$. That is, a filtration is a functor $\mathcal{P}:(\mathbb{R},\leq)\to \mathbf{Dig}$ from $(\mathbb{R},\leq)$ to the category of digraphs. For real numbers $a\leq b$, the $(a,b)$-persistent homology of  a persistent path homology is given by
			\begin{equation*}
				H_{p}^{a,b}(\mathcal{P};\mathbb{K})=\text{Im}(H_{p}(\mathcal{P}(a);\mathbb{R})\to H_{p}(\mathcal{P}(b);\mathbb{R})).
			\end{equation*}
			Alternatively, filtration can be seen as a sequence of digraphs. The same construction can be applied to hypergraphs. This filtration process leads to the establishment of embeddings of chain complexes. 
			
			\subsection{Persistent path/hypergraph   Dirac operators}
			
			Given an embedding of chain complexes $i_\bullet: (A_\bullet, d^A_\bullet) \longrightarrow (B_\bullet, d^B_\bullet)$, we compute an auxiliary complex $(C_\bullet, d^C_\bullet:=d^B_\bullet|_{C_\bullet})$ defined by
			\begin{equation}
				C_n = (d^B_n)^{-1}(i_{n-1}(A_{n-1}))=\{v\in B_n|d_n^B(v)\in i_{n-1}(A_{n-1})\}
			\end{equation}
			for $n\in \mathbb{N}_0$. 
			\begin{equation}
				\xymatrix@C=3em{
					\cdots \ar[r] &A_{n}\ar[r]^{d^{A}_{n}} \ar[d] & A_{n-1} \ar[r]^{d^{A}_{n-1}}\ar[d]& \cdots \ar[r]^{d^{A}_{2}}\ar[d]& A_{1} \ar[r]^{d^{A}_{1}}\ar[d]& A_0 \ar[r]\ar[d]& 0\\
					\cdots \ar[r]&  C_{n} \ar[r]^{d^{C}_{n}}\ar[ur]^{d^{B,A}_{n}}\ar@{_{(}.>}[d] & C_{n-1}\ar[r]^{d^{C}_{n-1}}\ar[ur]^{d^{B,A}_{n-1}}\ar@{_{(}.>}[d] & \cdots \ar[r]^{d^{C}_{2}} \ar[ur]^{d^{B,A}_{2}}\ar@{_{(}.>}[d] & C_1\ar[r]^{d^{C}_{1}} \ar[ur]^{d^{B,A}_{1}}\ar@{_{(}.>}[d] & C_0 \ar@{_{(}.>}[d] \ar[r] & 0 \\
					\cdots \ar[r] & B_{n}\ar[r]^{d^{B}_{n}} & B_{n-1} \ar[r]^{d^{B}_{n-1}}& \cdots \ar[r]^{d^{B}_{2}}& B_{1} \ar[r]^{d^{B}_{1}}& B_0 \ar[r]& 0 	
				}
			\end{equation}
			
			The persistent path $n$-th Laplacian operator \cite{wang2022persistent} is defined to be
			\begin{equation}
				\Delta_n^{B,A} = (d^A_n)^*d^A_n+d^{B,A}_{n+1}(d^{B,A}_{n+1})^*,
			\end{equation}
			and the persistent path $n$-th Dirac operator $D^{B,A}_n $ is defined as the $n$-th Dirac operator of the auxiliary complex shown above. Notice that the persistent path Laplacian operator $\Delta_n^{B,A}$ corresponds to the $n$-th Laplacian operator of the chain complex
			\begin{equation}
				\xymatrix@C=3em{
					\cdots \ar[r] & C_{n+1}	\ar[r]^{d^{B,A}_{n+1}} &A_{n}\ar[r]^{d^{A}_{n}}  & A_{n-1} \ar[r]^{d^{A}_{n-1}}& \cdots \ar[r]^{d^{A}_{1}}& A_0 \ar[r]& 0.
				}
			\end{equation}
			
			Since $i_\bullet$ is an embedding, then $A_n\subseteq C_n$, and thus over principal ideal domains, rank$(A_n)\leq \text{ rank}(C_n)$, for all $n\in \mathbb{N}_0$. Moreover, $ \text{Im}(d_{n+1}^{A})\subseteq \text{Im}(d_{n+1}^{B,A})\subseteq \text{ker}(d^A_n)$, and hence 
			\begin{equation}
				H_n(A_\bullet)=\text{ker}(d^A_n)/\text{Im}(d_{n+1}^{A}) \longrightarrow  \text{ker}(d^A_n)/\text{Im}(d_{n+1}^{B,A})\longrightarrow 0
			\end{equation}
			and consequently, $\eta(\Delta^A_n)\geq  \eta(\Delta^{B,A}_n)$. We also observe that $\text{ker}(d^A_n)\subseteq \text{ker}(d^C) \subseteq \text{ker}(d^B_n)$, and $\text{Im}(d_{n+1}^{C}) = \text{Im}(d_{n+1}^{B,A})$ . Therefore, 
			\begin{equation}
				0 \longrightarrow \text{ker}(d^A_n)/\text{Im}(d_{n+1}^{B,A}) \longrightarrow H_n(C_\bullet)
			\end{equation}
			and subsequently, $\eta(\Delta^C_n)\geq  \eta(\Delta^{B,A}_n)$.
			
			This section explores the practical applications and examples of the filtration technique. We also demonstrate the features of the persistent path Dirac and persistent hypergraph Dirac operators captured.
			Consider filtration denoted as $\phi$ and real numbers or integers $n \leq m$. Let $H^n$ and $H^m$ represent the corresponding hypergraphs or digraphs under the filtration $\phi$, and $H^n_p$ and $H^m_p$ be the $p$-th degree components of their associated chain complexes. We compute the first-degree persistent path Dirac operators, denoted as $D_1^{(n,m)}$, and analyze selected characteristic features for demonstration purposes. According to the definitions of the operators and the diagram below,
			\begin{equation}
				\xymatrix@C=3em{
					\cdots \ar[r]^{d^{H^n}_{2}}\ar[d]& H^n_{1} \ar[r]^{d^{H^n}_{1}}\ar[d]& H^n_0 \ar[r]\ar[d]& 0\\
					\cdots \ar[r]^{d^{C}_{2}} \ar[ur]^{d^{H^m,H^n}_{2}}\ar@{_{(}.>}[d] & C_1\ar[r]^{d^{C}_{1}} \ar[ur]^{d^{H^m,H^n}_{1}}\ar@{_{(}.>}[d] & C_0 \ar@{_{(}.>}[d] \ar[r] & 0 \\
					\cdots \ar[r]^{d^{H^m}_{2}}& H^m_{1} \ar[r]^{d^{H^m}_{1}}& H^m_0 \ar[r]& 0 	
				}
			\end{equation}
			we can write
			\begin{equation}
				\label{eqn:dirac}
				\renewcommand{\arraystretch}{1.5}
				D_1^{(n,m)} = 
				\begin{bmatrix}
					\textbf{0}_{n_0\times n_0}    & \mathbf{d^{H^m}_1}    & \textbf{0}_{n_0\times n_2}       \\
					
					(\mathbf{d^{H^m}_1})^*  & \textbf{0}_{n_1\times n_1}      & \mathbf{d_2^C}      \\
					
					\textbf{0}_{n_2\times n_0}      & (\mathbf{d_2^C})^*  & \textbf{0}_{n_2\times n_2}          
				\end{bmatrix}.
			\end{equation}
			
			Equation \ref{eq:4} states that
			\begin{align}
				\eta(D^{(n,m)}_1)&= \beta^{(n,m)}_0+\beta^{(n,m)}_1 +\eta((\mathbf{d_2^C})^* \mathbf{d_2^C} )\\
				&=\beta^{(n,m)}_0+\beta^{(n,m)}_1 + \eta( \mathbf{d_2^C} ), \notag
			\end{align}
			where $\beta^{(n,m)}_0$ and $\beta^{(n,m)}_1$ are the first two Betti numbers of the auxiliary complex $C_\bullet$. In various situations, $H^n_2=H^m_2=C_2=0$, and hence
			\begin{equation}
				\label{eqn:dirac}
				\renewcommand{\arraystretch}{1.5}
				D_1^{(n,m)} = 
				\begin{bmatrix}
					\textbf{0}_{n_0\times n_0}    & \mathbf{d^{H_1^m}_1}    & \textbf{0}_{n_0\times n_2}       \\
					
					(\mathbf{d^{H_1^m}_1})^*  & \textbf{0}_{n_1\times n_1}      & \textbf{0}_{n_1\times n_2}        \\
					
					\textbf{0}_{n_2\times n_0}      & \textbf{0}_{n_2\times n_1}     & \textbf{0}_{n_2\times n_2}          
				\end{bmatrix}.
			\end{equation}
			
			$\beta^{(n,m)}_0$ is the same as the zeroth Betti number of $H^m_\bullet$, and when $C_2=0$, then $\beta^{(n,m)}_1$ is the same as the first Betti number of $H^m_\bullet$. That is, $D_1^{(n,m)}$ might have the same features for many values of $n$ for a fixed $m$. We consider three features, namely, the nullity of the operators, the mean of their positive eigenvalues defined as
			\begin{equation}
				\overline{\lambda} = \frac{1}{n}\sum_{1}^{n}\lambda_i
			\end{equation}
			that runs over all the positive eigenvalues, where $n$ is the number of the positive eigenvalues counting multiplicities; otherwise, it assigns zero. Another feature of interest is the generalized mean of the positive eigenvalues defined as 
			\begin{equation}
				\hat{\lambda} = \frac{1}{n}\sum_{1}^{n}|\lambda_i-\overline{\lambda}|
			\end{equation}
			that runs over all positive eigenvalues and assigns otherwise zero. This is useful, especially when the eigenvalues are not equal to their mean. The generalized mean is utilized in the context of mean absolute deviation to remove the direct reliance of the aggregate absolute deviation of each eigenvalue from the mean eigenvalue on the size of the molecule, as explained in \cite{consonni2008new}, and to assist in the development of quantitative structural models for a diverse range of molecules \cite{alhevaz2020generalized, nguyen2019agl}. For a linear operator $T$, we will denote its mean by $\overline{\lambda}(T)$, and its generalized mean by $\hat{\lambda}(T)$. Other quantities such as the minimum of the nonzero positive eigenvalues, the maximum among them, the sum of positive ones, and the standard deviation of the positive ones are used in the literature as well \cite{wang2022persistent, chen2023persistent,wee2023persistent}.
			
			In the following examples, we calculate the features discussed at the beginning of this section captured by persistent path Dirac operators to demonstrate their capabilities.
			
			\begin{figure}[t!]
				\centering
				\begin{subfigure}{.20\textwidth}
					\centering
					\includegraphics[width=.99\linewidth]{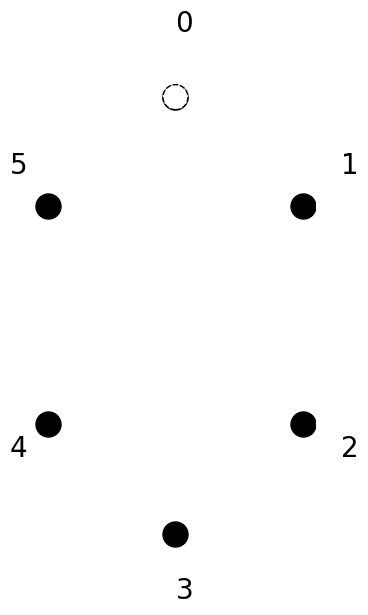}
					\caption{$H^1$}
				\end{subfigure}%
				\begin{subfigure}{.20\textwidth}
					\centering
					\includegraphics[width=.99\linewidth]{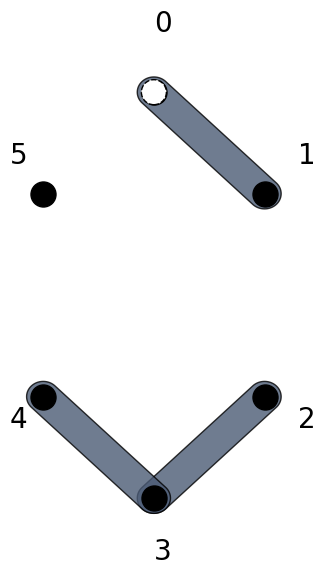}
					\caption{$H^2$}
				\end{subfigure}%
				\begin{subfigure}{.20\textwidth}
					\centering
					\includegraphics[width=.99\linewidth]{H3}
					\caption{$H^3$}
				\end{subfigure}%
				\begin{subfigure}{.20\textwidth}
					\centering
					\includegraphics[width=.99\linewidth]{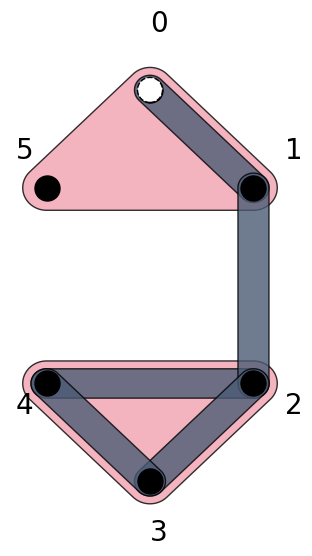}
					\caption{$H^4$}
				\end{subfigure}%
				\begin{subfigure}{.20\textwidth}
					\centering
					\includegraphics[width=.99\linewidth]{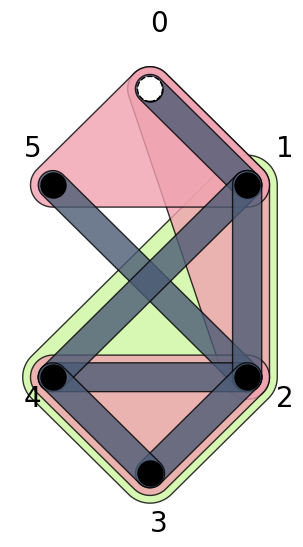}
					\caption{$H^5$}
				\end{subfigure}%
				\caption{Filtration of $H^5$.}
				\label{hypergraph_filtration:1}
			\end{figure}
			
			
			\begin{example}\label{exp:9}
				This example considers a filtration of a hypergraph shown in Figure \ref{hypergraph_filtration:1}. The more incident edges a hypergraph has, the larger the dimensions of the higher-degree components of the hypergraph complex. This is evident by Proposition \ref{prop:3}, since the hypergraph will have many generators of the form in Figure \ref{fig:1e}. Therefore, it is difficult to discuss and verify values. However, we use our algorithms to compute the first-degree persistent hypergraph Dirac operators $D_1^{(n,m)}$, and compute their nulities $\eta(D_1^{(n,m)})$, their means $\overline{\lambda}(D_1^{(n,m)})$, and generalized means $\hat{\lambda}(D_1^{(n,m)})$. Their values are shown in Figure \ref{HypergraphExample2}.

				
				\begin{figure}[t!]
					\centering
					\begin{subfigure}{.33\textwidth}
						\centering
						\includegraphics[width=.99\linewidth]{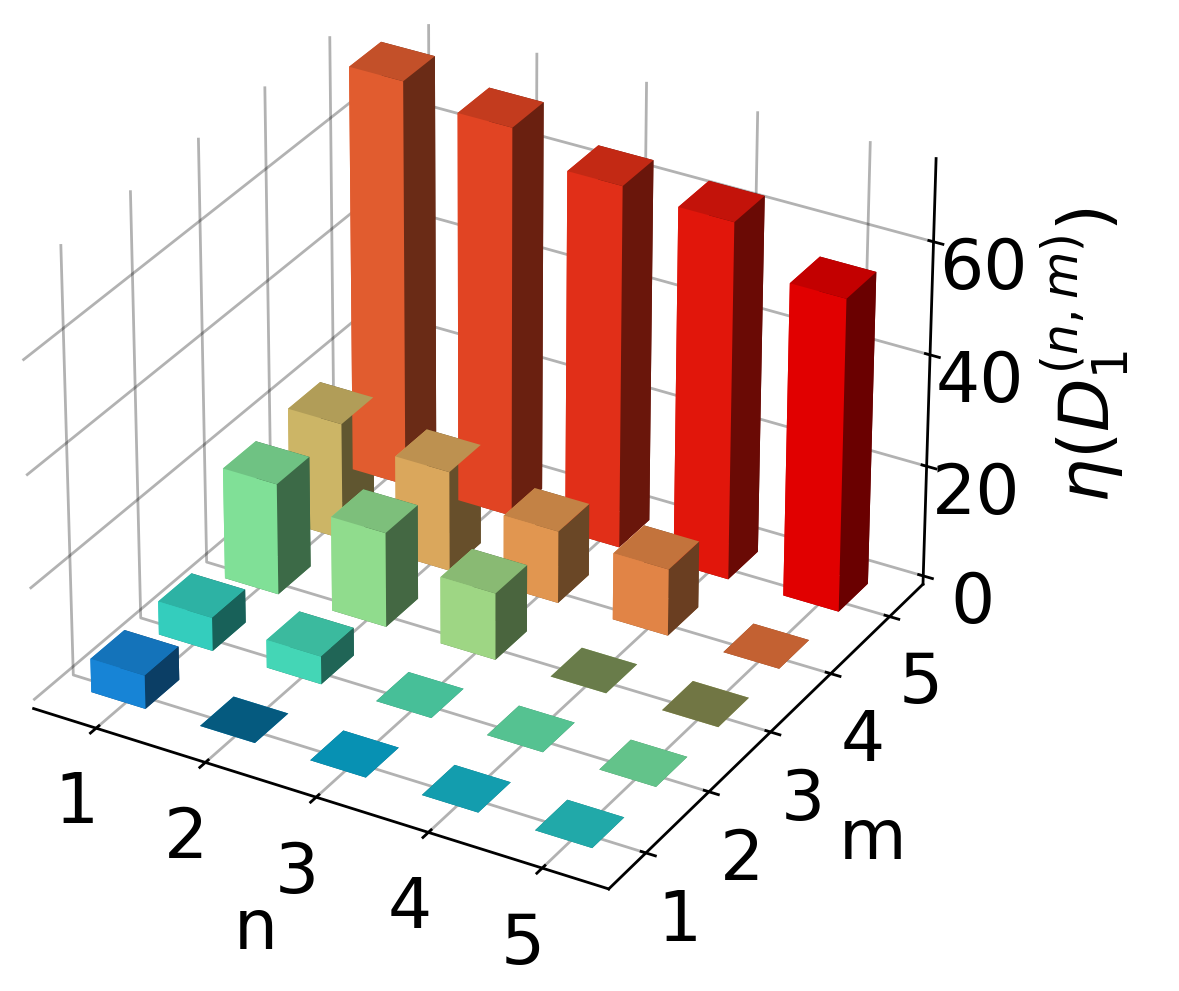}
						\caption{}
						\label{HypergraphExample2:1}
					\end{subfigure}%
					\begin{subfigure}{.33\textwidth}
						\centering
						\includegraphics[width=.99\linewidth]{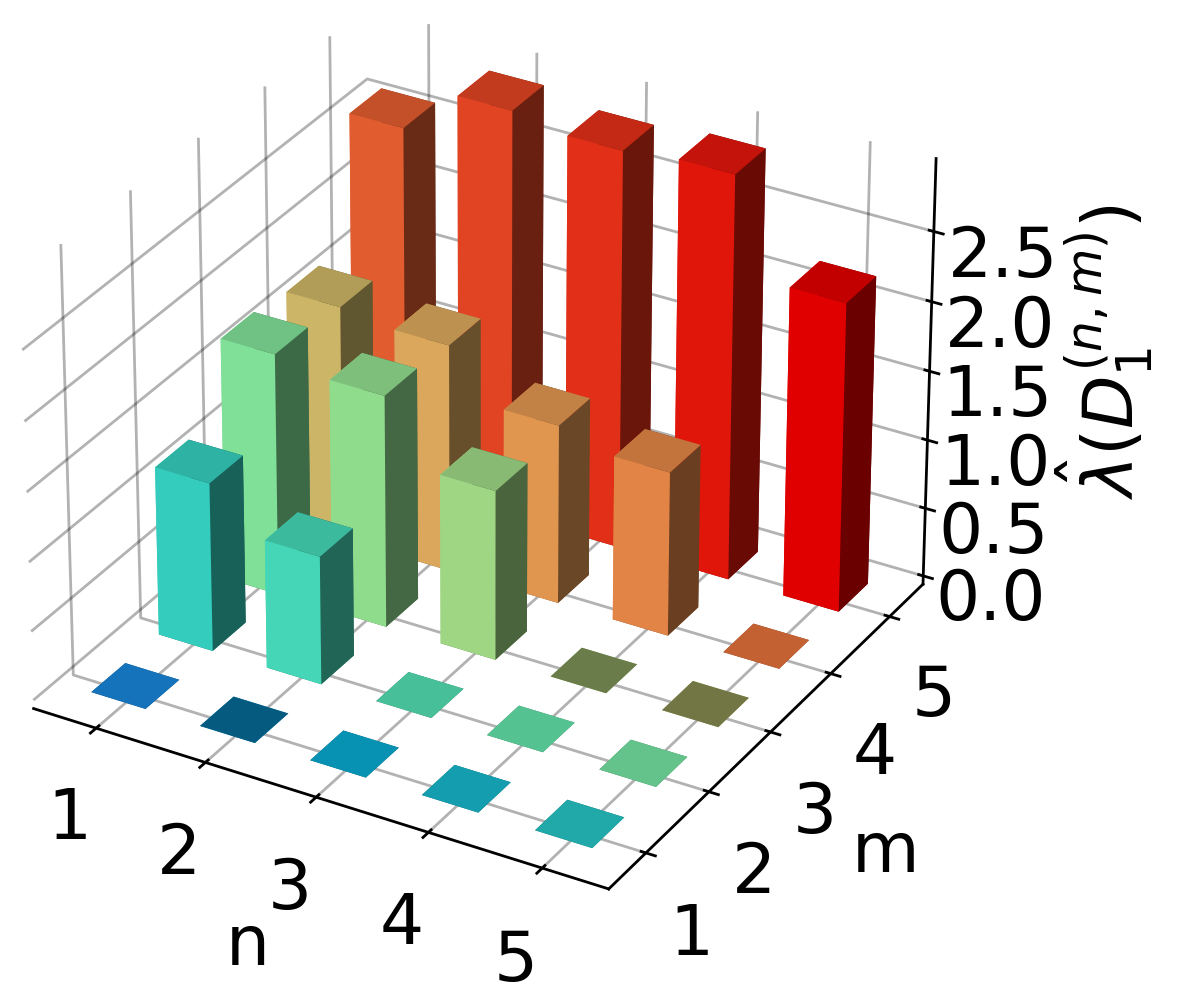}
						\caption{}
						\label{HypergraphExample2:2}
					\end{subfigure}%
					\begin{subfigure}{.33\textwidth}
						\centering
						\includegraphics[width=.99\linewidth]{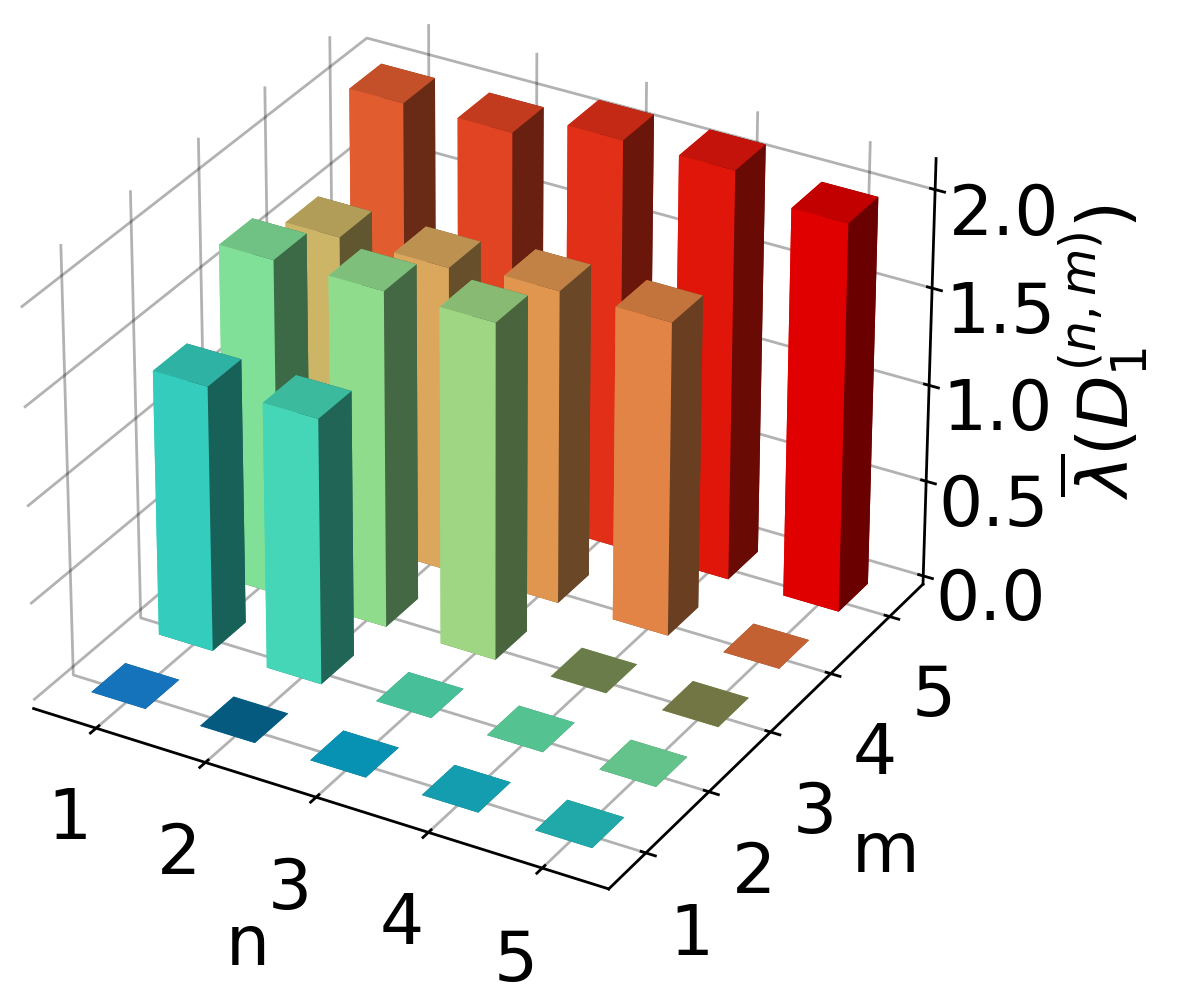}
						\caption{}
						\label{HypergraphExample2:3}
					\end{subfigure}
					
					\begin{subfigure}{.33\textwidth}
						\centering
						\includegraphics[width=.99\linewidth]{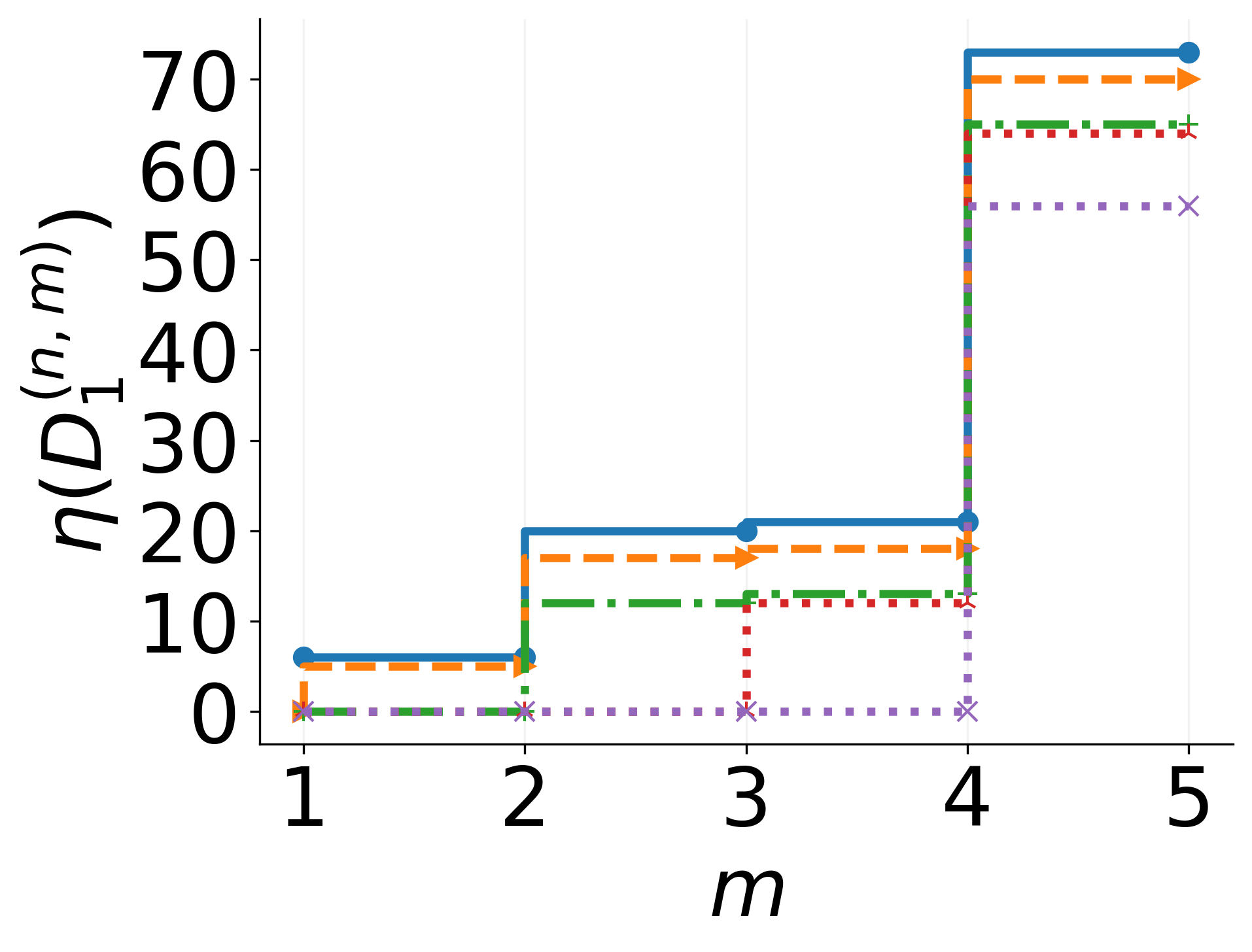}
						\caption{}
						\label{HypergraphExample2:4}
					\end{subfigure}%
					\begin{subfigure}{.33\textwidth}
						\centering
						\includegraphics[width=.99\linewidth]{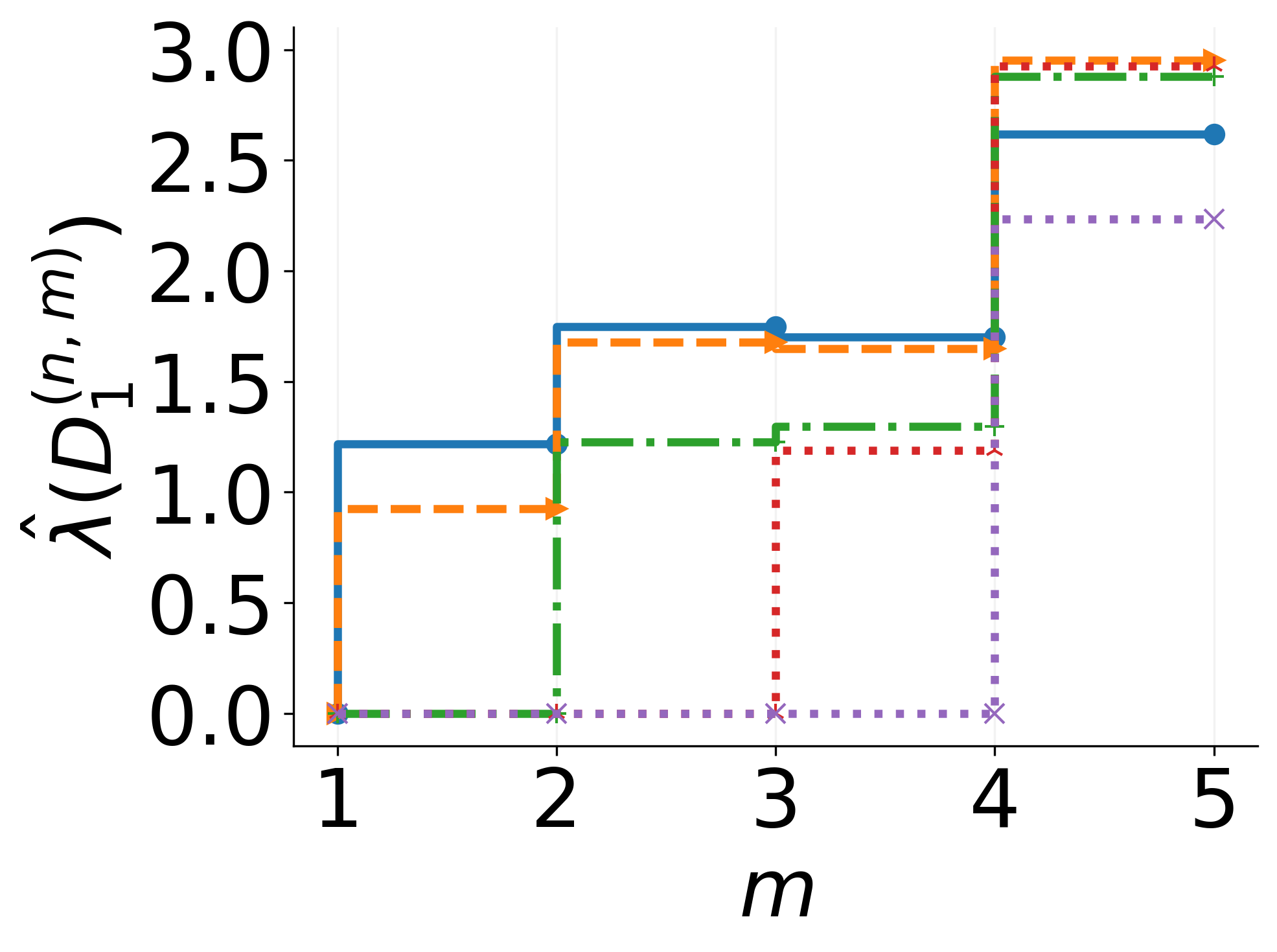}
						\caption{}
						\label{HypergraphExample2:5}
					\end{subfigure}%
					\begin{subfigure}{.33\textwidth}
						\centering
						\includegraphics[width=.99\linewidth]{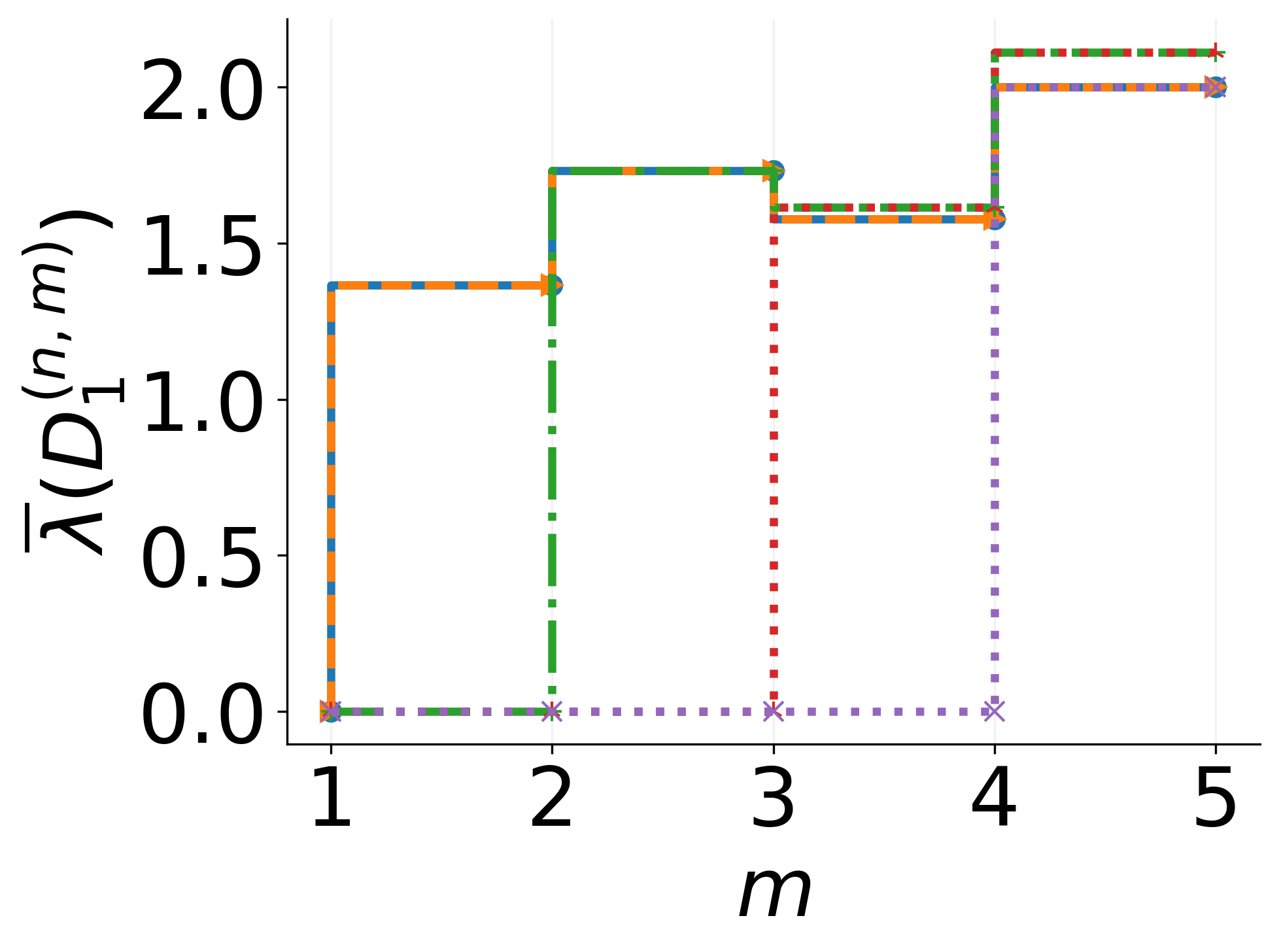}
						\caption{}
						\label{HypergraphExample2:6}
					\end{subfigure}
					
					\begin{subfigure}{.70\textwidth}
						\centering
						\includegraphics[width=.99\linewidth]{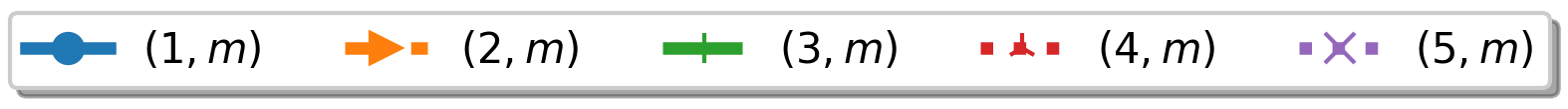}
					\end{subfigure}		
					
					\caption{Captured features by persistent hypergraph Dirac operators.  {(A)}, {(B)} and {(C)} show $\eta(D_1^{(n,m)\\
						})$, $\hat{\lambda}(D_1^{(n,m)})$ and $\overline{\lambda}(D_1^{(n,m)})$ of the filtration in Figure \ref{hypergraph_filtration:1} respectively. {(D)}, {(E)}, and {(F)} show their projections on the $mz$-plane.}
					\label{HypergraphExample2}
				\end{figure}
				
				
			\end{example}
			
			
			
			\begin{example} \label{exp:7}
				In this example, we showcase how persistent path Dirac operators exhibit sensitivity to different filtrations. We examine two distinct filtrations of a directed graph, depicted in Figure \ref{digraph_filtration:1}. Figure \ref{digraph_filtration_features:1} displays distinctive characteristics captured by the persistent path Dirac operators for both filtrations, including the nullities of the operators, generalized mean, and mean. It becomes evident that these different filtrations result in significantly varied captured features, underscoring the sensitivity of persistent path Dirac operators. Further clarification of the values in Figure \ref{digraph_filtration_features:2a} will be provided. There are only the vertices for $(n,m)=(1,1)$. 
				
				\begin{figure}[t!]
					\centering
					\begin{subfigure}{.17\textwidth}
						\centering
						\includegraphics[width=.99\linewidth]{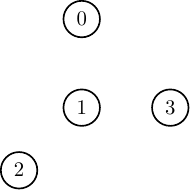}
						\caption{}
					\end{subfigure}%
					\hspace{0.2cm}
					\begin{subfigure}{.17\textwidth}
						\centering
						\includegraphics[width=.99\linewidth]{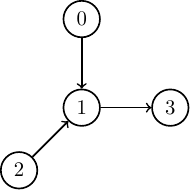}
						\caption{}
					\end{subfigure}%
					\hspace{0.2cm}
					\begin{subfigure}{.17\textwidth}
						\centering
						\includegraphics[width=.99\linewidth]{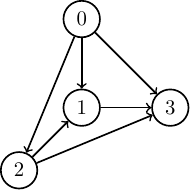}
						\caption{} 
					\end{subfigure}%
					\hspace{0.2cm}
					\begin{subfigure}{.17\textwidth}
						\centering
						\includegraphics[width=.99\linewidth]{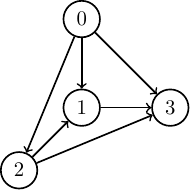}
						\caption{} 
					\end{subfigure}%
					\hspace{0.2cm}
					\begin{subfigure}{.17\textwidth}
						\centering
						\includegraphics[width=.99\linewidth]{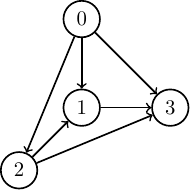}
						\caption{} 
					\end{subfigure}%
					
					\begin{subfigure}{.17\textwidth}
						\centering
						\includegraphics[width=.99\linewidth]{graph9.pdf}
						\caption{}
					\end{subfigure}%
					\hspace{0.2cm}
					\begin{subfigure}{.17\textwidth}
						\centering
						\includegraphics[width=.99\linewidth]{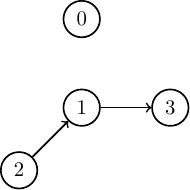}
						\caption{}
					\end{subfigure}%
					\hspace{0.2cm}
					\begin{subfigure}{.17\textwidth}
						\centering
						\includegraphics[width=.99\linewidth]{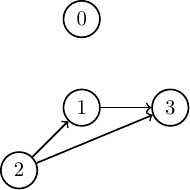}
						\caption{}
					\end{subfigure}%
					\hspace{0.2cm}
					\begin{subfigure}{.17\textwidth}
						\centering
						\includegraphics[width=.99\linewidth]{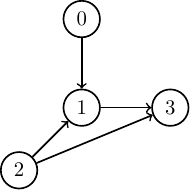}
						\caption{}
					\end{subfigure}%
					\hspace{0.2cm}
					\begin{subfigure}{.17\textwidth}
						\centering
						\includegraphics[width=.99\linewidth]{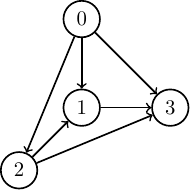}
						\caption{}
					\end{subfigure}%
					\caption{Two filtrations of the same digraph to demonstrate the sensitivity of persistent path Dirac operators to filtrations. { (A)}-{(E)} is the first filtration, and {(F)}-{(J)} is the second one.}
					\label{digraph_filtration:1}
				\end{figure}
				
				
				As a result, all the operators yield a value of zero, and consequently, we have $\beta^{(1,1)}_0=4$, $\beta^{(1,1)}_1=0$, leading to $\eta(D_1^{(1,1)})=4+0+0=4$.
				Moving forward, when two edges are added, it is observed that their images in the space formed by the vertices are linearly independent. Consequently, we have $\beta^{(1,2)}_0=4-2=2$, $\beta^{(1,2)}_1=0$, resulting in a second-degree component of the persistent complex with zero dimension. Therefore, $\eta(D_1^{(1,2)})=2+0+0=2$.
				Now, when $(n,m)=(1,3)$, a triangle, essentially a cycle, emerges when considering the induced hypergraph. According to Proposition \ref{prop:5} and Equation \ref{eq:3}, the second-degree component of the persistent complex is generated by this triangle. Consequently, we have $\beta^{(1,3)}_0=2$, $\beta^{(1,3)}_1=1$, and $\eta(\mathbf{d_2^C})=0$. Following equation \ref{eq:2}, this leads to $\eta(D_1^{(1,3)})=2+1+0=3$.
				Three adjacent triangles are formed when all edges are added, resulting in $\eta(\mathbf{d_2^C})=1$. The images of these three triangles are linearly independent, and with six edges in total, we have $\beta^{(1,5)}_1=6-3=3$. Consequently, $\eta(D_1^{(1,5)})=1+3+1=5$.
				
				\begin{figure}[t!]
					\centering
					\begin{subfigure}{.32\textwidth}
						\centering
						\includegraphics[width=.99\linewidth]{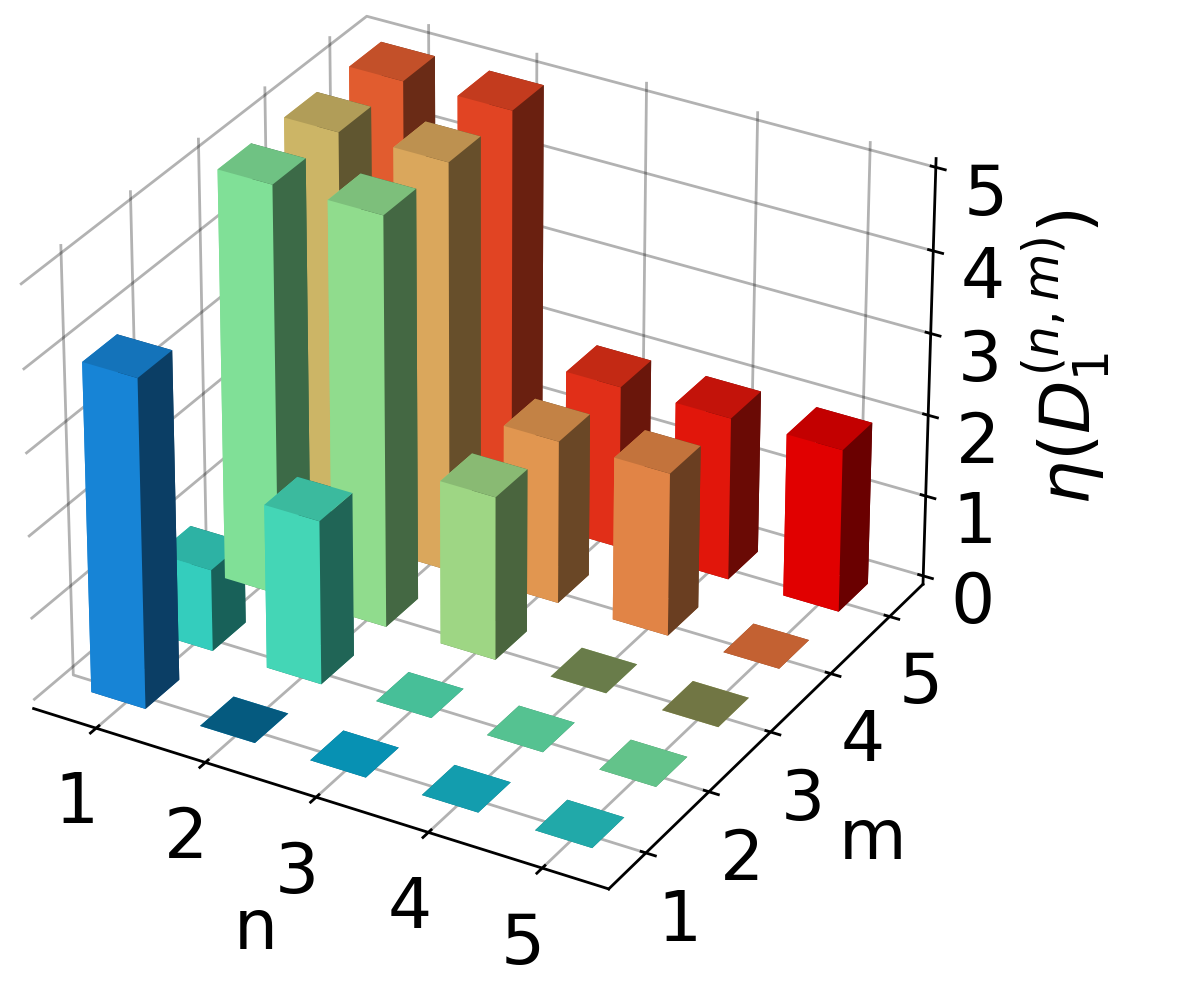}
						\caption{}
						\label{digraph_filtration_features:1a}
					\end{subfigure}%
					\hspace{0.1cm}
					\begin{subfigure}{.32\textwidth}
						\centering
						\includegraphics[width=.99\linewidth]{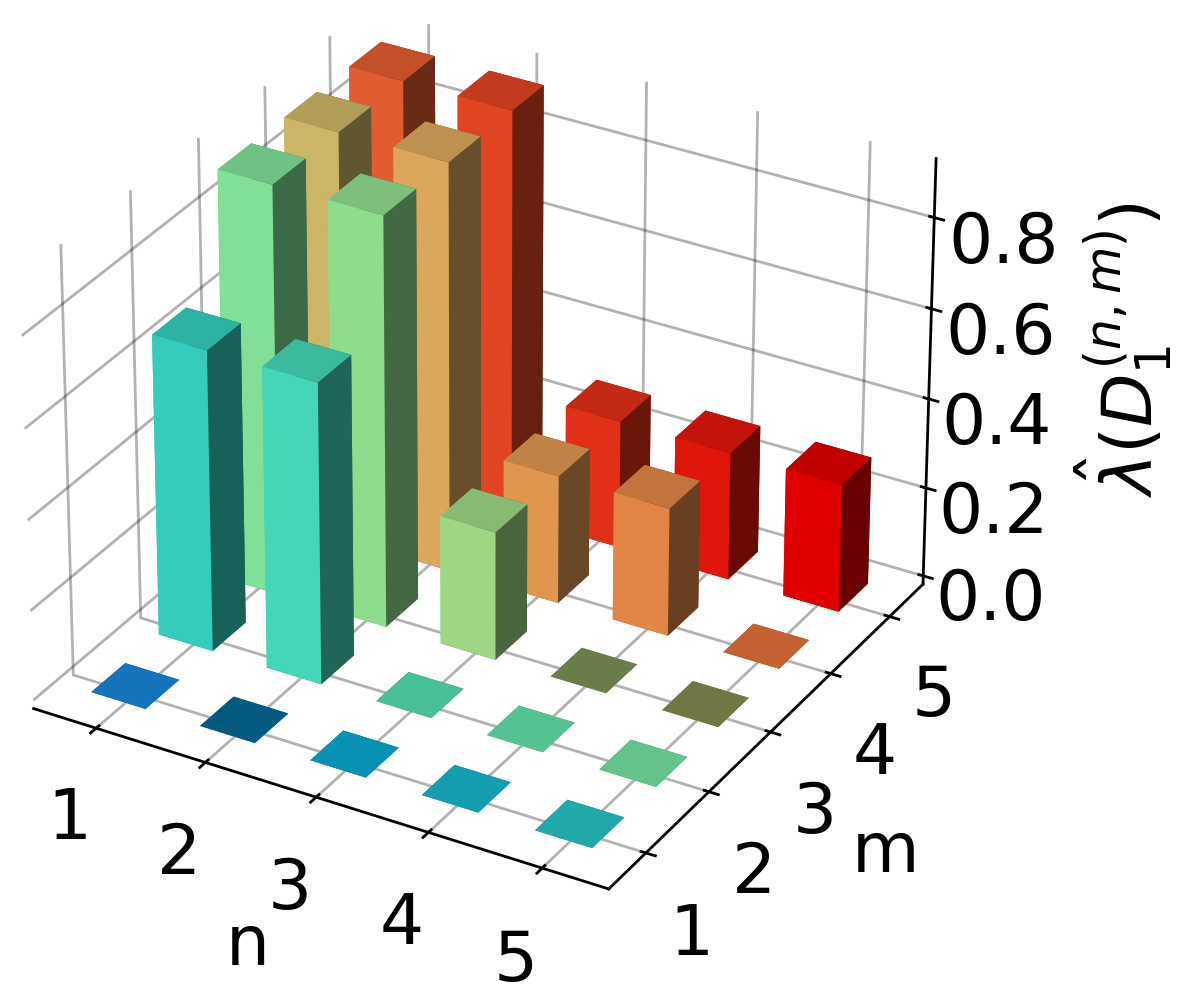}
						\caption{}
						\label{digraph_filtration_features:1b}
					\end{subfigure}%
					\hspace{0.1cm}
					\begin{subfigure}{.32\textwidth}
						\centering
						\includegraphics[width=.99\linewidth]{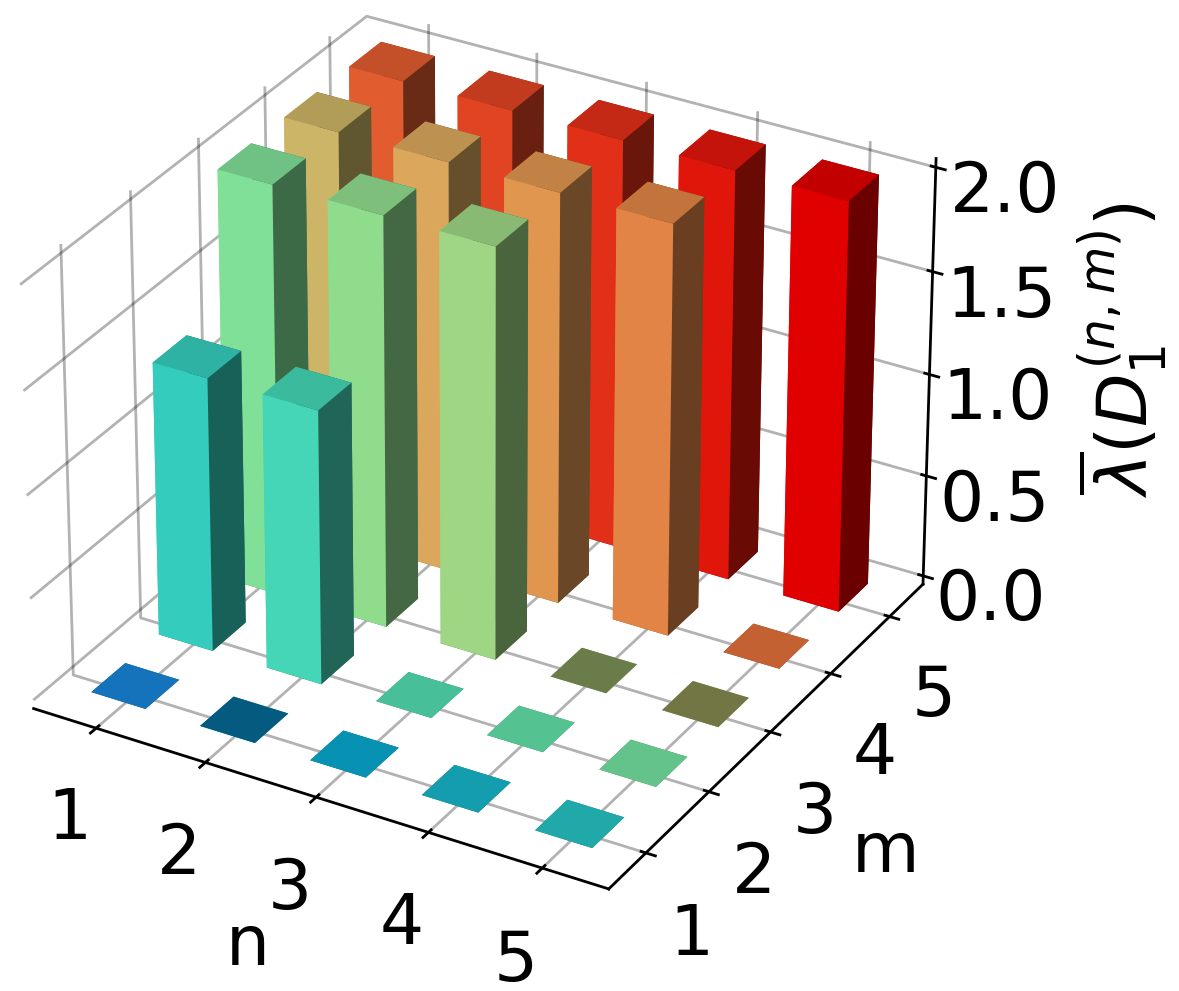}
						\caption{}
						\label{digraph_filtration_features:1c}
					\end{subfigure}
					
					\begin{subfigure}{.32\textwidth}
						\centering
						\includegraphics[width=.99\linewidth]{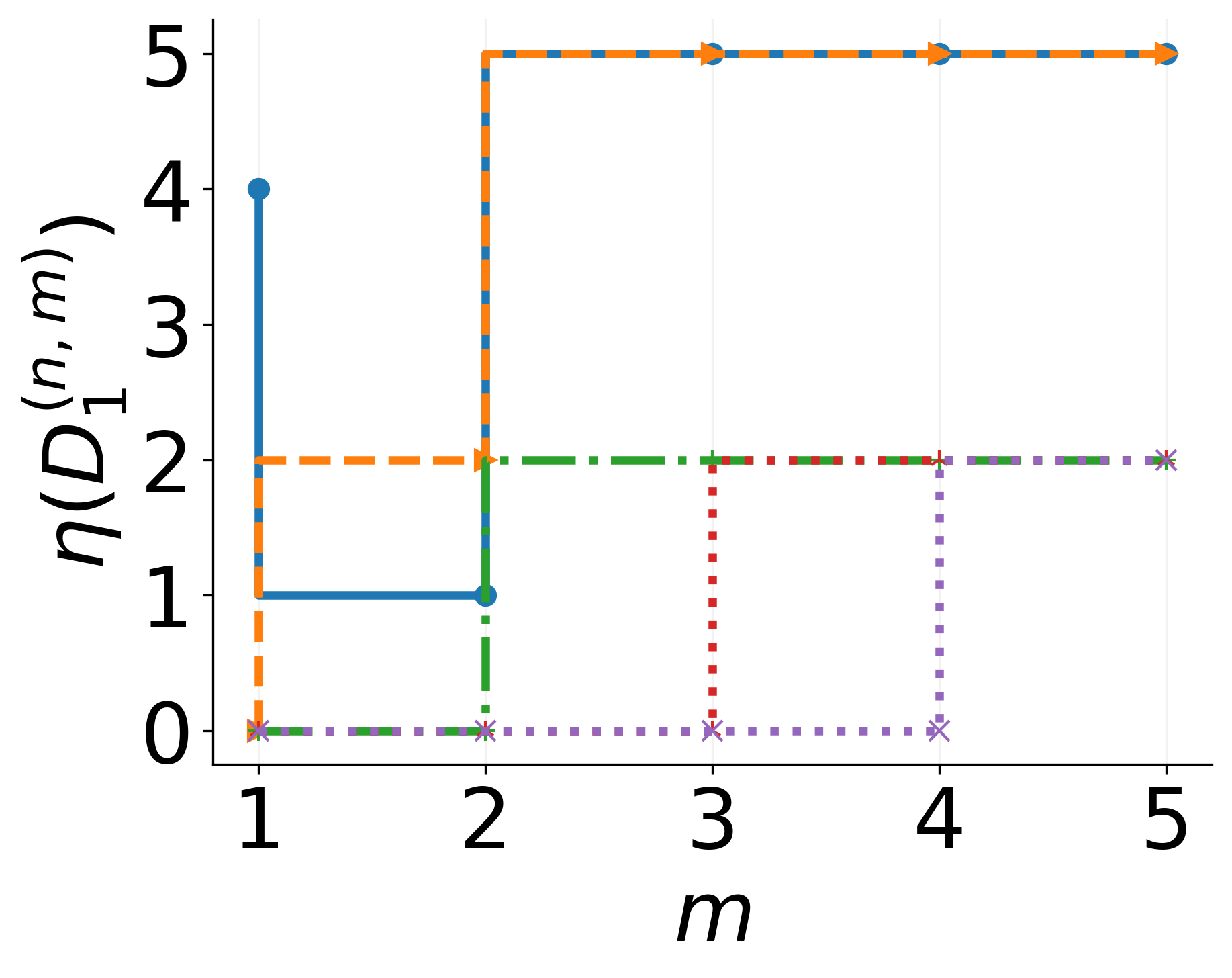}
						\caption{}
						\label{digraph_filtration_features:1d}
					\end{subfigure}%
					\hspace{0.1cm}
					\begin{subfigure}{.32\textwidth}
						\centering
						\includegraphics[width=.99\linewidth]{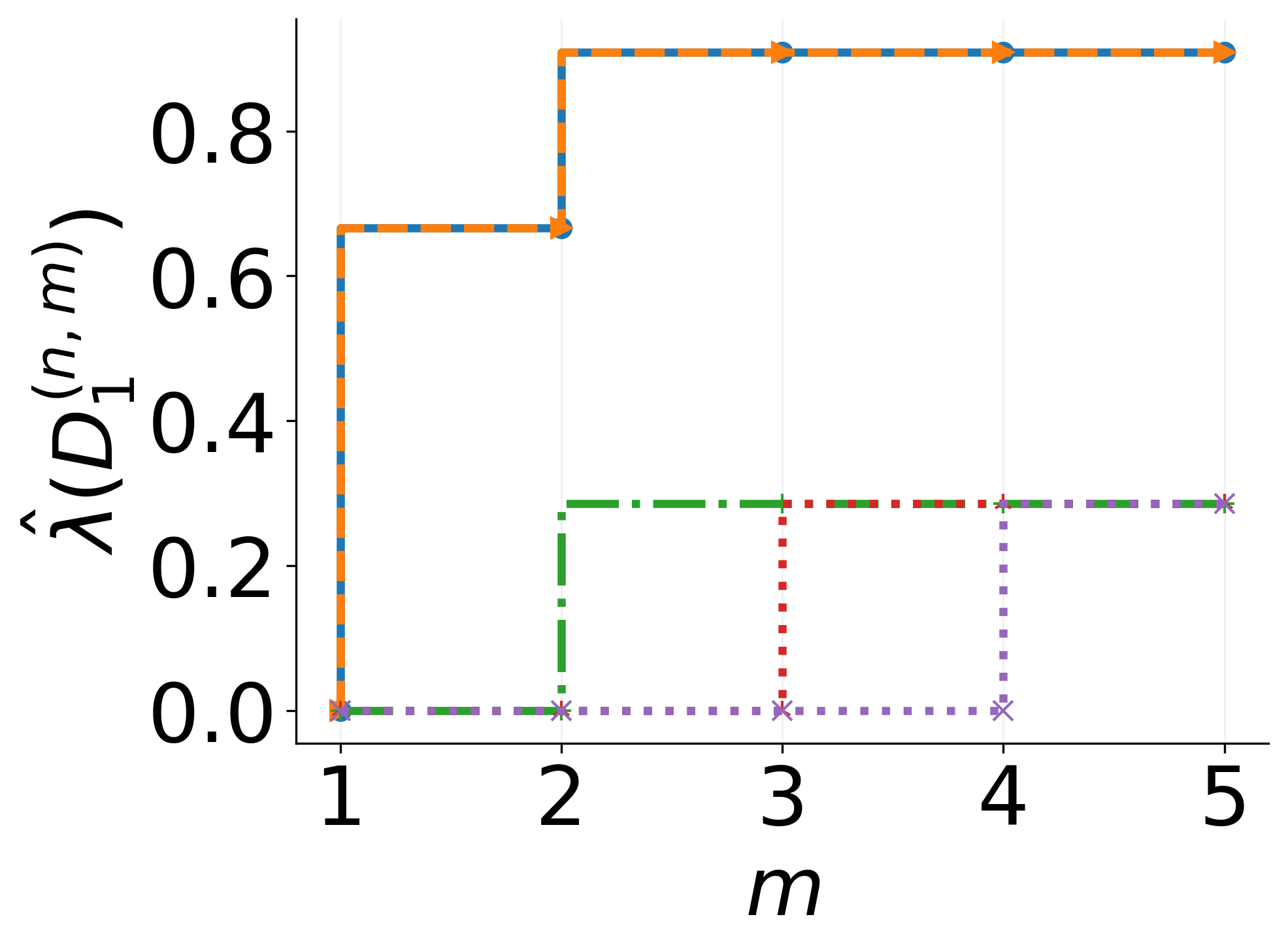}
						\caption{}
						\label{digraph_filtration_features:1e}
					\end{subfigure}%
					\hspace{0.1cm}
					\begin{subfigure}{.32\textwidth}
						\centering
						\includegraphics[width=.99\linewidth]{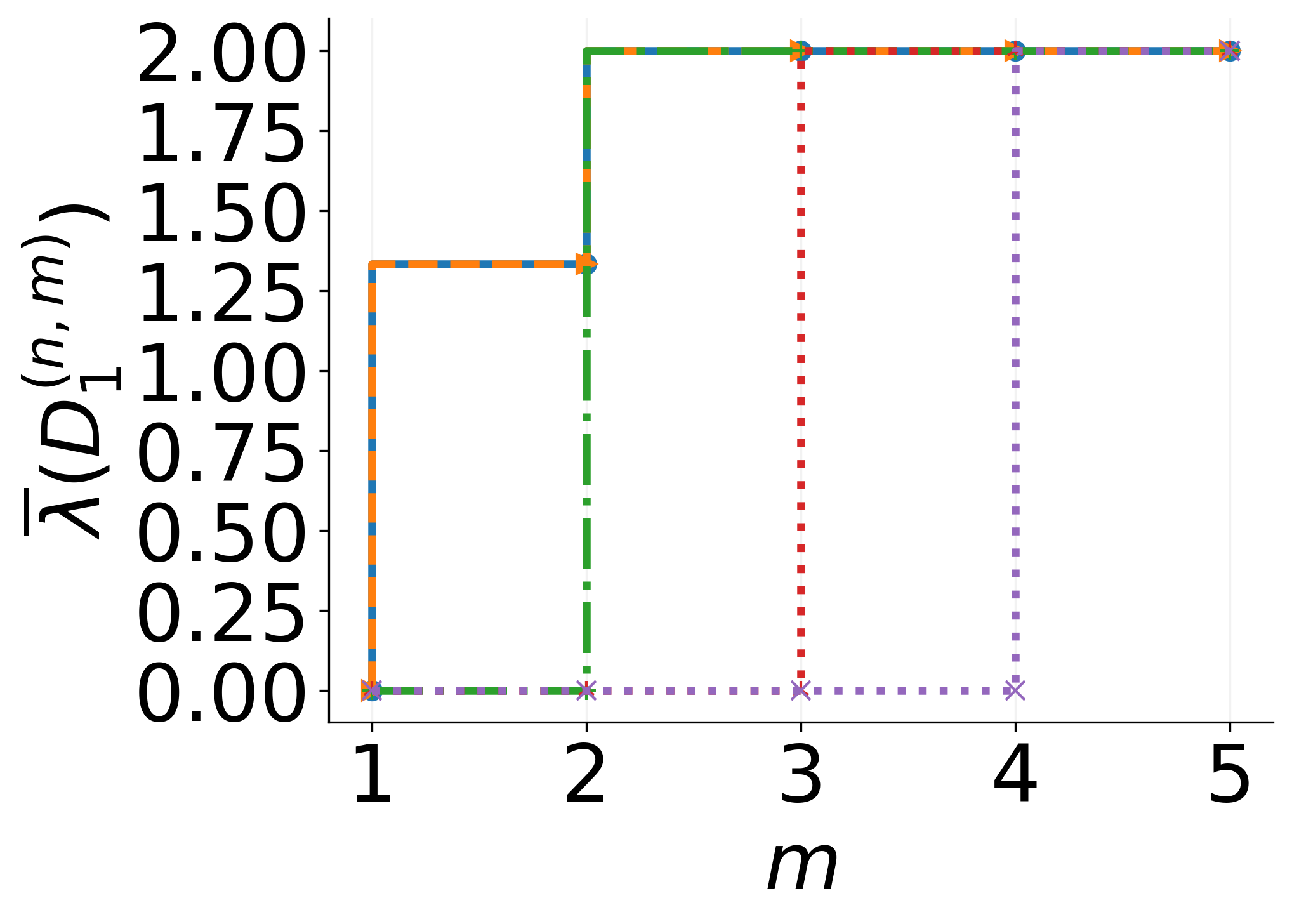}
						\caption{}
						\label{digraph_filtration_features:1f}
					\end{subfigure}
					
					\begin{subfigure}{.70\textwidth}
						\centering
						\includegraphics[width=.99\linewidth]{Legends}
					\end{subfigure}
					
					\caption{Captured features by persistent path Dirac operators induced by the filtrations in Figure \ref{digraph_filtration:1}. {(A)}, {(B)} and {
							(C)} show $\eta(D_1^{(n,m)\\
						})$, $\hat{\lambda}(D_1^{(n,m)})$ and $\overline{\lambda}(D_1^{(n,m)})$ of the first filtration in Figure \ref{digraph_filtration:1} respectively. {(D)}, {(E)}, and {(F)} show their projections on the $mz$-plane.}
					\label{digraph_filtration_features:1}
				\end{figure}

				\begin{figure}[t!]
					\centering
					\begin{subfigure}{.32\textwidth}
						\centering
						\includegraphics[width=.99\linewidth]{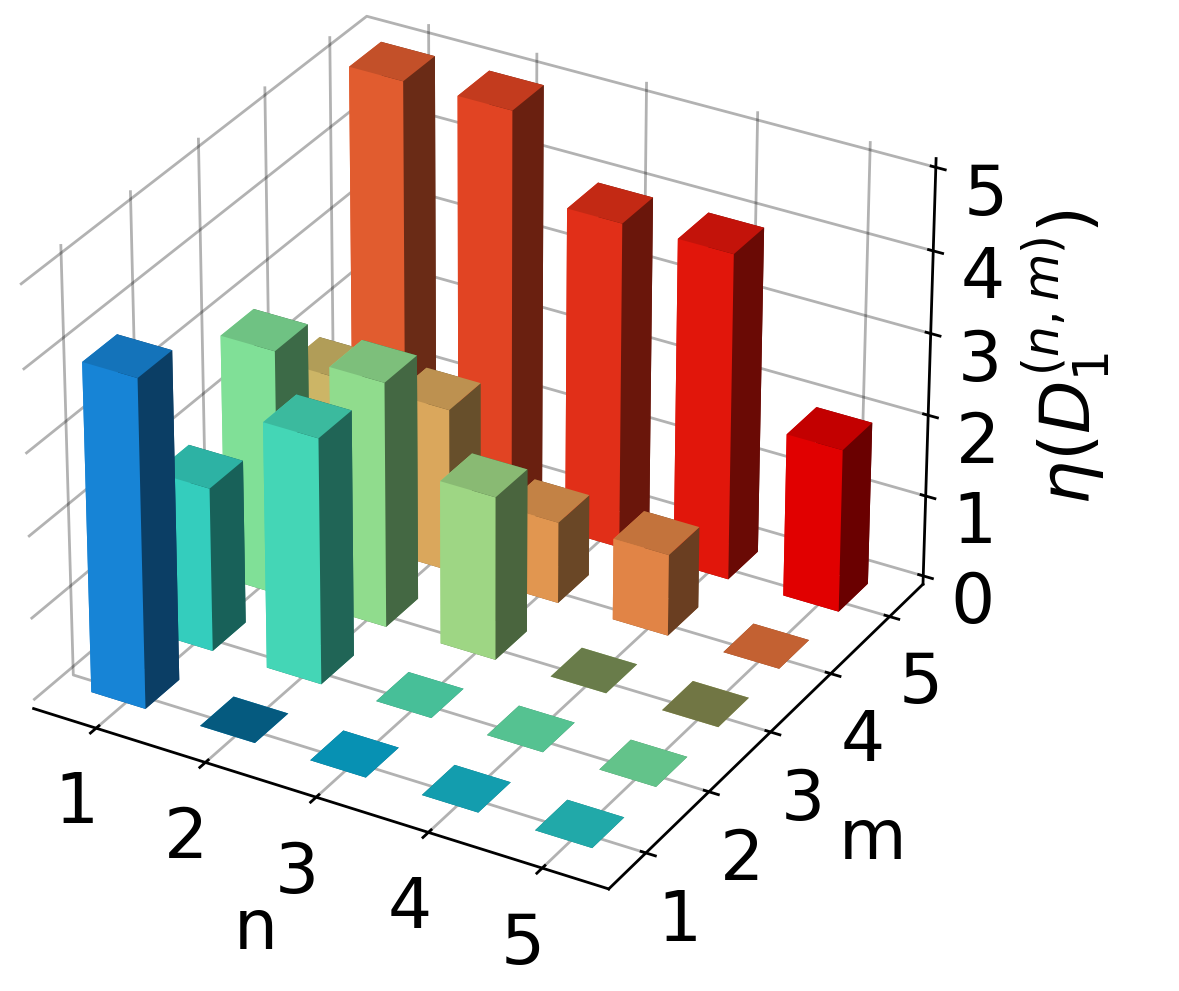}
						\caption{}
						\label{digraph_filtration_features:2a}
					\end{subfigure}%
					\hspace{0.1cm}
					\begin{subfigure}{.32\textwidth}
						\centering
						\includegraphics[width=.99\linewidth]{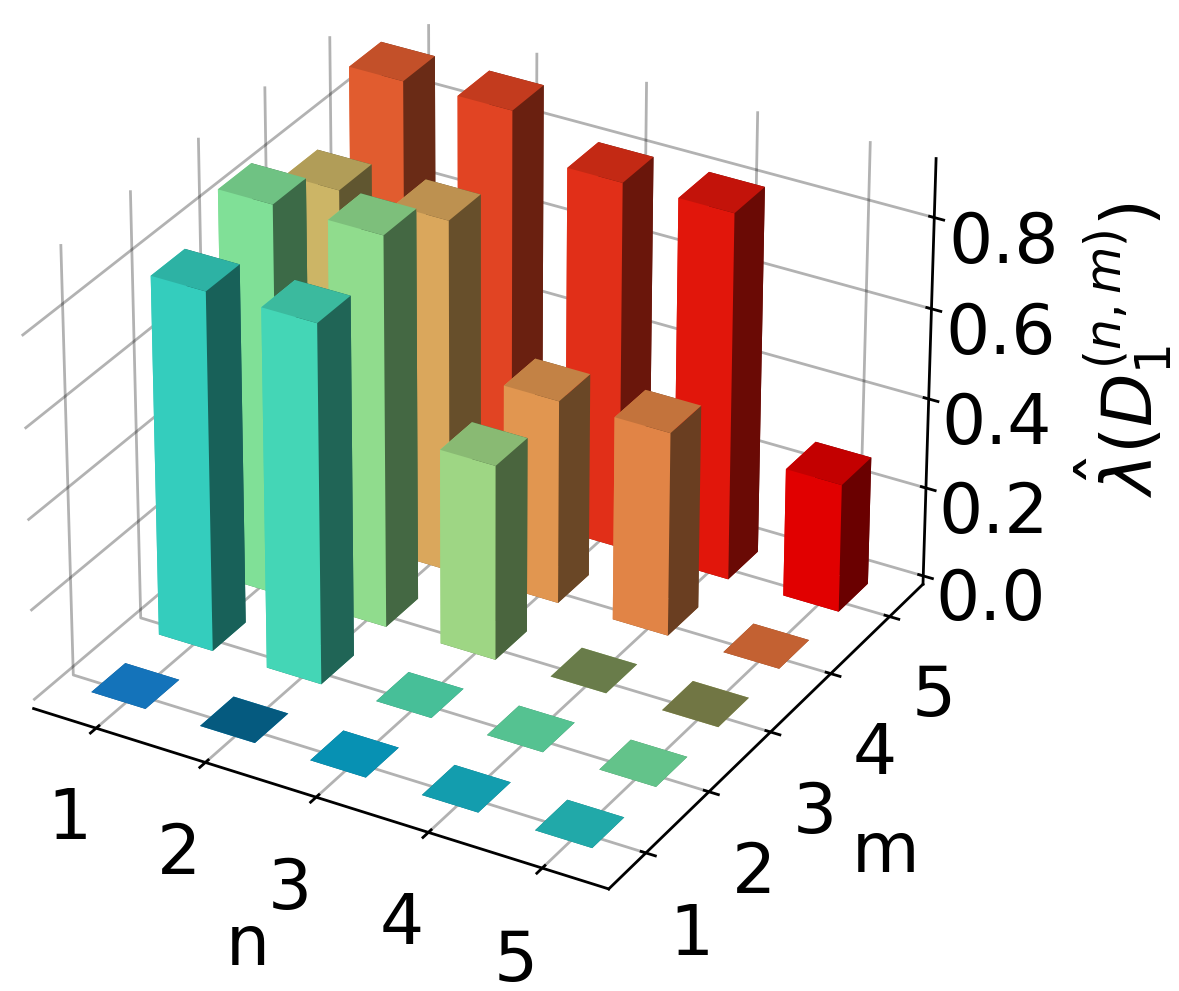}
						\caption{}
						\label{digraph_filtration_features:2b}
					\end{subfigure}%
					\hspace{0.1cm}
					\begin{subfigure}{.32\textwidth}
						\centering
						\includegraphics[width=.99\linewidth]{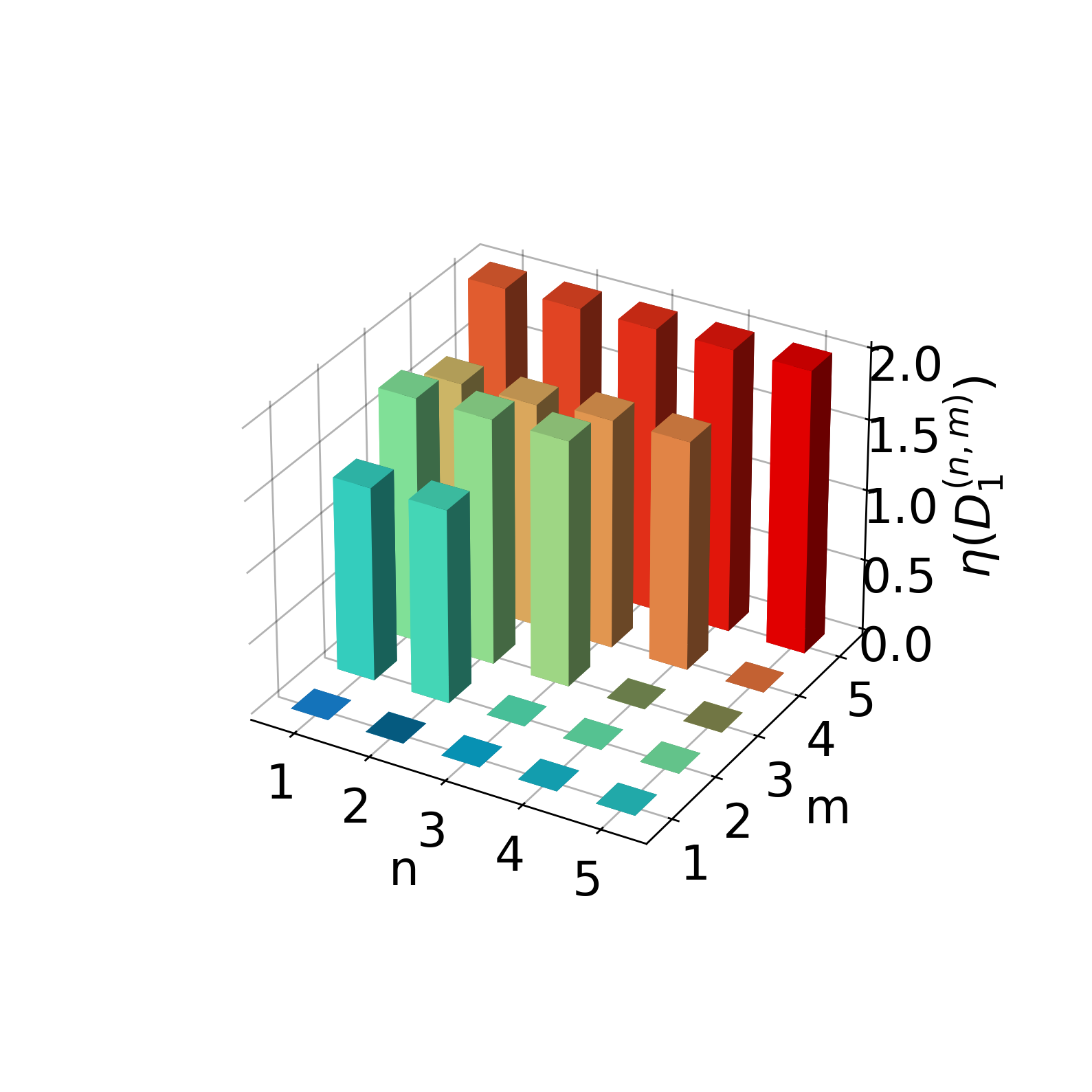}
						\caption{}
						\label{digraph_filtration_features:2c}
					\end{subfigure}
					
					\begin{subfigure}{.32\textwidth}
						\centering
						\includegraphics[width=.99\linewidth]{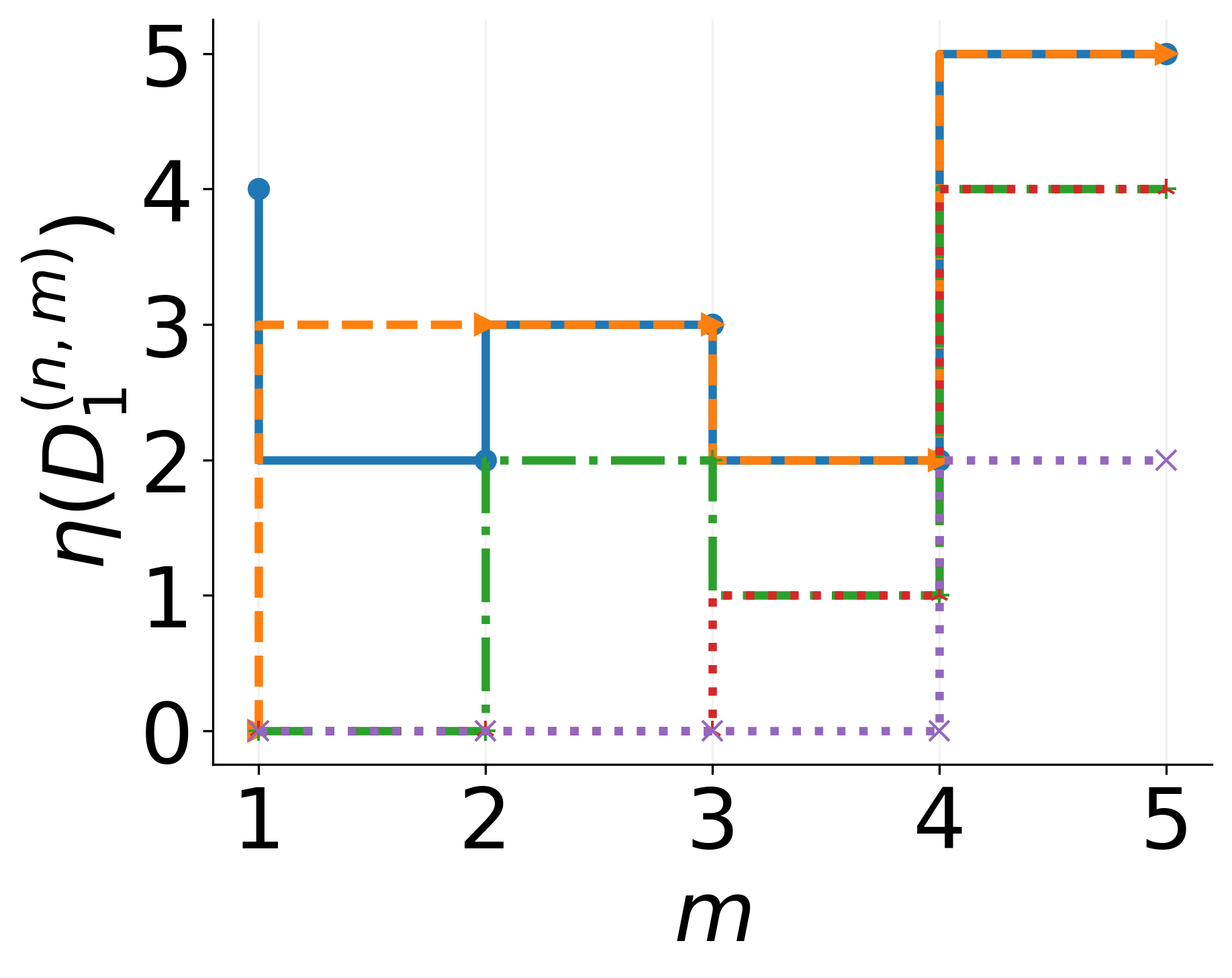}
						\caption{}
						\label{digraph_filtration_features:2d}
					\end{subfigure}%
					\hspace{0.1cm}
					\begin{subfigure}{.32\textwidth}
						\centering
						\includegraphics[width=.99\linewidth]{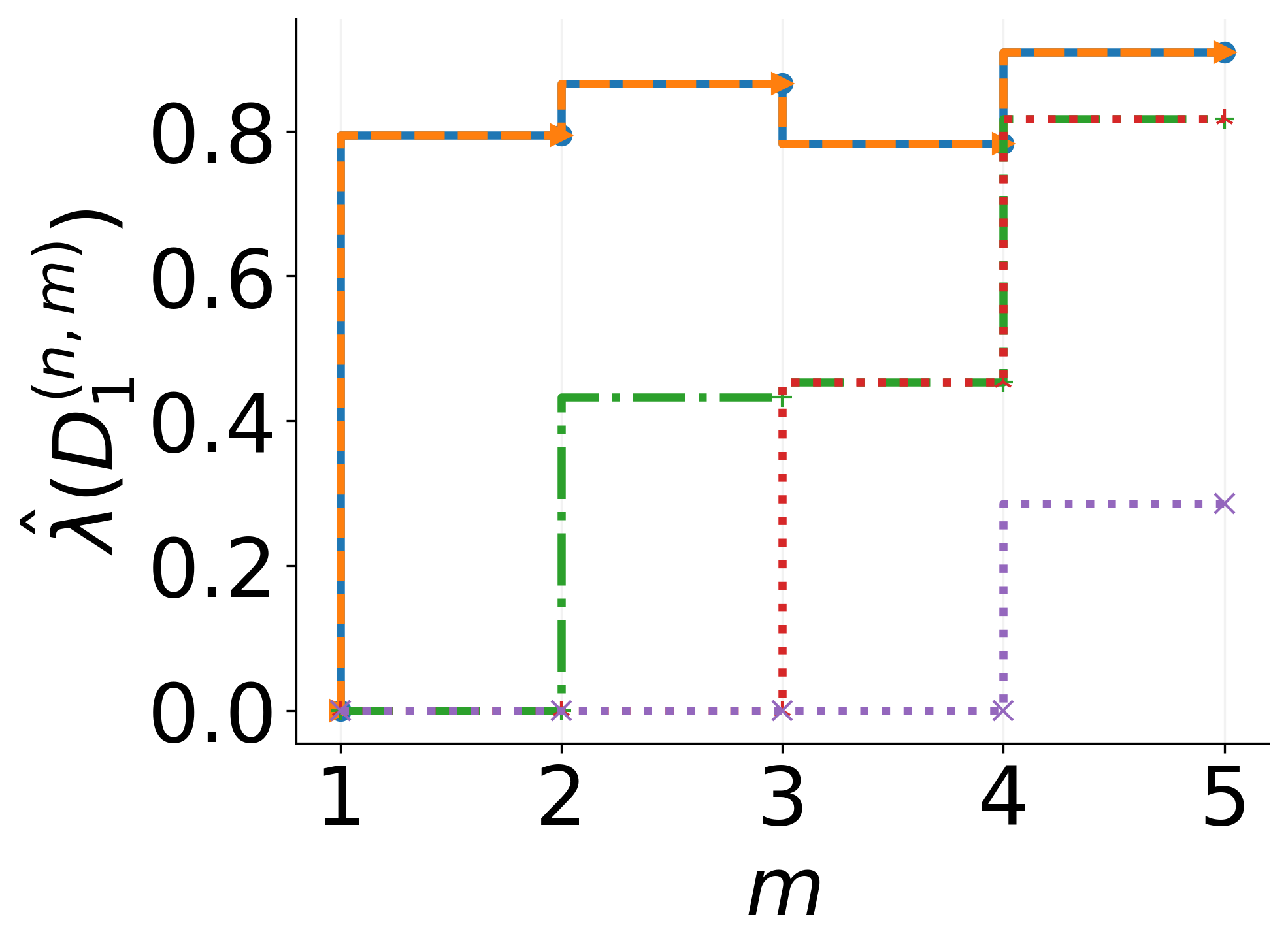}
						\caption{}
						\label{digraph_filtration_features:2e}
					\end{subfigure}%
					\hspace{0.1cm}
					\begin{subfigure}{.32\textwidth}
						\centering
						\includegraphics[width=.99\linewidth]{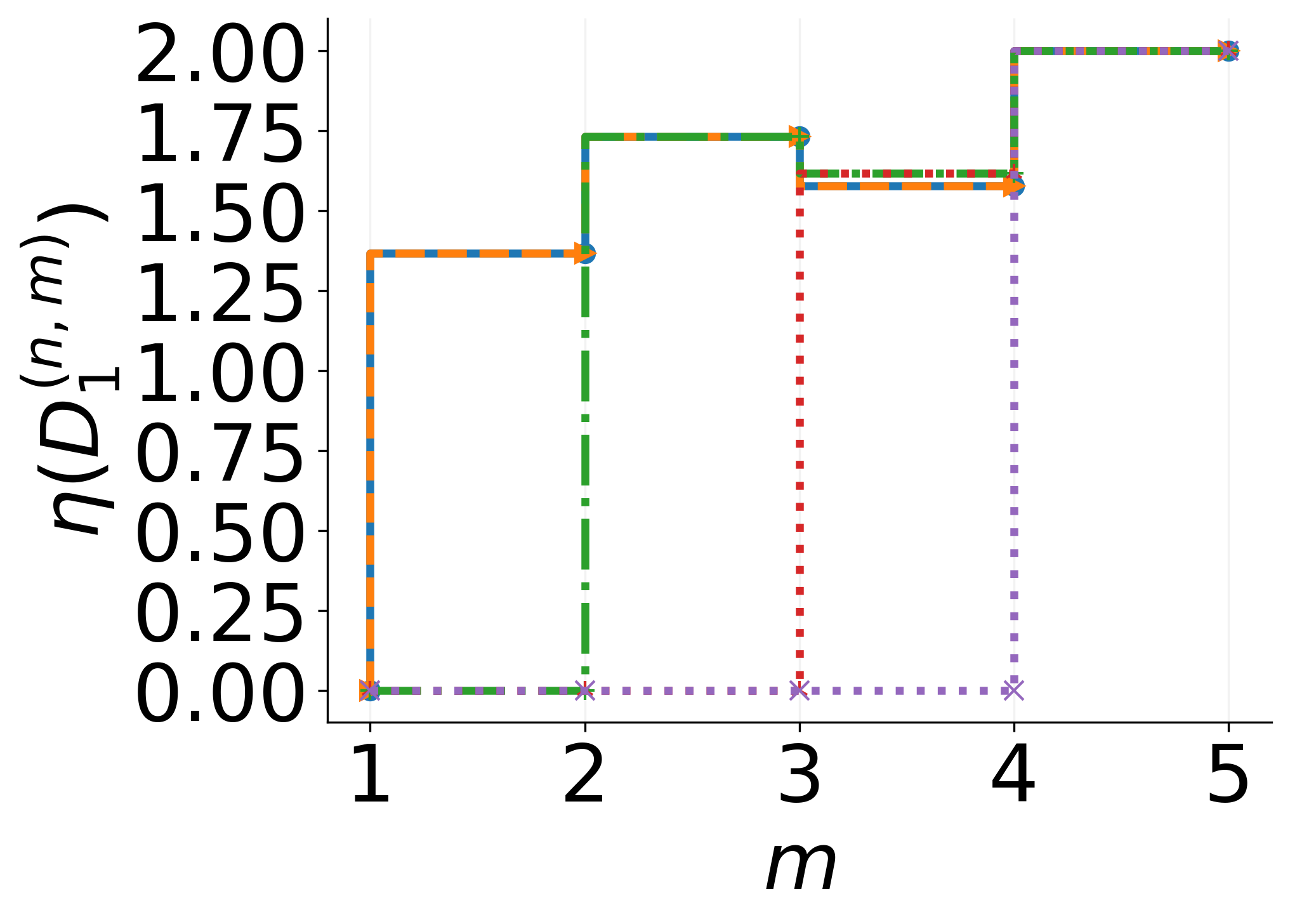}
						\caption{}
						\label{digraph_filtration_features:2f}
					\end{subfigure}
					
					\begin{subfigure}{.70\textwidth}
						\centering
						\includegraphics[width=.99\linewidth]{Legends}
					\end{subfigure}
					
					\caption{Captured features by persistent path Dirac operators induced by the filtrations in Figure \ref{digraph_filtration:1}. {(A)}, {(B)} and {
							(C)} show $\eta(D_1^{(n,m)\\
						})$, $\hat{\lambda}(D_1^{(n,m)})$ and $\overline{\lambda}(D_1^{(n,m)})$ of the second filtration in Figure \ref{digraph_filtration:1} respectively. {(D)}, {(E)}, and {(F)} show their projections on the $mz$-plane.}
					\label{digraph_filtration_features:2}
				\end{figure}
				

			\end{example}
			
			
			\subsection{Relationship of persistent path/hypergraph Dirac with persistent Laplacian}

			Several topological Laplacian operators have been constructed based on the type of objects and homologies they are used on. We list these objects, which consist of a family of discrete objects that satisfy certain properties and their corresponding topological Laplacians. Let $V$ be a nonempty finite ordered set, $P(V)$ the collection of all subsets of $V$ excluding the empty set, and $S(V)$ the set of all sequences in $V$. Recall that the usual graph homology of a graph $G=(V, E)$ with a vertex set $V$ is the homology of the chain complex $(\Omega_\bullet, \partial_\bullet)$ for which the vertices and edges generate its first two components (represented by ordered pairs $(v_0,v_1)$ based on $V$ ordering, e.g., $v_0<v_1$) respectively, and all higher degree components are set to be zero. For a hypergraph $H$, the usual hypergraph homology is just the embedded homology induced by the edges of $H$, represented by ordered tuples based on $V$ ordering, in the chain complex $(C_\bullet,\partial_\bullet)$ induced on $V$ as constructed before. However, the version we are considering is the one we have defined, which is induced by the anchor sequences in $H$. A simplicial complex $\mathcal{K}=(V, K)$ is a hypergraph with the property that if $e$ is an edge, and $\tau \subset e$, then $\tau$ is an edge in $K$. Its homology is the usual hypergraph homology. For a digraph $G$, its usual homology is the homology of the chain complex $(\Omega_\bullet, \partial_\bullet)$ for which its first two components are generated by the vertices and edges of $G$, and the higher components are set to be zero. However, in our case, we consider a natural extension of this complex and its homology by path complexes. A path complex $P$ is a collection of elements of $S(V)$ such that if $v=(v_0,v_1,\cdots,v_{n-1},v_n)\in P$, then $(v_1,\cdots,v_{n-1},v_n)$ and $(v_0,v_1,\cdots,v_{n-1})$ are in $P$. Its homology is the constructed one in the Path Dirac section.
			
			A tightly related topological Laplacian to persistent path Dirac is the persistent path Laplacian \cite{wang2022persistent}. Though both operate on digraphs as constructed in this article, they exhibit a significant distinction, as they encode information about distinct complexes and homologies. Precisely, the $n$-th persistent path Laplacian preserves details related to the $n$-th homology of the chain complex
			\begin{equation}
				\xymatrix@C=3em{
					\cdots \ar[r] & C_{n+1}	\ar[r]^{d^{B,A}_{n+1}} &A_{n}\ar[r]^{d^{A}_{n}}  & A_{n-1} \ar[r]^{d^{A}_{n-1}}& \cdots 
				}
			\end{equation}
			and is given by
			\begin{equation}
				\Delta_n^{B,A}=\Delta_n = (d^A_n)^*d^A_n+d^{B,A}_{n+1}(d^{B,A}_{n+1})^*.
			\end{equation}
			On the other hand, the n-th persistent path Dirac operator conveys information pertaining to the homologies of degrees up to and including n within the auxiliary chain complex $(C_\bullet, d_\bullet)$, as illustrated in the diagram

			\begin{equation}
				\xymatrix@C=3em{
					\cdots \ar[r] &A_{n}\ar[r]^{d^{A}_{n}} \ar[d] & A_{n-1} \ar[r]^{d^{A}_{n-1}}\ar[d]& \cdots \ar[r]^{d^{A}_{2}}\ar[d]& A_{1} \ar[r]^{d^{A}_{1}}\ar[d]& A_0 \ar[r]\ar[d]& 0\\
					\cdots \ar[r]&  C_{n} \ar[r]^{d^{C}_{n}}\ar[ur]^{d^{B,A}_{n}}\ar@{_{(}.>}[d] & C_{n-1}\ar[r]^{d^{C}_{n-1}}\ar[ur]^{d^{B,A}_{n-1}}\ar@{_{(}.>}[d] & \cdots \ar[r]^{d^{C}_{2}} \ar[ur]^{d^{B,A}_{2}}\ar@{_{(}.>}[d] & C_1\ar[r]^{d^{C}_{1}} \ar[ur]^{d^{B,A}_{1}}\ar@{_{(}.>}[d] & C_0 \ar@{_{(}.>}[d] \ar[r] & 0 \\
					\cdots \ar[r] & B_{n}\ar[r]^{d^{B}_{n}} & B_{n-1} \ar[r]^{d^{B}_{n-1}}& \cdots \ar[r]^{d^{B}_{2}}& B_{1} \ar[r]^{d^{B}_{1}}& B_0 \ar[r]& 0 	
				}
			\end{equation}
			
			However, when properly managed, both sequences of persistent path Laplacians $$(\Delta_0^{B,A},\ \Delta_1^{B,A},\ \cdots,\ \Delta_n^{B,A})$$ and persistent path Dirac operators $$(D_0^{B,\ A},\ D_1^{B,\ A},\ \cdots,\ D_n^{B,\ A})$$ have the same computational complexities. In general, the behavior of these two types of operators differs significantly, making it unreasonable to compare them against each other. One may be more suitable for analyzing a particular situation than the other. Furthermore, non-persistent Dirac operators encompass information from non-persistent Laplacians and more, rendering them legitimately comparable. Consequently, both types of persistent operators are consequential, and they are better employed in conjunction to capture the complete persistence information. Moreover, possible Dirac operators can be constructed for graph, simplicial complex and hyperdigraph homologies as discussed in the Concluding Remarks.

			
			\begin{example}
				According to the comments after Proposition \ref{prop:2}, we can infer the eigenvalues of a Dirac operator from its corresponding Laplacian operator. However, this is not generally true for persistent path Dirac and Laplacian operators. In this example, we show that persistent path Dirac gives rise to eigenvalues that cannot be inferred by the corresponding persistent path Laplacians. Consider the digraph filtration depicted in Figure \ref{digraph_filtration_features:4}. Table \ref{table:6} shows the calculated eigenvalues of the first-degree persistent path Dirac operators and their corresponding first-degree persistent path Laplacian operators. We notice that $L_1^{(0,1)}$ has an eigenvalue 2, while as $D_1^{(0,1)}$ does not have $\sqrt{2}$. Similarly, $L_1^{(0,2)}$ has an eigenvalue 2, and $D_1^{(0,2)}$ does not have $\sqrt{2}$.
				

				\begin{figure}[t!]
					\centering
					\begin{subfigure}{.20\textwidth}
						\centering
						\includegraphics[width=.99\linewidth]{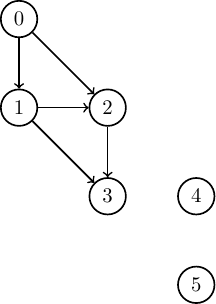}
						\caption{}
					\end{subfigure}%
					\hspace{0.2cm}
					\begin{subfigure}{.20\textwidth}
						\centering
						\includegraphics[width=.99\linewidth]{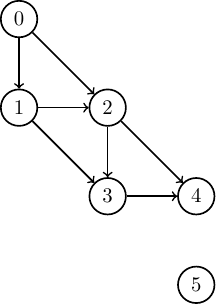}
						\caption{} 
					\end{subfigure}%
					\hspace{0.2cm}
					\begin{subfigure}{.20\textwidth}
						\centering
						\includegraphics[width=.99\linewidth]{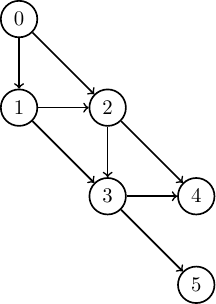}
						\caption{} 
					\end{subfigure}%
					\hspace{0.2cm}
					\begin{subfigure}{.20\textwidth}
						\centering
						\includegraphics[width=.99\linewidth]{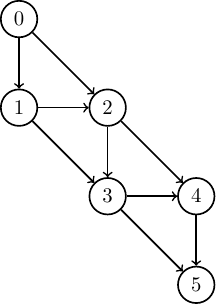}
						\caption{} 
					\end{subfigure}%
					
					\caption{A filtration of a digraph to compare the eigenvalues of persistent path Laplacian and persistent path Dirac operators.}
					\label{digraph_filtration_features:4}
					
				\end{figure}
				
				\begin{table}[t!] 
					\centering
					\begin{tabular}{@{}llll}
						\hline
						$\text{Spec}(L^{(0,1)}_1)=$ & $\{2,4,4,4,5\}$  &  $\text{Spec}(D^{(0,1)}_1)=$ & $\{0 (\times 5), \pm\sqrt{1.59},\pm\sqrt{3},\pm 2,$  \\ [0.5ex] 
						& & & $\pm\sqrt{4.41},\pm\sqrt{5},\pm\sqrt{5}\}$ \\
						$\text{Spec}(L^{(0,2)}_1)=$ & $\{2,4,4,4,5\}$  &  $\text{Spec}(D^{(0,2)}_1)=$ & $\{0 (\times 4), \pm\sqrt{0.89},\pm\sqrt{1.7},\pm 2,$  \\ [0.5ex] 
						& & & $\pm\sqrt{3.25},\pm\sqrt{4.86},\pm\sqrt{5},\pm\sqrt{5.3}\}$ \\
						$\text{Spec}(L^{(0,3)}_1)=$ & $\{2,4,4,4,5.09\}$  &  $\text{Spec}(D^{(0,3)}_1)=$ & $\{0 (\times 6), \pm\sqrt{1.19},\pm\sqrt{3},\pm 2,$  \\ [0.5ex] 
						& & & $\pm\sqrt{3.47},\pm\sqrt{5},\pm\sqrt{5.09},\pm\sqrt{5.34}\}$ \\
						\hline
					\end{tabular}
					\caption{Eigenvalues of persistent path Laplacian and persistent path Dirac operators induced by the filtration in Figure \ref{digraph_filtration_features:4}.} 
					\label{table:6}
				\end{table}
				

			\end{example}
			
			
			\section{Applications}
			
			In this section, we explore the applications of path and hypergraph complexes and persistent path Dirac and persistent hypergraph Dirac operators in the field of biology, specifically focusing on their utilization with molecules. The approach we use \cite{wang2022persistent, chen2023persistent} involves enhancing molecules with path and hypergraph complexes derived from the constituent atoms' inherent characteristics. We then apply a filtration process, which can be based on distance or other pertinent features of interest. Atoms possess various distinguishing attributes, including atomic number, atomic mass, and covalent radius. In this section, our primary focus is examining the impact of the Pauling Scale of electronegativity on molecules.
			
			A preorder on a nonempty set $P$ is a relation that satisfies the following:
			\begin{enumerate}
				\item Reflexivity: $a\leq a$ for all $a\in P$.
				\item Transitivity: For all $a,b,c\in P$, if $a\leq b$, $b\leq c$, then $a\leq c$.
			\end{enumerate} 
			
			Preorders naturally give rise to digraphs by their inherent relations. These digraphs tend to induce rich path and hypergraph complexes, as they often encompass the essential subgraphs needed to induce higher-degree path and hypergraph complex components. To avoid including loops, we specifically consider a strict preorder, which means we do not permit reflexive relations.
			
			Assigning real values to atoms produces a natural preorder on atoms, where for any two atoms A and B, we say A is less than or equal to B (A $\leq$ B) if and only if the assigned value to A is less than or equal to the value assigned to B. The periodic table then can be visualized as stacks of atoms on a line, and atoms of a molecule can be visualized the same. Our proposed application involves assigning atoms electronegativity values based on the Pauling Scale. This assignment establishes a natural preorder for both individual atoms and molecules. It results in induced digraphs that can capture certain aspects of molecular nature and geometric behavior. We can then apply filtration techniques and compute the resulting path or hypergraph complexes and their corresponding Dirac operators. These calculations yield numerical features that can be leveraged in various learning processes and analyses.
			
			We illustrate our application through two specific examples. First, we equip a Glycogen molecule with a directed graph reflecting the bonds between its constituent atoms only. We will consider the subdirected graph induced by the bonds between the atoms instead of the whole directed graph induced by the preorder. We determine the direction of these bonds based on Pauling electronegativity values as described in \cite{pauling1932nature}. The Glycogen molecule is comprised of oxygen (O), hydrogen (H), and carbon (C) atoms, which possess electronegativity values of 3.44, 2.2, and 2.55, respectively. As a result, the assignment of directions is detailed in Figure \ref{application:1:digraph}.
			\begin{figure}[t!]
				\centering
				\begin{subfigure}{.25\textwidth}
					\centering
					\includegraphics[width=.99\linewidth]{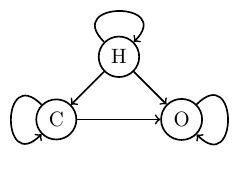}
				\end{subfigure}
				\caption{A figure that demonstrates the allowed digraph edges equipped on the Glycogen molecule, where the loops say two atoms of the same type are connected, and does not mean a directed edge from an atom to itself is allowed for no loops are allowed.} 
				\label{application:1:digraph}
			\end{figure}
			Note that we do not permit loops in our construction. The loops depicted in Figure \ref{application:1:digraph} indicate that we allow arrows between atoms of the same type but do not allow arrows from an atom to itself. To determine the distances between atoms in Glycogen, we have computed them using the MolView website \cite{MolView}, and the results are presented in Table \ref{table:5}.
			\begin{table}[t!] 
				\centering
				\begin{tabular}{@{}lllll}
						\hline
						& H$\leftrightarrow$ O  &  C$\leftrightarrow$ O& C$\leftrightarrow$ C & C$\leftrightarrow$ H \\ [0.5ex] 
						\hline
						Distance & 0.97 {\AA}& 1.43 {\AA}   & 1.53 {\AA} & 1.1 {\AA} \\ 
						
						\hline
					\end{tabular}
					\caption{The distances between every two atoms in Glycogen molecule that share a bond as calculated by MolView.}
					\label{table:5}
				\end{table}
				\begin{figure}[t!]
					\centering
					\begin{subfigure}{.4\textwidth}
						\includegraphics[width=.99\linewidth]{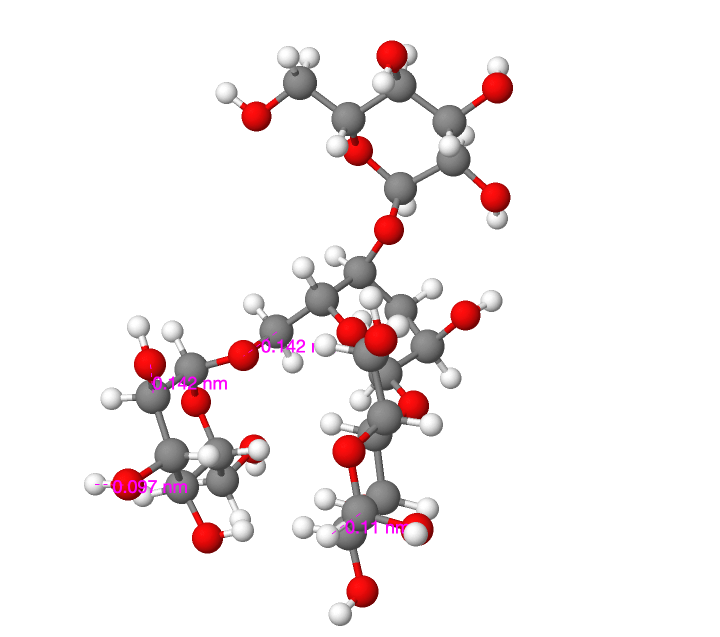}
						\caption{}
					\end{subfigure}%
					\begin{subfigure}{.35\textwidth}
						\includegraphics[width=.99\linewidth]{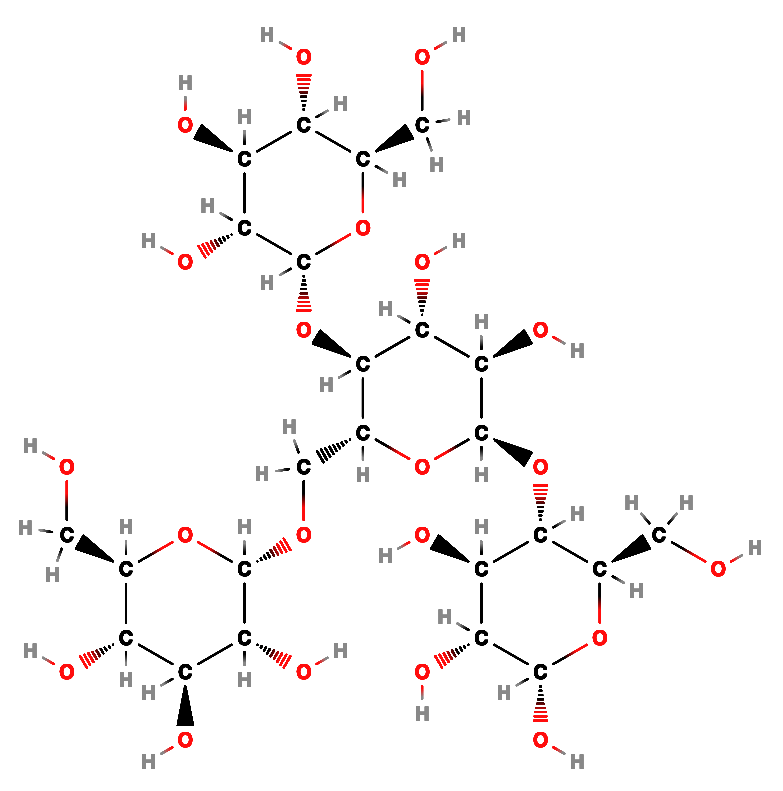}
						\caption{}
					\end{subfigure}
					\caption{Glycogen molecule. {(A)} Glycogen structure. {(B)} Molecular formula.}
					\label{Glycogen}
				\end{figure}
				\begin{figure}[t!]
					\centering
					\begin{subfigure}{.32\textwidth}
						\centering
						\includegraphics[width=.99\linewidth]{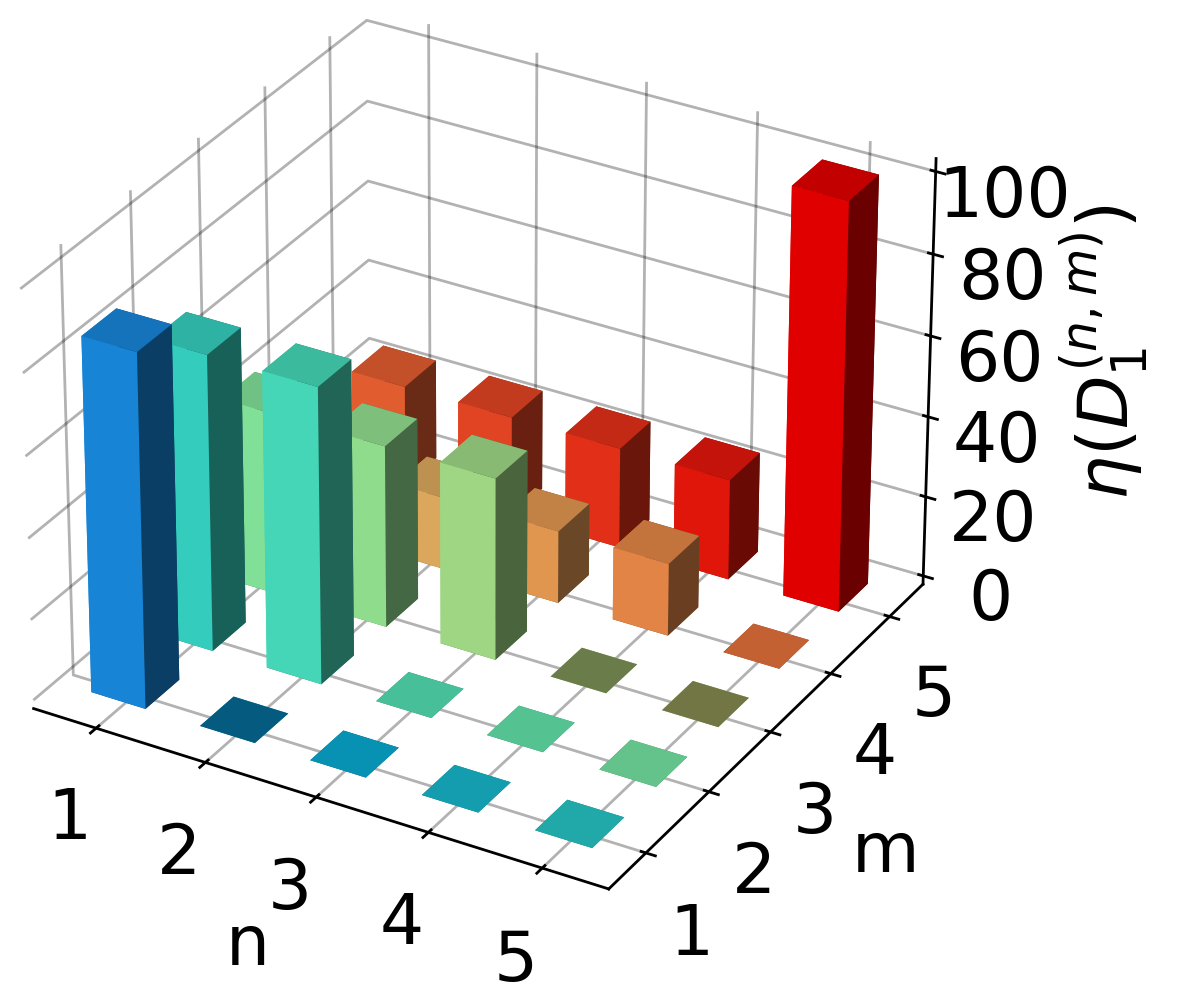}
						\caption{}
						\label{application:1:Features:a}
					\end{subfigure}%
					\hspace{0.1cm}
					\begin{subfigure}{.32\textwidth}
						\centering
						\includegraphics[width=.99\linewidth]{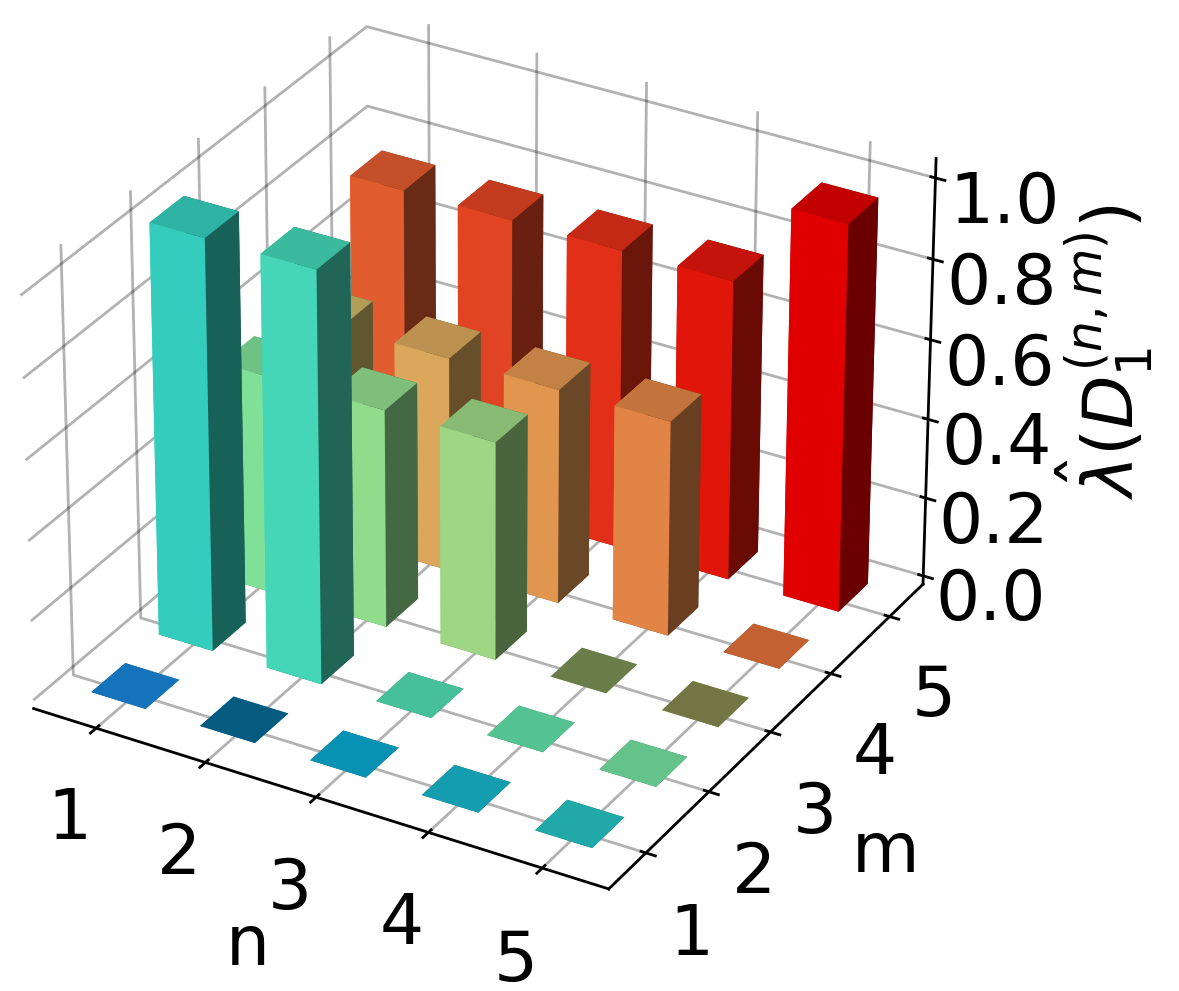}
						\caption{}
						\label{application:1:Features:b}
					\end{subfigure}%
					\hspace{0.1cm}
					\begin{subfigure}{.32\textwidth}
						\centering
						\includegraphics[width=.99\linewidth]{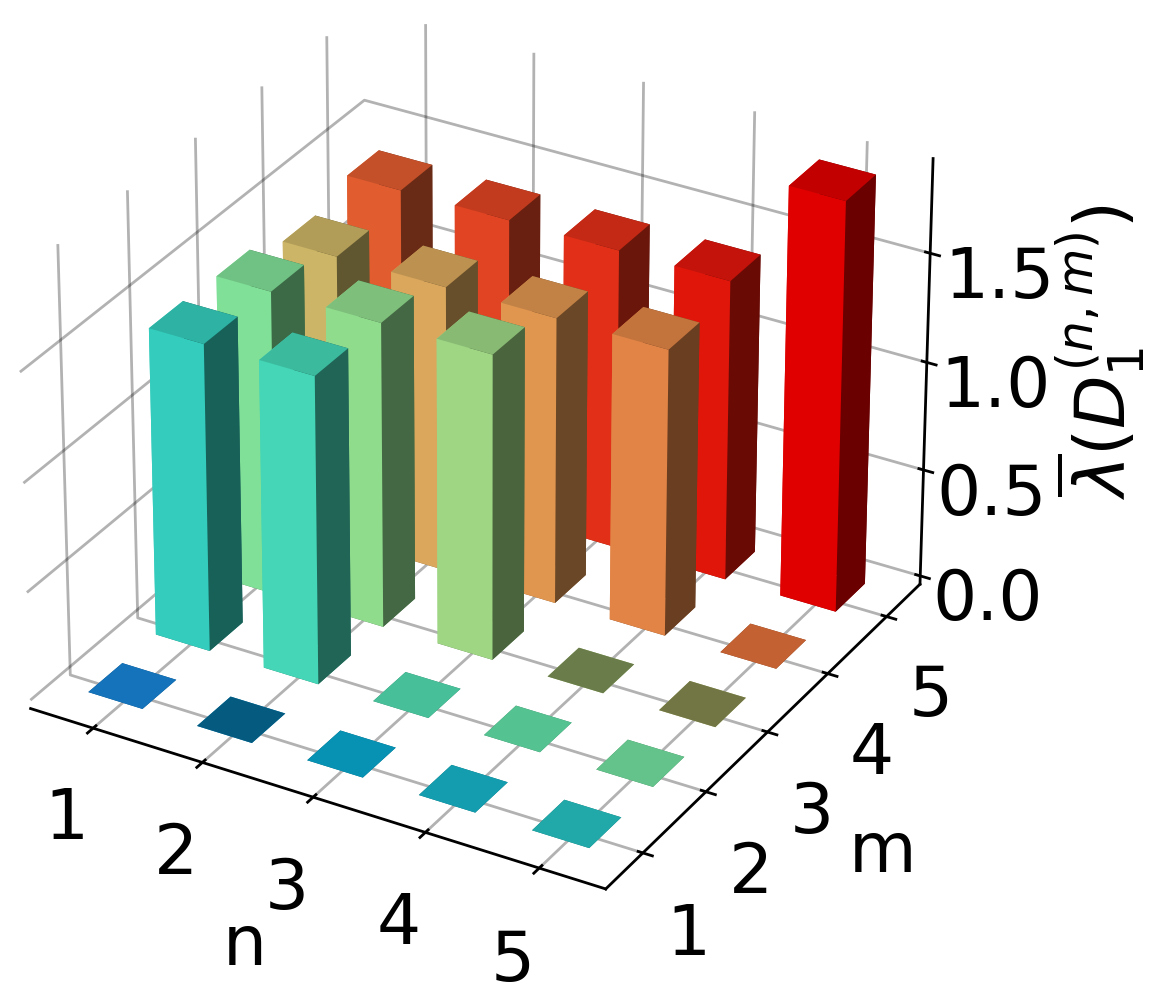}
						\caption{}
						\label{application:1:Features:c}
					\end{subfigure}
					
					\begin{subfigure}{.32\textwidth}
						\centering
						\includegraphics[width=.99\linewidth]{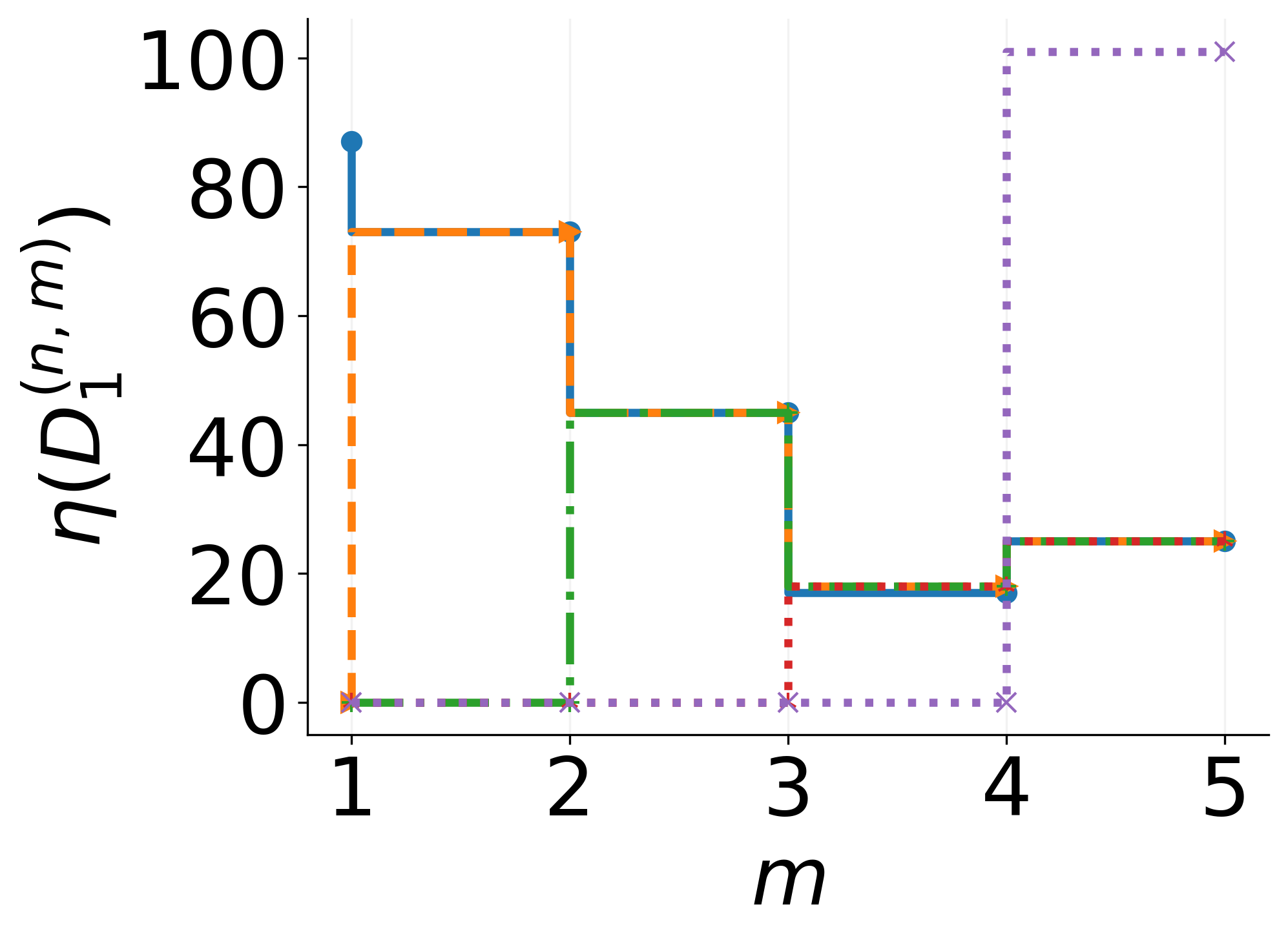}
						\caption{}
						\label{application:1:Features:d}
					\end{subfigure}%
					\hspace{0.1cm}
					\begin{subfigure}{.32\textwidth}
						\centering
						\includegraphics[width=.99\linewidth]{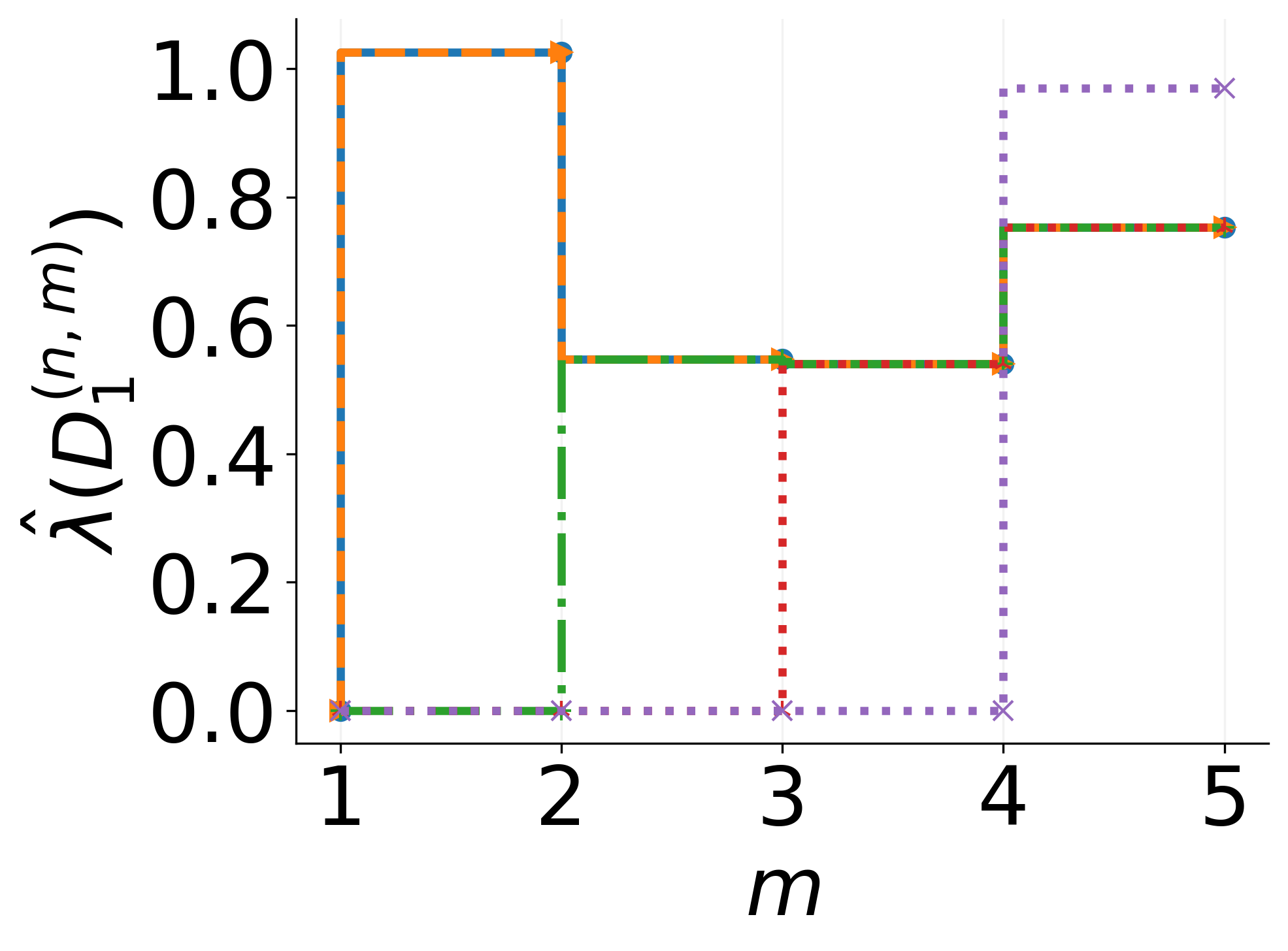}
						\caption{}
						\label{application:1:Features:e}
					\end{subfigure}%
					\hspace{0.1cm}
					\begin{subfigure}{.32\textwidth}
						\centering
						\includegraphics[width=.99\linewidth]{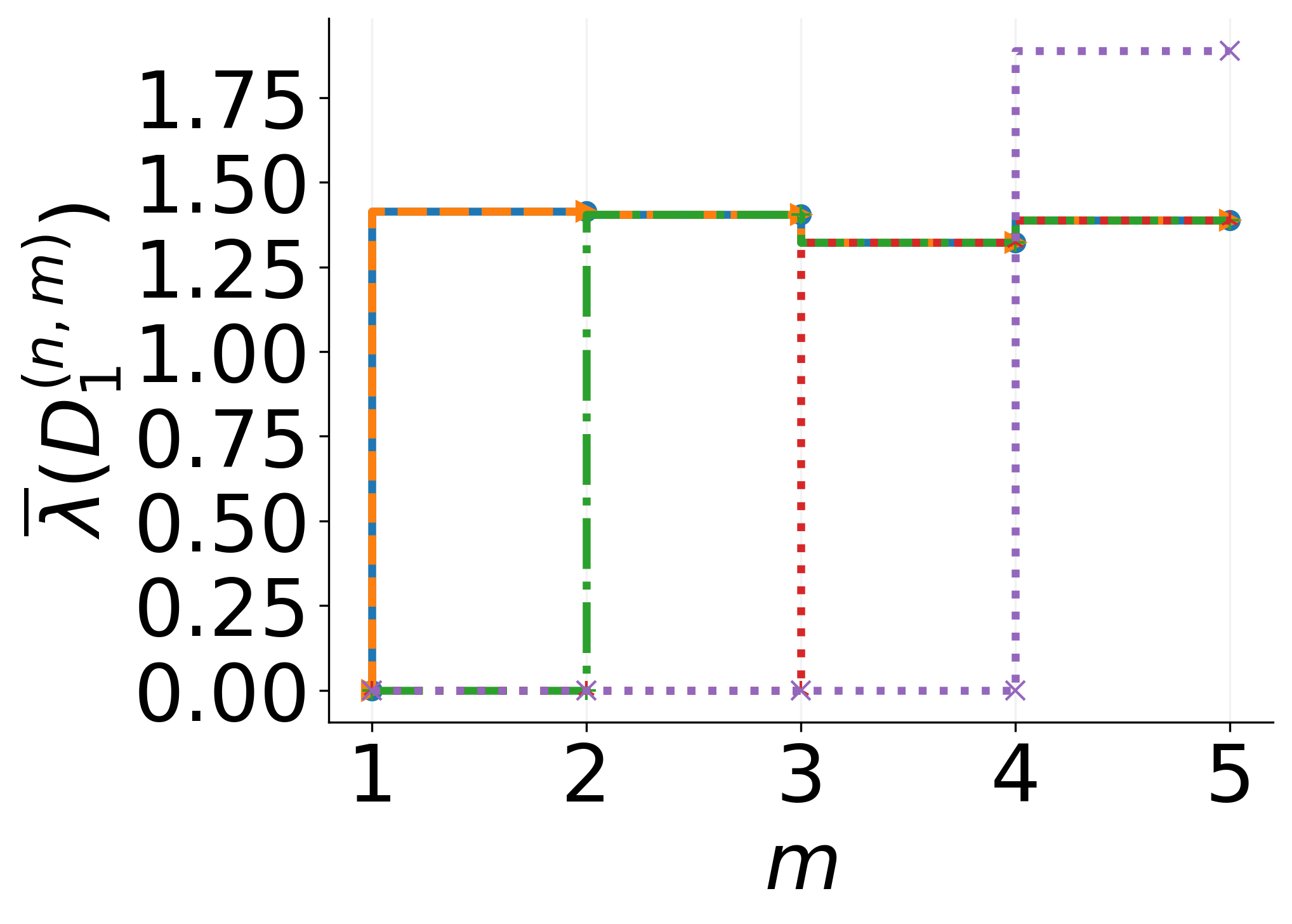}
						\caption{}
						\label{application:1:Features:f}
					\end{subfigure}
					
					\begin{subfigure}{.70\textwidth}
						\centering
						\includegraphics[width=.99\linewidth]{Legends}
					\end{subfigure}
					
					\caption{Captured features by persistent path Dirac operators of the filtration on Glycogen molecule in Figure \ref{Glycogen}. {(A)}, {(B)} and {
							(C)} show $\eta(D_1^{(n,m)\\
						})$, $\hat{\lambda}(D_1^{(n,m)})$ and $\overline{\lambda}(D_1^{(n,m)})$ of the operator respectively. {(D)}, {(E)}, and {(F)} show their projections on the $mz$-plane.}
					\label{application:1:Features}
				\end{figure}
				
				\begin{figure}[t!]
					\centering
					\begin{subfigure}{.32\textwidth}
						\centering
						\includegraphics[width=.99\linewidth]{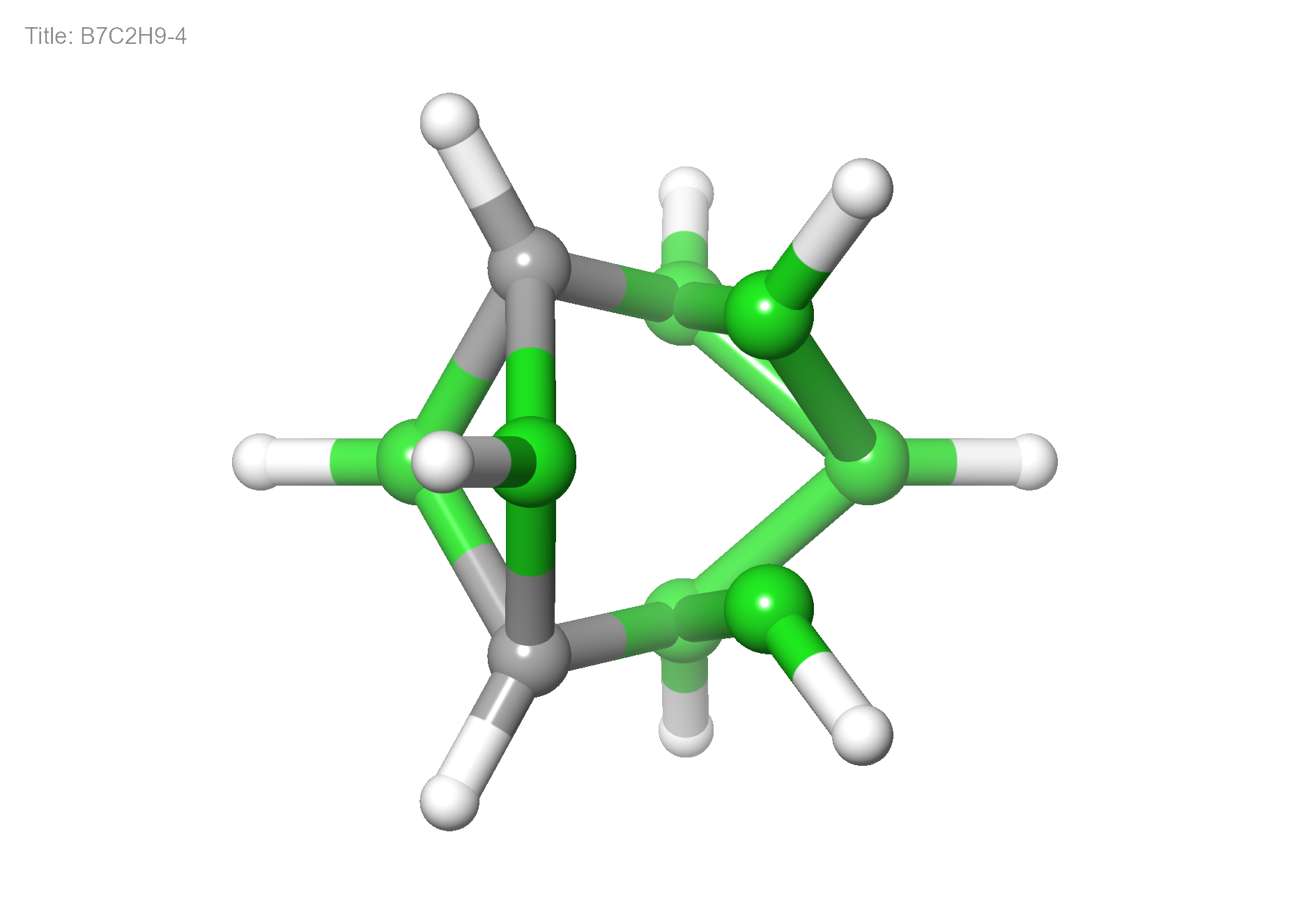}
						\caption{}
						\label{application:2:Features:a}
					\end{subfigure}%
					\hspace{0.1cm}
					\begin{subfigure}{.32\textwidth}
						\centering
						\includegraphics[width=.99\linewidth]{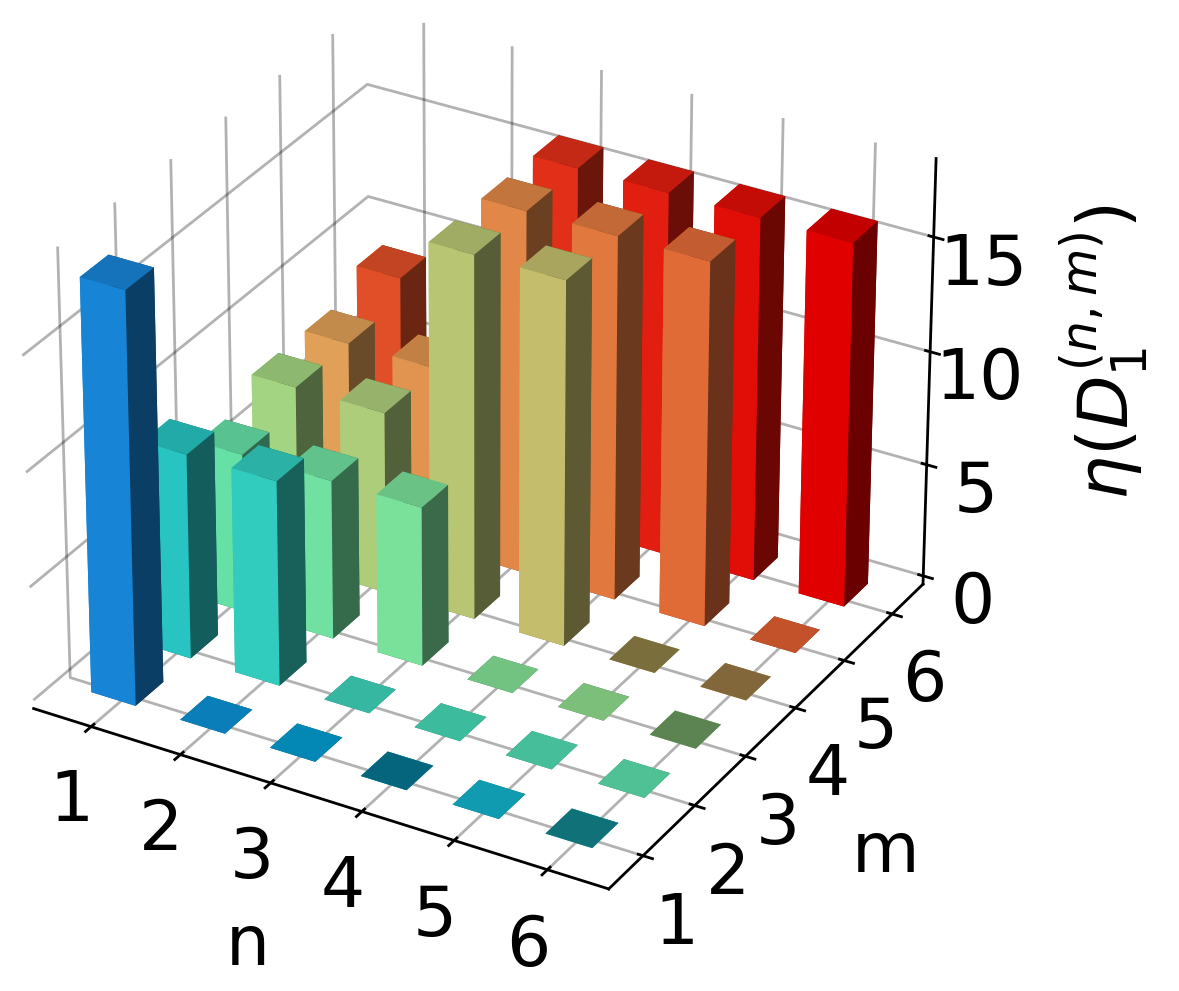}
						\caption{}
						\label{application:2:Features:b}
					\end{subfigure}%
					\hspace{0.1cm}
					\begin{subfigure}{.32\textwidth}
						\centering
						\includegraphics[width=.99\linewidth]{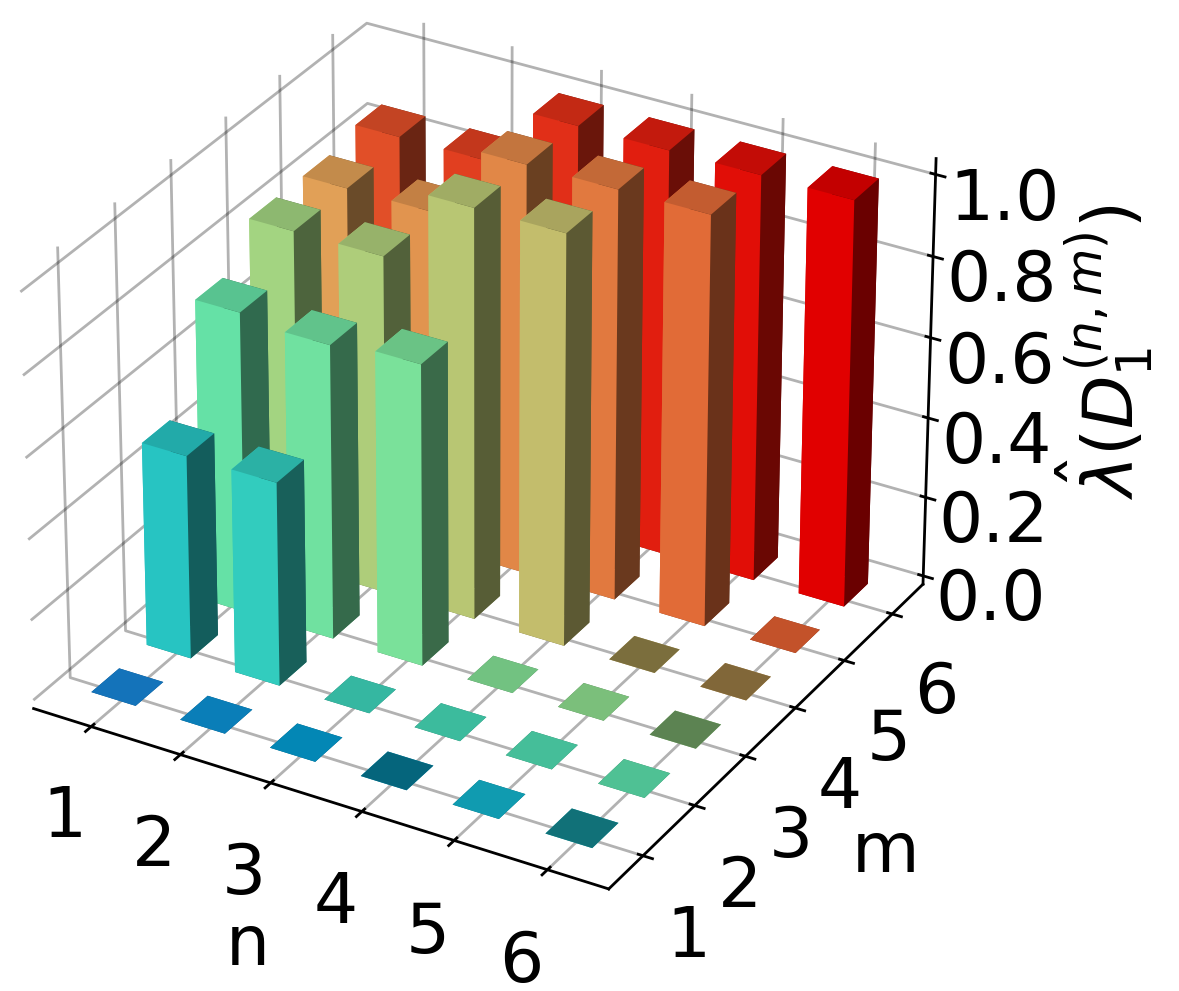}
						\caption{}
						\label{application:2:Features:c}
					\end{subfigure}

					\begin{subfigure}{.32\textwidth}
						\centering
						\includegraphics[width=.99\linewidth]{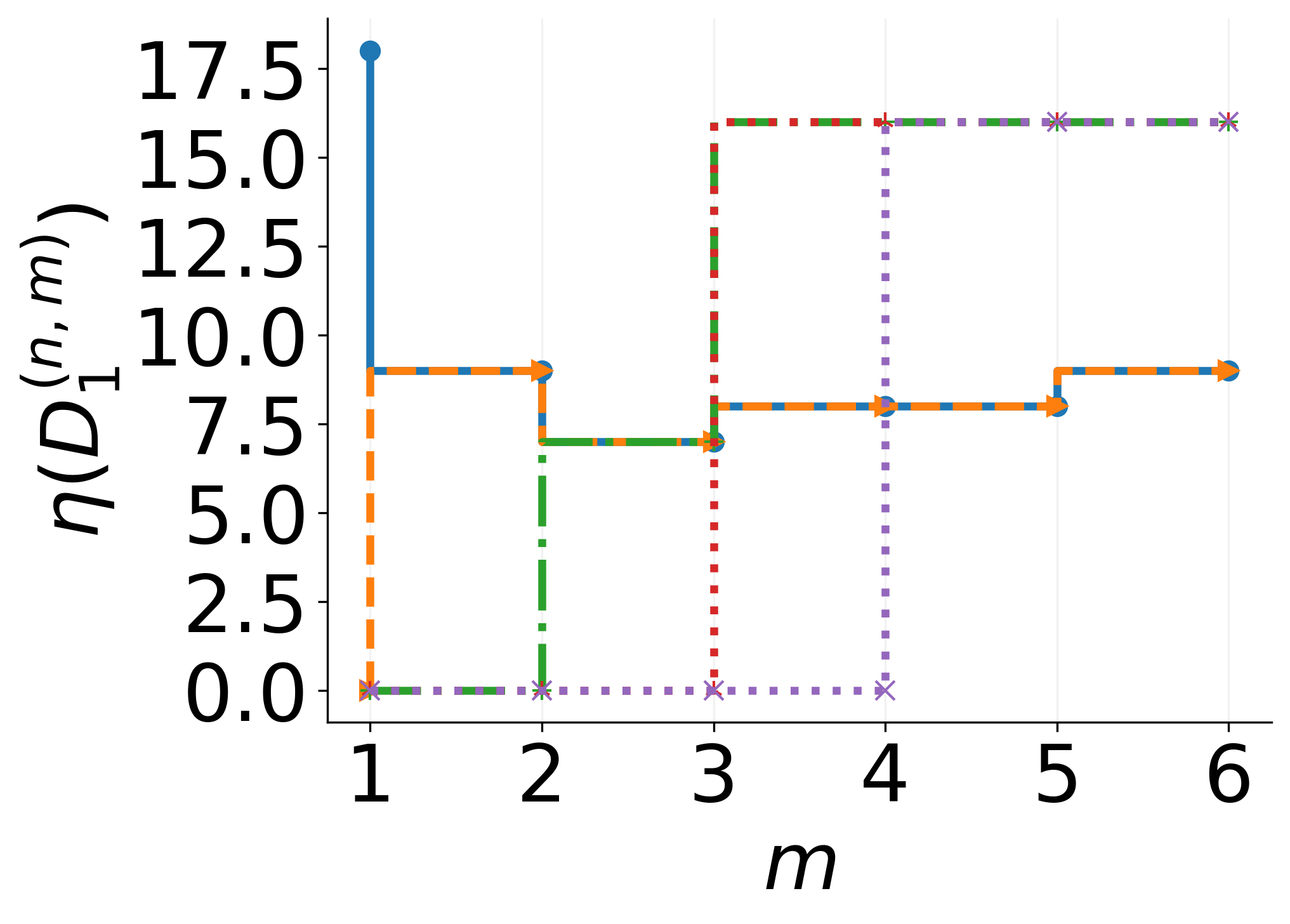}
						\caption{}
						\label{application:2:Features:e}
					\end{subfigure}%
					\hspace{0.1cm}
					\begin{subfigure}{.32\textwidth}
						\centering
						\includegraphics[width=.99\linewidth]{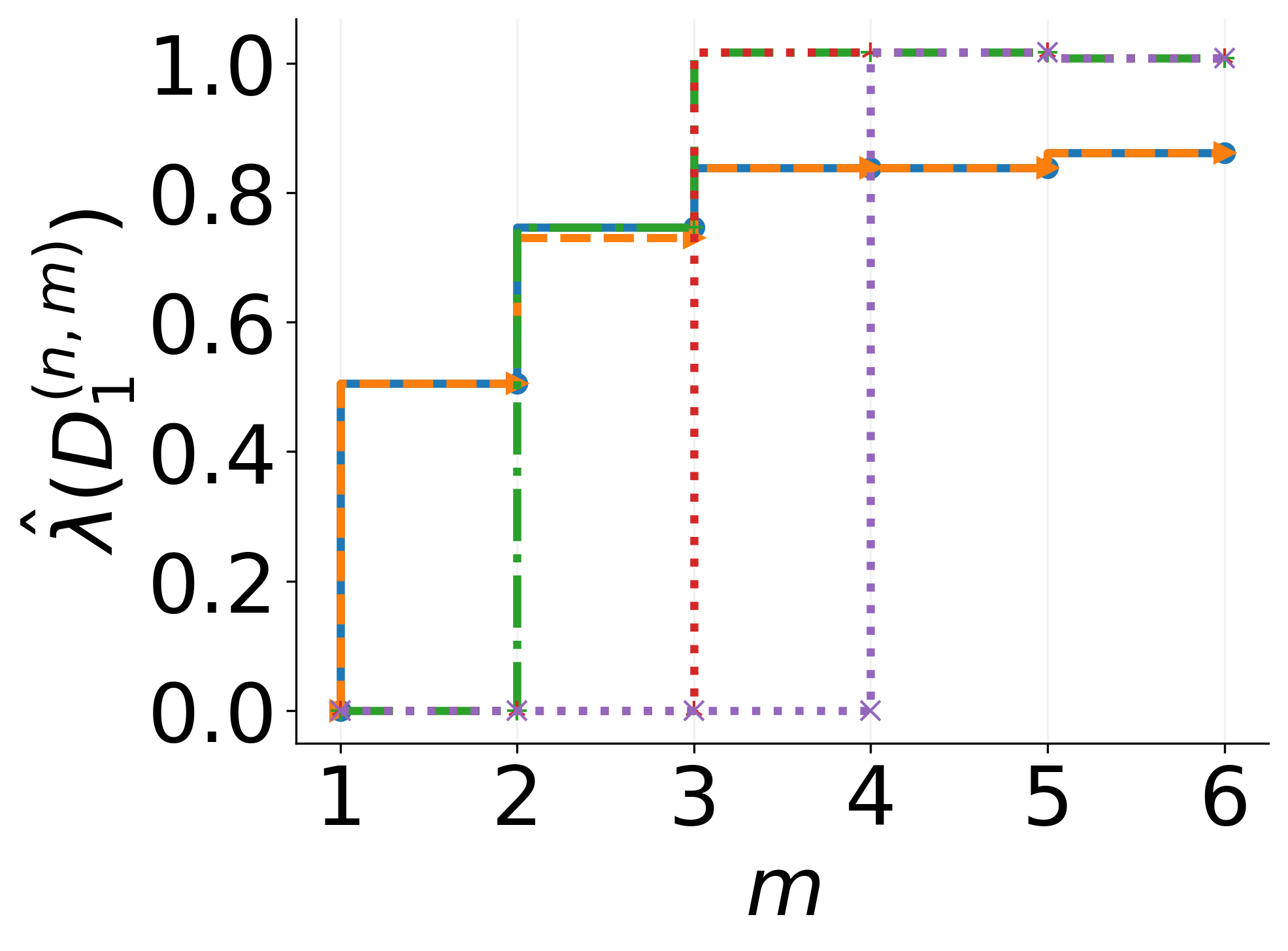}
						\caption{}
						\label{application:2:Features:f}
					\end{subfigure}
					
					\begin{subfigure}{.70\textwidth}
						\centering
						\includegraphics[width=.99\linewidth]{Legends}
					\end{subfigure}

					\caption{$\text{B}_7\text{C}_2\text{H}_9$ first isomer and its captured features. {(A)} shows the obtained protein-ligand structure of the first isomer. {(B)} and {(C)} show $\eta(D_1^{(n,m)})$ and $\hat{\lambda}(D_1^{(n,m)})$ obtained from the first isomer. {(D)} and {(E)} show their projections on the $mz$-plane.}
					\label{application:2:Features}
				\end{figure}

				\begin{figure}[t!]
					\centering
					\begin{subfigure}{.32\textwidth}
						\centering
						\includegraphics[width=.99\linewidth]{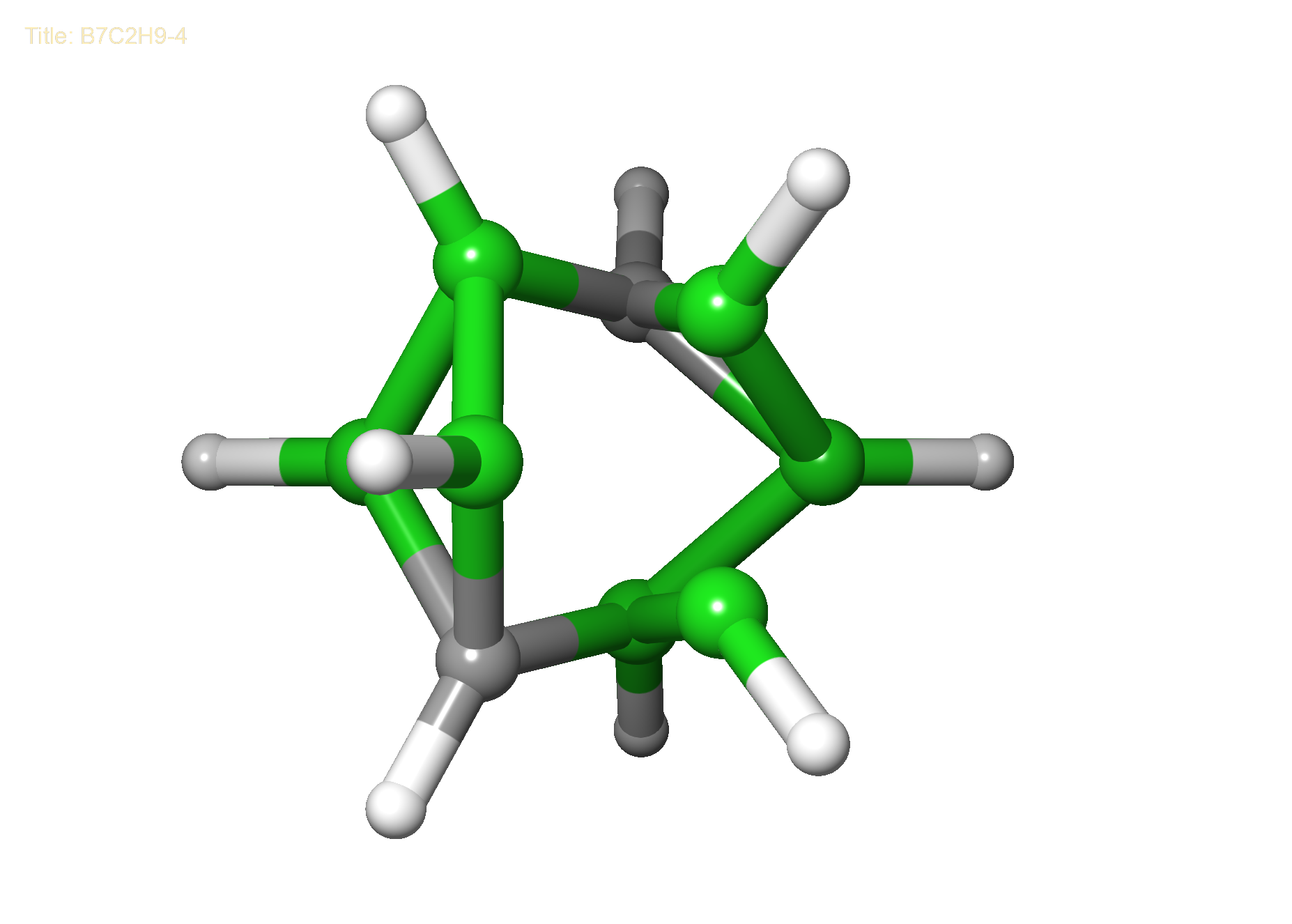}
						\caption{}
						\label{application:3:Features:a}
					\end{subfigure}%
					\hspace{0.1cm}
					\begin{subfigure}{.32\textwidth}
						\centering
						\includegraphics[width=.99\linewidth]{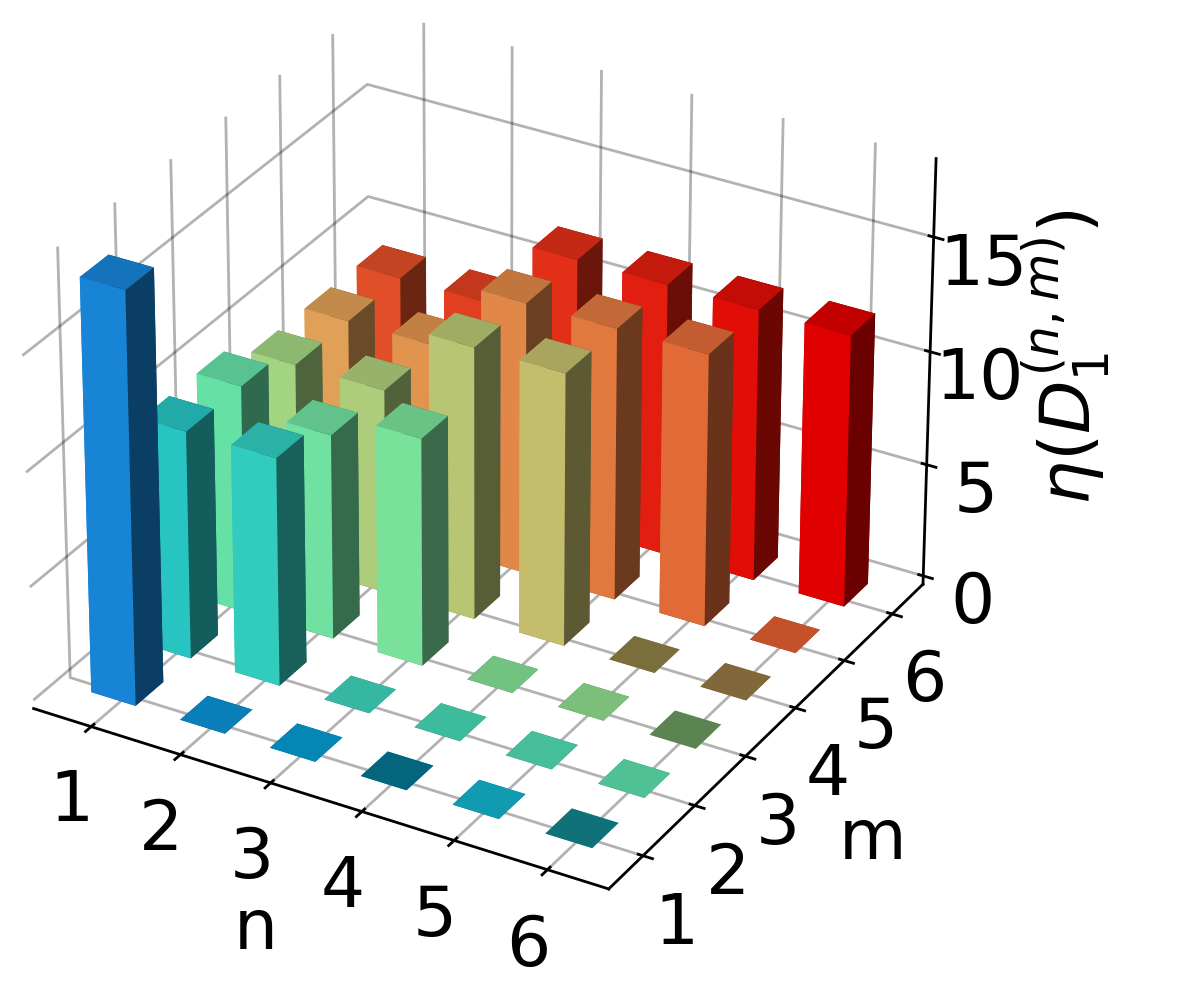}
						\caption{}
						\label{application:3:Features:b}
					\end{subfigure}%
					\hspace{0.1cm}
					\begin{subfigure}{.32\textwidth}
						\centering
						\includegraphics[width=.99\linewidth]{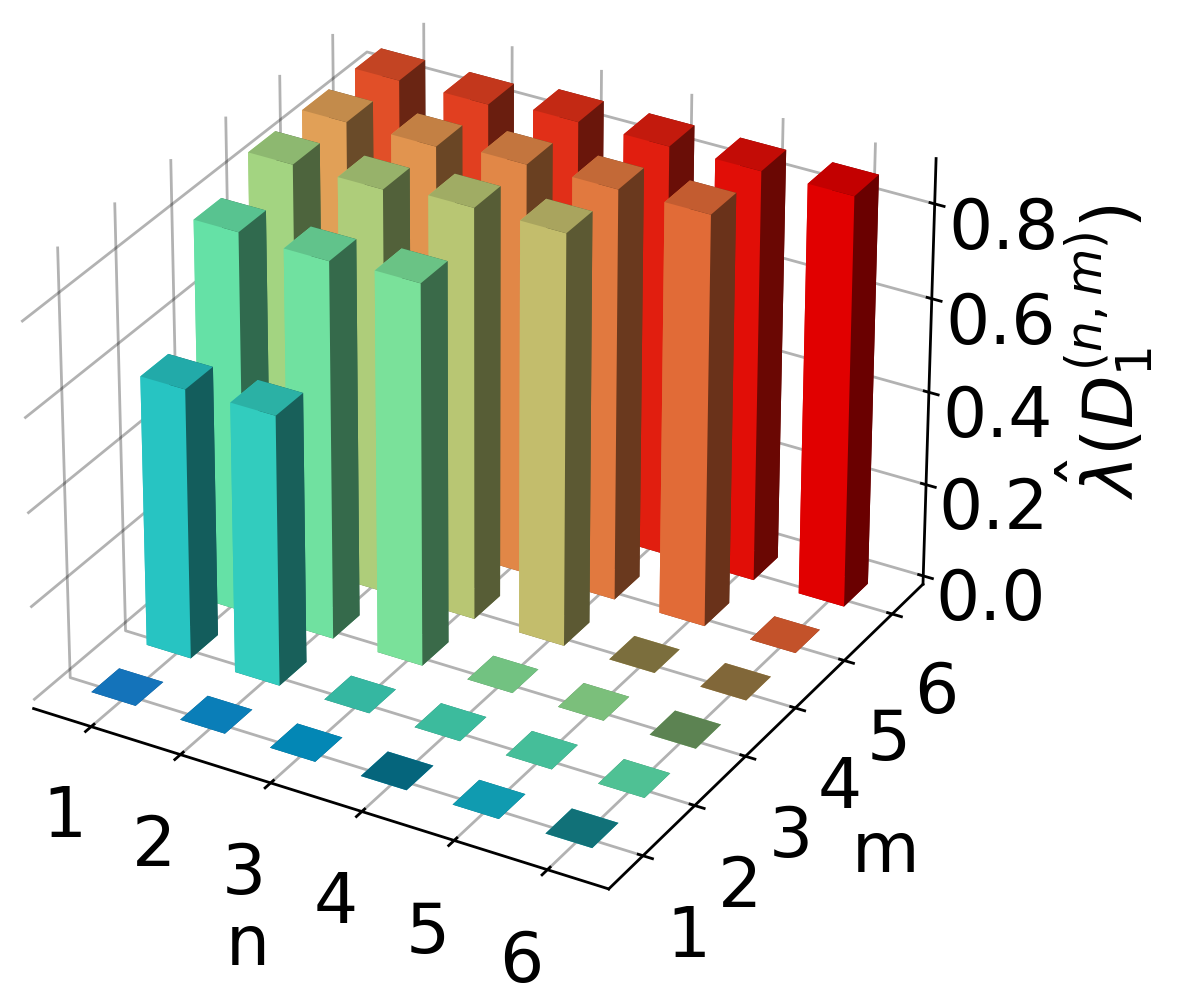}
						\caption{}
						\label{application:3:Features:c}
					\end{subfigure}
					
					\begin{subfigure}{.32\textwidth}
						\centering
						\includegraphics[width=.99\linewidth]{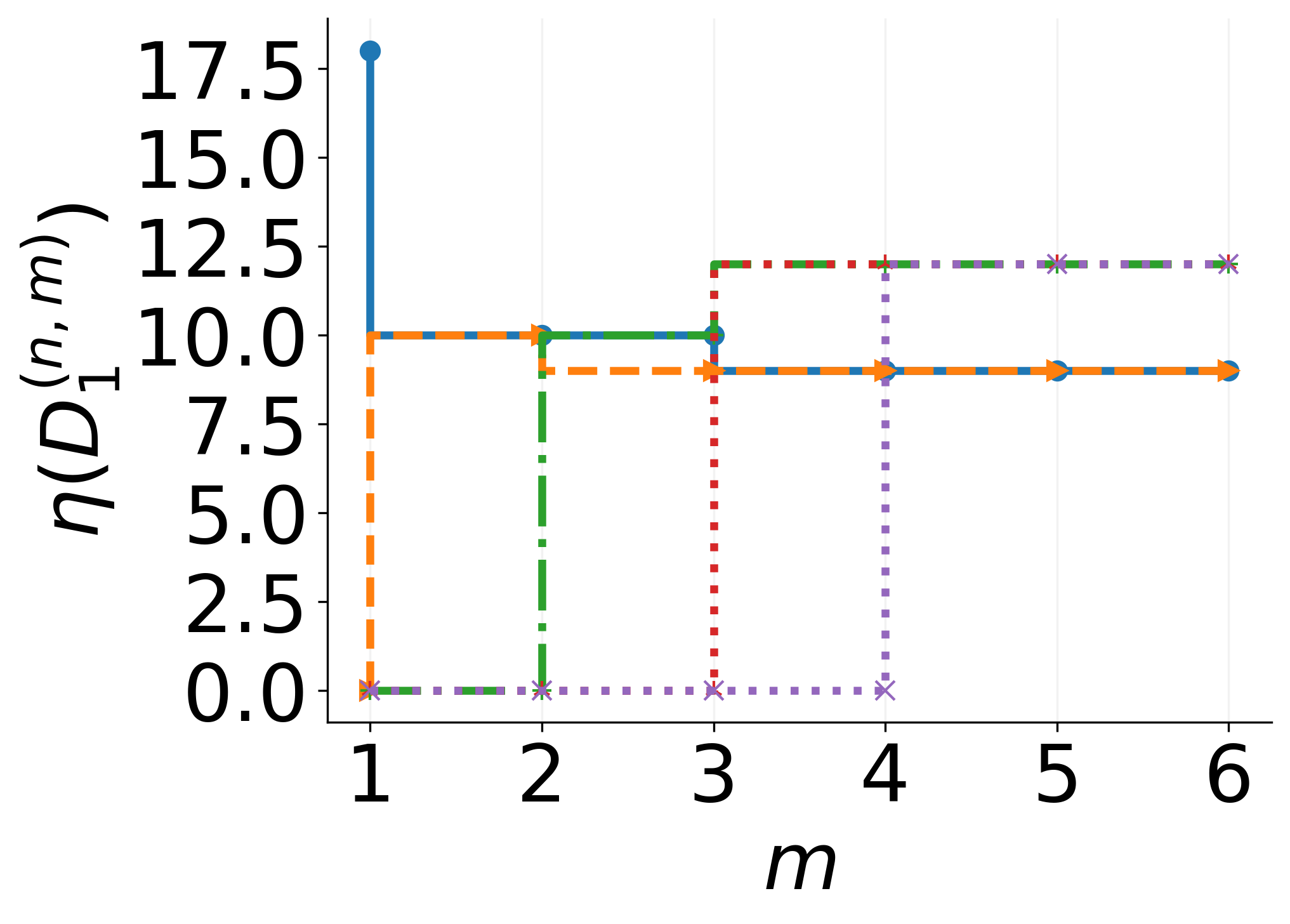}
						\caption{}
						\label{application:3:Features:e}
					\end{subfigure}%
					\hspace{0.1cm}
					\begin{subfigure}{.32\textwidth}
						\centering
						\includegraphics[width=.99\linewidth]{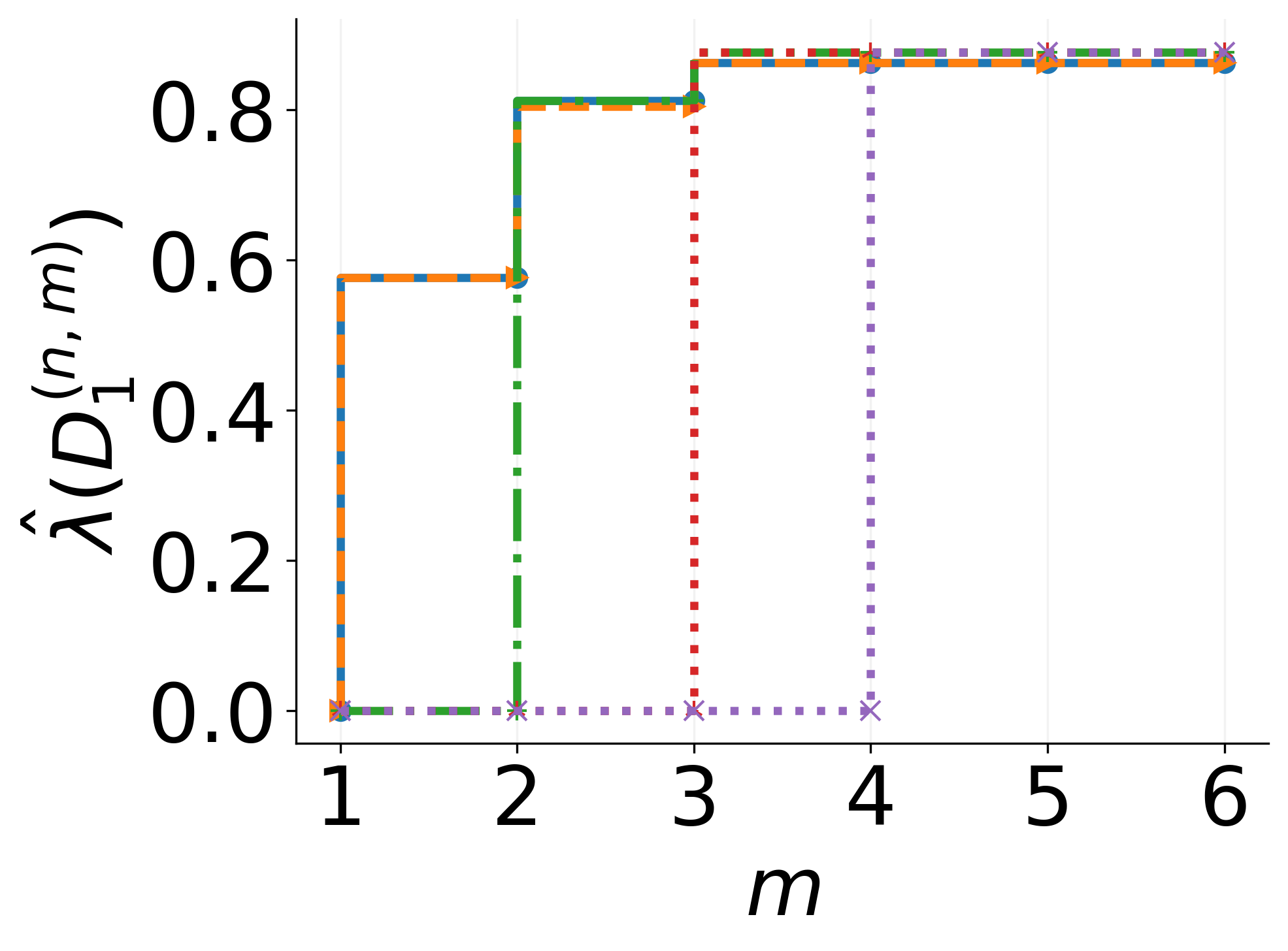}
						\caption{}
						\label{application:3:Features:f}
					\end{subfigure}
					
					\begin{subfigure}{.70\textwidth}
						\centering
						\includegraphics[width=.99\linewidth]{Legends}
					\end{subfigure}

					\caption{$\text{B}_7\text{C}_2\text{H}_9$ second isomer and its captured features.  {(A)} shows the obtained protein-ligand structure of the second isomer. {(B)} and {(C)} show $\eta(D_1^{(n,m)})$ and $\hat{\lambda}(D_1^{(n,m)})$ obtained from the second isomer. {(D)} and {(E)} show their projections on the $mz$-plane.}
					\label{application:3:Features}
				\end{figure}

				With four distinct distances at our disposal, our filtration encompasses five different digraphs, namely $H^1$, $H^2$, $H^3$, $H^4$, and $H^5$. The construction of these digraphs is based on varying filtration parameters. For instance, when the parameter is set to zero, leading to $H^1$, no connections exist between atoms, resulting in a wholly disconnected digraph. As the parameter increases to  0.97{\AA}\  and  1.1{\AA}\, $H^2$ emerges, where only hydrogen and oxygen atoms are connected, maintaining a disconnected digraph. Continuing this trend, $H^3$ emerges when the parameter falls between  1.1{\AA}\ and  1.43{\AA}\, encompassing connections between carbon and hydrogen atoms and progressively increasing the digraph's connectivity. Finally, $H^5$ represents a fully connected digraph when the parameter surpasses or equals  1.53{\AA}. The features extracted from these digraphs are visualized in Figures \ref{application:1:Features:a}-\ref{application:1:Features:c}.

				In line with the examples provided, it is evident that the nullities of persistent path Dirac operators convey significant information as anticipated. Moreover, in contrast to particular examples where the generalized mean displayed lower sensitivity than the arithmetic mean, the generalized mean for Glycogen exhibited a higher degree of data variability than the arithmetic mean. We will elucidate Figure \ref{application:1:Features:a}.  According to equations \ref{eq:1} and \ref{eqn:dirac},
				\begin{equation}
					\eta(D_1^{(n,m)}) = \beta^{(n,m)}_0+\beta^{(n,m)}_1 + \eta( \mathbf{d_2^C} ).
				\end{equation}

				In our specific case, similar to numerous other directed graph instances, we observe that $\beta^{(n,m)}_1$ and $\eta( \mathbf{d_2^C} )$ are zero, resulting in the equation:
				\begin{equation} \eta(D_1^{(n,m)}) =\beta^{(n,m)}_0=\beta_0^{m}. 
				\end{equation}
				This relationship holds for various value combinations for $n$ and $m$. Here, $\beta_0^{m}$ represents the dimension of the zeroth homology of $H^m$, which diminishes as more edges are introduced. This phenomenon elucidates a significant portion of the observed figure behavior. The peak at $(n,m)=(5,5)$ can also be accounted for. The elevated value in this instance results from adding additional loops due to the presence of carbon atoms, causing $\beta^{(n,m)}_1$ to deviate from zero and become substantial. This application underscores the capacity of persistent path Dirac operators to unravel the evolution of a molecule's shape, topological attributes, and persistence.

				In the second example of applying our analysis, we utilize two isomeric structures of $\text{B}_7\text{C}_2\text{H}_9$ and their complexes with proteins to showcase the power of persistent path Dirac operators in distinguishing between them. These structures were obtained using Schrodinger's Maestro software and are visualized in Figure \ref{application:2:Features:a} and \ref{application:3:Features:a}. To create a directed graph based on atom electronegativity, we establish directed edges following the sequence $\text{B}\rightarrow \text{H}\rightarrow \text{C}$. We also permit connections between two atoms of the same type but prohibit self-connections. We only include directed edges between atoms that share a chemical bond. Consequently, hydrogen atoms do not contribute to essential subgraphs that would generate path complexes of degrees higher than one. This simplifies the digraph when computing higher-degree components.
				The analysis reveals distinctive features captured by both isomers in Figures \ref{application:2:Features} and \ref{application:3:Features}. Notably, for large values of both $n$ and $m$, the first isomer exhibits higher nullity values than the second. This discrepancy is straightforward to explain since the first isomer forms a triangular graph, resulting in non-trivial higher-degree path complex components with more generators than the second isomer. For instance, while there is only one square-shaped graph like \ref{fig:1a}, the graph in the shape of \ref{fig:1e} lacks sufficient generators for higher-degree path complex components.

				\section{Concluding remarks}
				Path and hypergraph homologies and complexes are crucial mathematical tools for analyzing the complexities and inherent characteristics of hypergraphs and digraphs, enabling us to distinguish between these objects. Many physical entities and datasets can be endowed with innate hypergraphs and digraphs that arise from their intrinsic properties. Path and hypergraph complexes offer a valuable means to delve deeper into the intricacies of physical entities, such as molecules, and extract more comprehensive insights from associated datasets.
				
				This article presents path Dirac and hypergraph Dirac operators and persistent path Dirac and persistent hypergraph operators to extract information from path and hypergraph complexes. These operators can encompass harmonic and non-harmonic spectra and offer insights into subcomplexes within path and hypergraph complexes.
				First, we explore the homological aspects of path and hypergraph complexes and examine the fundamental subgraphs that generate higher-degree components, mainly focusing on the initial three. This analysis allows us to expedite path and hypergraph complex computations and better understand their construction. Furthermore, we demonstrate that these problems can be simplified by reducing them to standard graphs rather than dealing with complicated hypergraphs or, conversely, elevating graphs to hypergraphs with significantly fewer hyperedges. We also derive a formula for the first-degree Betti number, providing an upper bound for its value.
				
				Subsequently, we introduce persistent path Dirac and hypergraph Dirac operators and illustrate their behavior through various examples in diverse scenarios. We explore the sensitivity of these operators to filtrations, revealing their responsiveness to topological changes. Additionally, we corroborate the upper bound formula presented in Proposition \ref{prop:4} using illustrative examples.
				
				Finally, we investigate several applications of persistent path and hypergraph Dirac operators and persistent path and hypergraph homologies in biology, particularly in studying molecules. We introduce naturally induced strict preorders based on molecules, which, in turn, induce hypergraphs and digraphs featuring intricate path and hypergraph complexes. The richness of information within these path and hypergraph complexes corresponds to the complexity of distinct classes within the preorders, making them powerful tools for topological data analysis.
				
				Like homology and topological 	Laplacians, topological Diracs are a class of distinct mathematical formulations. Since persistent Laplacians have been shown to outperform persistent homology in a collection of 34 datasets in protein engineering \cite{qiu2023persistent}, persistent Diracs are expected to be powerful new tools for topological data analysis (TDA) as well.

				Table \ref{table:graph_Lap}  presents a list of (persistent) topological operators (i.e., homology, Laplacian, and Dirac) on various mathematical objects. 
				The persistent homologies and persistent Laplacians of a simplicial complex, digraph, directed flag,  path complex, 	hypergraph, and hyperdigraph have been constructed. 
				Persistent Dirac, persistent path Dirac, and persistent hypergraph Dirac have been constructed. A few more formulations, such as persistent Dirac of digraph, flag, and hyperdigraph, can be similarly developed. 
				
				\begin{table}[h!]
					\centering
					\scalebox{0.99}[1.09]{
						\begin{tabular}{c|c|c|c}
							\hline
							Topological Structure & Homologies  & Topological Laplacians & Topological Diracs \\
							\hline
							simplicial complex & simplicial homology  & Laplacian &  simplicial Dirac \\
							digraph  & homology of digraphs & digraph Laplacian & digraph Dirac**  \\
							directed flag  & directed flag homology & directed flag Laplacian &  directed flag Dirac** \\
							path complex  & path homology & path Laplacian & path Dirac*  \\
							hypergraph & homology of hypergraphs &hypergraph Laplacian &  hypergraph Dirac*\\
							hyperdigraph  & hyperdigraph homology & hyperdigraph Laplacian &  hyperdigraph Dirac** \\
							\hline
					\end{tabular}}
					\caption{A list of (persistent) topological operators (i.e., homology, Laplacian, and Dirac) on various mathematical objects, such as simplicial complex, digraph, directed flag,  path complex, 	hypergraph, and hyperdigraph. Starred objects are constructed in this article. Double-starred objects can be constructed similarly. }
					\label{table:graph_Lap}
				\end{table}

				In general, Dirac operators tend to have large matrix sizes, leading to a challenge in dealing with large data. Nonetheless, their distinctive structure, comprising blocks of zeroes and boundary matrices, allows for their representation through various specialized methods. Calculating these operators necessitates initially determining an orthonormal basis and identifying the matrices representing the boundary map. Additionally, constructing the complex necessary for persistent path Dirac operators involves identifying the inverse image. This process aligns with solving linear equations, for which effective techniques are available.


				\subsection*{Code Availability}
				
				The codes are available at
				
				\url{https://github.com/FaisalSuwayyid/persistent-path-hypergraph-dirac/}.
				\section*{Acknowledgments}
				This work was supported in part by NIH grants  R01GM126189, R01AI164266, and R01AI146210, NSF grants DMS-2052983,  DMS-1761320, and IIS-1900473,  NASA grant 80NSSC21M0023,  MSU Foundation,  Bristol-Myers Squibb 65109, and Pfizer.
				F.S. is grateful for financial support from King Fahd University of Petroleum and Minerals and thanks Dr Dong Chen for technical assistance.


				\bibliographystyle{unsrt}
				\bibliography{biblio}

\begin{thebibliography}{10}

\bibitem{kaczynski2004computational}
Tomasz Kaczynski, Konstantin~Michael Mischaikow, and Marian Mrozek.
\newblock {\em Computational homology}, volume~3.
\newblock Springer, 2004.

\bibitem{cang2017topologynet}
Zixuan Cang and Guo-Wei Wei.
\newblock {TopologyNet}: Topology based deep convolutional and multi-task
  neural networks for biomolecular property predictions.
\newblock {\em PLoS computational biology}, 13(7):e1005690, 2017.

\bibitem{grbic2022aspects}
Jelena Grbi{\'c}, Jie Wu, Kelin Xia, and Guo-Wei Wei.
\newblock Aspects of topological approaches for data science.
\newblock {\em Foundations of data science (Springfield, Mo.)}, 4(2):165, 2022.

\bibitem{chen2023persistent}
Dong Chen, Jian Liu, Jie Wu, and Guo-Wei Wei.
\newblock Persistent hyperdigraph homology and persistent hyperdigraph
  {Laplacians}.
\newblock {\em Foundation of Data Science}, 5:558--588, 2023.

\bibitem{zomorodian2004computing}
Afra Zomorodian and Gunnar Carlsson.
\newblock Computing persistent homology.
\newblock In {\em Proceedings of the twentieth annual symposium on
  Computational geometry}, pages 347--356, 2004.

\bibitem{edelsbrunner2008persistent}
Herbert Edelsbrunner, John Harer, et~al.
\newblock Persistent homology-a survey.
\newblock {\em Contemporary mathematics}, 453(26):257--282, 2008.

\bibitem{bubenik2017persistence}
Peter Bubenik and Pawe{\l} D{\l}otko.
\newblock A persistence landscapes toolbox for topological statistics.
\newblock {\em Journal of Symbolic Computation}, 78:91--114, 2017.

\bibitem{dodziuk1977rham}
Jozef Dodziuk.
\newblock {De Rham-Hodge} theory for l 2-cohomology of infinite coverings.
\newblock {\em Topology}, 16(2):157--165, 1977.

\bibitem{chen2021evolutionary}
Jiahui Chen, Rundong Zhao, Yiying Tong, and Guo-Wei Wei.
\newblock Evolutionary {de Rham-Hodge} method.
\newblock {\em Discrete and continuous dynamical systems. Series B},
  26(7):3785, 2021.

\bibitem{wang2020persistent}
Rui Wang, Duc~Duy Nguyen, and Guo-Wei Wei.
\newblock Persistent spectral graph.
\newblock {\em International journal for numerical methods in biomedical
  engineering}, 36(9):e3376, 2020.

\bibitem{memoli2022persistent}
Facundo M{\'e}moli, Zhengchao Wan, and Yusu Wang.
\newblock {Persistent Laplacians}: Properties, algorithms and implications.
\newblock {\em SIAM Journal on Mathematics of Data Science}, 4(2):858--884,
  2022.

\bibitem{qiu2023persistent}
Yuchi Qiu and Guo-Wei Wei.
\newblock Persistent spectral theory-guided protein engineering.
\newblock {\em Nature Computational Science}, 3(2):149--163, 2023.

\bibitem{chen2022persistent}
Jiahui Chen, Yuchi Qiu, Rui Wang, and Guo-Wei Wei.
\newblock Persistent {Laplacian projected Omicron BA.4 and BA.5} to become new
  dominating variants.
\newblock {\em Computers in Biology and Medicine}, 151:106262, 2022.

\bibitem{meng2021persistent}
Zhenyu Meng and Kelin Xia.
\newblock Persistent spectral--based machine learning {(PerSpect ML)} for
  protein-ligand binding affinity prediction.
\newblock {\em Science advances}, 7(19):eabc5329, 2021.

\bibitem{wang2021hermes}
Rui Wang, Rundong Zhao, Emily Ribando-Gros, Jiahui Chen, Yiying Tong, and
  Guo-Wei Wei.
\newblock {HERMES:} persistent spectral graph software.
\newblock {\em Foundations of data science (Springfield, Mo.)}, 3(1):67, 2021.

\bibitem{wei2021persistent}
Xiaoqi Wei and Guo-Wei Wei.
\newblock Persistent sheaf {Laplacians}.
\newblock {\em arXiv preprint arXiv:2112.10906}, 2021.

\bibitem{shepard1985cellular}
Allen~Dudley Shepard.
\newblock {\em A cellular description of the derived category of a stratified
  space}.
\newblock Brown University, 1985.

\bibitem{hansen2019toward}
Jakob Hansen and Robert Ghrist.
\newblock Toward a spectral theory of cellular sheaves.
\newblock {\em Journal of Applied and Computational Topology}, 3:315--358,
  2019.

\bibitem{grigor2012homologies}
Alexander Grigor'yan, Yong Lin, Yuri Muranov, and Shing-Tung Yau.
\newblock Homologies of path complexes and digraphs.
\newblock {\em arXiv preprint arXiv:1207.2834}, 2012.

\bibitem{wang2022persistent}
Rui Wang and Guo-Wei Wei.
\newblock Persistent path laplacian.
\newblock {\em Foundations of Data Science}, 5:26--55, 2023.

\bibitem{ameneyro2022quantums}
Bernardo Ameneyro, Vasileios Maroulas, and George Siopsis.
\newblock Quantum persistent homology.
\newblock {\em arXiv preprint arXiv:2202.12965}, 2022.

\bibitem{ameneyro2022quantum}
Bernardo Ameneyro, George Siopsis, and Vasileios Maroulas.
\newblock Quantum persistent homology for time series.
\newblock In {\em 2022 IEEE/ACM 7th Symposium on Edge Computing (SEC)}, pages
  387--392. IEEE, 2022.

\bibitem{calmon2022dirac}
Lucille Calmon, Juan~G Restrepo, Joaqu{\'\i}n~J Torres, and Ginestra Bianconi.
\newblock Dirac synchronization is rhythmic and explosive.
\newblock {\em Communications Physics}, 5(1):253, 2022.

\bibitem{calmon2023dirac}
Lucille Calmon, Michael~T Schaub, and Ginestra Bianconi.
\newblock Dirac signal processing of higher-order topological signals.
\newblock {\em arXiv preprint arXiv:2301.10137}, 2023.

\bibitem{wee2023persistent}
JunJie Wee, Ginestra Bianconi, and Kelin Xia.
\newblock {Persistent Dirac} for molecular representation.
\newblock {\em arXiv preprint arXiv:2302.02386}, 2023.

\bibitem{bressan2016embedded}
Stephane Bressan, Jingyan Li, Shiquan Ren, and Jie Wu.
\newblock The embedded homology of hypergraphs and applications.
\newblock {\em arXiv preprint arXiv:1610.00890}, 2016.

\bibitem{grigor2020path}
Alexander~A Grigor’yan, Yong Lin, Yu~V Muranov, and Shing-Tung Yau.
\newblock Path complexes and their homologies.
\newblock {\em Journal of Mathematical Sciences}, 248:564--599, 2020.

\bibitem{consonni2008new}
Viviana Consonni and Roberto Todeschini.
\newblock New spectral indices for molecule description.
\newblock {\em Match}, 1:2, 2008.

\bibitem{alhevaz2020generalized}
Abdollah Alhevaz, Maryam Baghipur, Hilal~A Ganie, and Yilun Shang.
\newblock On the generalized distance energy of graphs.
\newblock {\em Mathematics}, 8(1):17, 2020.

\bibitem{nguyen2019agl}
Duc~Duy Nguyen and Guo-Wei Wei.
\newblock {AGL}-score: algebraic graph learning score for protein--ligand
  binding scoring, ranking, docking, and screening.
\newblock {\em Journal of chemical information and modeling}, 59(7):3291--3304,
  2019.

\bibitem{pauling1932nature}
Linus Pauling.
\newblock The nature of the chemical bond. iv. the energy of single bonds and
  the relative electronegativity of atoms.
\newblock {\em Journal of the American Chemical Society}, 54(9):3570--3582,
  1932.

\bibitem{MolView}
Herman Bergwerf.
\newblock Glycogen.
\newblock \url{https://molview.org/}.

\end{thebibliography}
				
				\medskip
			\end{document}